\newtheorem{thm}{Theorem}[section]
\newtheorem{cor}[thm]{Corollary}
\newtheorem{prop}[thm]{Proposition}
\newtheorem{defn}[thm]{Definition}
\newtheorem{ex}[thm]{Example}
\numberwithin{equation}{section}
\begin{document}

\title{\bf Affine connections on singular warped products}
\author{Yong Wang}

\thanks{{\scriptsize
\hskip -0.4 true cm \textit{2010 Mathematics Subject Classification:}
53C40; 53C42.
\newline \textit{Key words and phrases:} Singular warped products; semi-symmetric metric Koszul forms; semi-symmetric non-metric Koszul forms; singular product semi-Riemannian manifolds; Koszul forms associated to the almost product structure; singular multiply warped products }}

\maketitle

\begin{abstract}
 In this paper, we introduce semi-symmetric metric Koszul forms and semi-symmetric non-metric Koszul forms on singular semi-Riemannian manifolds.
 Semi-symmetric metric Koszul forms and semi-symmetric non-metric Koszul forms and their curvature of semi-regular warped products are expressed in
 terms of those of the factor manifolds. We also introduce Koszul forms associated to the almost product structure on singular almost product semi-Riemannian manifolds. Koszul forms associated to the almost product structure and their curvature of semi-regular almost product warped products are expressed in terms of those of the factor manifolds. Furthermore, we generalize the results in \cite{St2} to singular multiply warped products.
\end{abstract}

\vskip 0.2 true cm


\pagestyle{myheadings}
\markboth{\rightline {\scriptsize Wang}}
         {\leftline{\scriptsize Affine connections on singular warped products}}

\bigskip
\bigskip


\section{ Introduction}

H. A. Hayden introduced the notion of a semi-symmetric metric connection on a
Riemannian manifold \cite{HA}. K. Yano studied a Riemannian manifold endowed with
a semi-symmetric metric connection \cite{Ya}. Some properties of a Riemannian manifold
and a hypersurface of a Riemannian manifold with a semi-symmetric metric
connection were studied by T. Imai \cite{I1,I2}. Z. Nakao \cite{NA} studied submanifolds of
a Riemannian manifold with semi-symmetric metric connections. In \cite{GE},  Gozutok and Esin studied the tangent bundle of a hypersurface with semi-symmetric metric connections. In \cite{De}, Demirbag investigated the properties of a weakly Ricci symmetric manifold admitting a semi-symmetric metric connection. N. S. Agashe and M. R. Chafle introduced the notion of a semisymmetric non-metric connection and
studied some of its properties and submanifolds of a Riemannian manifold with a
semi-symmetric non-metric connection \cite{AC1,AC2}. \\
\indent The warped product provides a way to construct new semi-Riemannian manifolds
from known ones \cite{BO}. This construction has useful applications in General
Relativity, in the study of cosmological models and black holes.
In \cite{DU}, using warped product spaces, Dobarro and Unal found
 Einstein manifolds and manifolds with constant scalar curvature. In \cite{SO,Wa1,Wa2}, authors found
Einstein manifolds and manifolds with constant scalar curvature with a semi-symmetric metric connection and a kind of semi-symmetric non-metric connections. In some models of warped products,
singularities are usually present, and at such points the warping function becomes
$0$. For the Friedmann-Lemaitre-Robertson-Walker model for example, the metric
of the product manifold becomes degenerate, and the Levi-Civita connection and
Riemann curvature, as usually defined, become singular or undefined. Therefore,
we need to apply the tools of singular geometry \cite{St1}.
In \cite{St2}, Stoica studied the geometry of warped product singularities. Motivated by above works, we introduce semi-symmetric metric Koszul forms and semi-symmetric non-metric Koszul forms on singular semi-Riemannian manifolds in this paper.
 Semi-symmetric metric Koszul forms and semi-symmetric non-metric Koszul forms and their curvature of semi-regular warped products are expressed in
 terms of those of the factor manifolds.
 In \cite{ES}, Etayo and Santamaria studied some affine connections on manifolds with the product structure.
 In particular, the canonical connection for a product structure was studied. In this paper, we introduce Koszul forms associated to the almost product structure on singular almost product semi-Riemannian manifolds. Koszul forms associated to the almost product structure and their curvature of semi-regular almost product warped products are expressed in terms of those of the factor manifolds. Multiply warped products are natural generalization of warped products.
In \cite{DU}, Dobarro and Unal studied Ricci-flat and Einstein-Lorentzian multiply warped products and considered the case of having constant scalar curvature for multiply warped products and applied their results to generalized Kasner space-times. In this paper, we generalize the results in \cite{St2} to singular multiply warped products.\\
\indent In Section 2, we introduce semi-symmetric metric Koszul forms on singular semi-Riemannian manifolds.
 Semi-symmetric metric Koszul forms and their curvature of semi-regular warped products are expressed in
 terms of those of the factor manifolds.
In Section 3, we introduce semi-symmetric non-metric Koszul forms on singular semi-Riemannian manifolds.
Semi-symmetric non-metric Koszul forms and their curvature of semi-regular warped products are expressed in
 terms of those of the factor manifolds.
In Section 4, we introduce Koszul forms associated to the almost product structure on singular almost product semi-Riemannian manifolds. Koszul forms associated to the almost product structure and their curvature of semi-regular almost product warped products are expressed in
 terms of those of the factor manifolds.
In Section 5, we generalize the results in \cite{St2} to singular multiply warped products.


\vskip 1 true cm

\section{Semi-symmetric metric Koszul forms and their curvature of semi-regular warped products }

Here we omit some notations and theorems in singular geometry, for details, see \cite{St1,St2}.
Let $(M,g)$ be a singular semi-Riemannian manifold (see Definition 2.1 in \cite{St2}). Let $P$ be a vector field on $M$. Let $\mathcal{K}$ be the Koszul form on $M$ defined as $\mathcal{K}:\Gamma(TM)^3\rightarrow C^{\infty}(M)$,
\begin{equation}
\mathcal{K}(X,Y,Z):=\frac{1}{2}\left\{X\left<Y,Z\right>+Y\left<Z,X\right>-Z\left<X,Y\right>-\left<X,[Y,Z]\right>
+\left<Y,[Z,X]\right>+\left<Z,[X,Y]\right>\right\}.
\end{equation}
\begin{defn}
Semi-symmetric metric Koszul forms $\overline{\mathcal{K}_P}:\Gamma(TM)^3\rightarrow C^{\infty}(M)$ on $(M,g)$ is defined as
\begin{equation}
\overline{\mathcal{K}_P}(X,Y,Z):=\mathcal{K}(X,Y,Z)+g(Y,P)g(X,Z)-g(X,Y)g(P,Z).
\end{equation}
Usually we write $\overline{\mathcal{K}}$ instead of $\overline{\mathcal{K}_P}$.
\end{defn}

\begin{thm}
Properties of the semi-symmetric metric Koszul form of a singular semi-Riemannian manifold $(M,g)$:\\
\indent (1) Additivity and $\mathcal{R}$-linearity in each of its arguments.\\
\indent  (2) $\overline{\mathcal{K}}(fX,Y,Z)=f\overline{\mathcal{K}}(X,Y,Z).$\\
\indent (3) $\overline{\mathcal{K}}(X,fY,Z)=f\overline{\mathcal{K}}(X,Y,Z)+X(f)\left<Y,Z\right>.$\\
\indent (4)$\overline{\mathcal{K}}(X,Y,fZ)=f\overline{\mathcal{K}}(X,Y,Z).$\\
\indent (5)$\overline{\mathcal{K}}(X,Y,Z)+\overline{\mathcal{K}}(X,Z,Y)=X\left<Y,Z\right>.$\\
\indent (6)$\overline{\mathcal{K}}(X,Y,Z)-\overline{\mathcal{K}}(Y,X,Z)=\left<[X,Y]+g(Y,P)X-g(X,P)Y,Z\right>$.\\
\indent (7)$\overline{\mathcal{K}}(X,Y,Z)+\overline{\mathcal{K}}(Z,Y,X)=(L_Yg)(Z,X)+2g(Y,P)g(X,Z)-g(X,Y)g(P,Z)-g(Y,Z)g(P,X)$.\\
\indent (8)$\overline{\mathcal{K}}(X,Y,Z)+\overline{\mathcal{K}}(Y,Z,X)=Y\left<Z,X\right>+\left<[X,Y],Z\right>+g(Y,P)g(X,Z)-g(Y,Z)g(P,X)$.\\
\end{thm}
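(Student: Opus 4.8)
The plan is to write $\overline{\mathcal{K}} = \mathcal{K} + C$, where $C(X,Y,Z) := g(Y,P)g(X,Z) - g(X,Y)g(P,Z)$ is the correction term introduced in (2.2), and to reduce each of the eight assertions to the corresponding property of the plain Koszul form $\mathcal{K}$ of (2.1) together with an elementary computation tracking $C$. The eight properties of $\mathcal{K}$ itself are standard for singular semi-Riemannian manifolds and may be taken from \cite{St2}; if one prefers, each follows from a direct expansion of (2.1). I would recall them in the forms I need, namely $\mathcal{K}(fX,Y,Z)=f\mathcal{K}(X,Y,Z)$, $\mathcal{K}(X,fY,Z)=f\mathcal{K}(X,Y,Z)+X(f)\langle Y,Z\rangle$, $\mathcal{K}(X,Y,fZ)=f\mathcal{K}(X,Y,Z)$, $\mathcal{K}(X,Y,Z)+\mathcal{K}(X,Z,Y)=X\langle Y,Z\rangle$, $\mathcal{K}(X,Y,Z)-\mathcal{K}(Y,X,Z)=\langle[X,Y],Z\rangle$, $\mathcal{K}(X,Y,Z)+\mathcal{K}(Z,Y,X)=(L_Yg)(Z,X)$, and $\mathcal{K}(X,Y,Z)+\mathcal{K}(Y,Z,X)=Y\langle Z,X\rangle+\langle[X,Y],Z\rangle$.

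For the tensoriality statements (1)--(4), the key point is that $C$ is built only from values of $g$, hence is $C^{\infty}(M)$-linear, with no derivative term, in each of its three arguments; in particular $C(fX,Y,Z)=C(X,fY,Z)=C(X,Y,fZ)=fC(X,Y,Z)$, and it is additive and $\mathbb{R}$-linear throughout. Adding these to the matching $\mathcal{K}$-identities yields (1), (2) and (4) directly, while in (3) the derivative term $X(f)\langle Y,Z\rangle$ comes entirely from $\mathcal{K}$, since $C$ contributes only $fC(X,Y,Z)$.

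For the symmetrization identities (5)--(8), I would compute the relevant combination of $C$ on its own and exploit the symmetry $g(U,V)=g(V,U)$. A short calculation gives $C(X,Y,Z)+C(X,Z,Y)=0$ and $C(X,Y,Z)-C(Y,X,Z)=g(Y,P)g(X,Z)-g(X,P)g(Y,Z)=\langle g(Y,P)X-g(X,P)Y,Z\rangle$, so that (5) and (6) follow upon adding the corresponding $\mathcal{K}$-identities. Likewise one finds $C(X,Y,Z)+C(Z,Y,X)=2g(Y,P)g(X,Z)-g(X,Y)g(P,Z)-g(Y,Z)g(P,X)$ and $C(X,Y,Z)+C(Y,Z,X)=g(Y,P)g(X,Z)-g(Y,Z)g(P,X)$, where in each case the remaining pair of correction terms cancels by symmetry of $g$; combined with the $\mathcal{K}$-identities these give (7) and (8). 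The only real work is this bookkeeping of $C$ under the various permutations, the mild subtlety being to see which pairs of correction terms cancel and which survive, so I expect (7) and (8), where the surviving terms must be matched exactly against the stated right-hand sides, to be the most error-prone, though none of the steps is deep.
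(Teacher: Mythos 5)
Your proposal is correct and follows essentially the same route as the paper: both decompose $\overline{\mathcal{K}}$ as the Koszul form $\mathcal{K}$ plus the correction term $g(Y,P)g(X,Z)-g(X,Y)g(P,Z)$, invoke the known properties of $\mathcal{K}$ (Theorem 2.3 in \cite{St2}), and track how the correction terms cancel or survive under each permutation using the symmetry of $g$. The only difference is presentational: the paper writes out only (5) and (6) and declares the rest similar, whereas you record the correction-term bookkeeping for all eight identities, which is slightly more thorough but mathematically identical.
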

\begin{proof}
By Definition 2.1 and Theorem 2.3 in \cite{St2}, we can get this theorem. We prove (5) and (6) and other properties hold similarly.
\begin{align}
&\overline{\mathcal{K}}(X,Y,Z)+\overline{\mathcal{K}}(X,Z,Y)\\\notag
=&\mathcal{K}(X,Y,Z)+g(Y,P)g(X,Z)-g(X,Y)g(P,Z)\\\notag
&+\mathcal{K}(X,Z,Y)+g(Z,P)g(X,Y)-g(X,Z)g(P,Y)\\\notag
=&\mathcal{K}(X,Y,Z)+\mathcal{K}(X,Z,Y)\\\notag
=&X\left<Y,Z\right>.
\notag
\end{align}
\begin{align}
&\overline{\mathcal{K}}(X,Y,Z)-\overline{\mathcal{K}}(Y,X,Z)\\\notag
=&\mathcal{K}(X,Y,Z)+g(Y,P)g(X,Z)-g(X,Y)g(P,Z)\\\notag
&-\mathcal{K}(Y,X,Z)-g(X,P)g(Y,Z)+g(X,Y)g(P,Z)\\\notag
=&\left<[X,Y]+g(Y,P)X-g(X,P)Y,Z\right>.\notag
\end{align}
\end{proof}
\begin{defn}
Let $X,Y\in\Gamma(TM).$ The semi-symmetric metric lower covariant derivative of $Y$ in the direction of $X$ as the differential $1$-form
$\overline{\nabla}^{\flat}_XY\in A^1(M)$
\begin{equation}
\overline{\nabla}^{\flat}_XY(Z):=\overline{\mathcal{K}}(X,Y,Z),
\end{equation}
for any $Z\in \Gamma(TM)$.
\end{defn}
Let $\Gamma_0(TM)=\{X\in \Gamma(TM)|g(X,Y)=0, {\rm for ~any }~ Y\in \Gamma(TM)\}.$ Similarly to Corollary 5.6 in \cite{St1}, we use (5) and (6) in Theorem 2.2 and get
\begin{cor} If $X,Y\in \Gamma(TM)$ and $W\in \Gamma_0(TM)$, then
\begin{equation}
\overline{\mathcal{K}}(X,Y,W)=\overline{\mathcal{K}}(Y,X,W)=-\overline{\mathcal{K}}(X,W,Y)=-\overline{\mathcal{K}}(Y,W,X).
\end{equation}
\end{cor}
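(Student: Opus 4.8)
The plan is to read off all three equalities directly from properties (5) and (6) of Theorem 2.2, exploiting the single defining feature of $\Gamma_0(TM)$: if $W\in\Gamma_0(TM)$ then $g(V,W)=\langle V,W\rangle=0$ for \emph{every} $V\in\Gamma(TM)$. The whole content of the corollary is that each correction term distinguishing $\overline{\mathcal{K}}$ from the plain Koszul form $\mathcal{K}$ is paired against $W$ through the metric, and therefore dies.

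First I would prove the symmetry $\overline{\mathcal{K}}(X,Y,W)=\overline{\mathcal{K}}(Y,X,W)$ by specializing (6) to $Z=W$. Its right-hand side becomes $\langle[X,Y]+g(Y,P)X-g(X,P)Y,\,W\rangle$, and since $W$ is orthogonal to every vector field, all three inner products vanish, forcing the difference to be zero. The point worth flagging is that the semi-symmetric correction $g(Y,P)X-g(X,P)Y$ causes no trouble precisely because it appears inside an inner product against $W$.

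Next I would establish the antisymmetry in the last two slots, namely $\overline{\mathcal{K}}(A,B,W)=-\overline{\mathcal{K}}(A,W,B)$ for arbitrary $A,B\in\Gamma(TM)$. This comes from (5) with $Z=W$: the left side is $\overline{\mathcal{K}}(A,B,W)+\overline{\mathcal{K}}(A,W,B)$ while the right side is $A\langle B,W\rangle$. Because $\langle B,W\rangle$ is the identically zero function on $M$, its derivative $A\langle B,W\rangle$ vanishes, and the sign flip follows.

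Finally I would assemble the chain: applying the antisymmetry with $(A,B)=(X,Y)$ gives $\overline{\mathcal{K}}(X,Y,W)=-\overline{\mathcal{K}}(X,W,Y)$, applying it with $(A,B)=(Y,X)$ gives $\overline{\mathcal{K}}(Y,X,W)=-\overline{\mathcal{K}}(Y,W,X)$, and combining these with the symmetry of the first step yields the full string of equalities. I do not expect a genuine obstacle here; the argument is formally identical to the Levi-Civita case (Corollary 5.6 in \cite{St1}), and the only step requiring a moment's care is recognizing that $X\langle Y,W\rangle=0$ holds as the derivative of the zero function rather than merely pointwise.
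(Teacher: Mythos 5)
Your proposal is correct and follows exactly the route the paper indicates: the paper's own proof is the one-line remark that the corollary follows from properties (5) and (6) of Theorem 2.2 (in analogy with Corollary 5.6 of \cite{St1}), which is precisely the specialization $Z=W$ that you carry out, including the key observation that $X\left<Y,W\right>$ vanishes as the derivative of the identically zero function. Your write-up simply supplies the details the paper leaves implicit.
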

Let $Y^\flat=g(Y,\cdot)$. By Theorem 2.2 and Definition 2.3, we get
\begin{prop}$\overline{\nabla}^{\flat}_XY$ has the following properties:\\
\indent (1) Additivity and $\mathcal{R}$-linearity in each of its arguments.\\
\indent  (2) $\overline{\nabla}^{\flat}_{fX}Y=f\overline{\nabla}^{\flat}_XY.$\\
\indent (3) $\overline{\nabla}^{\flat}_X(fY)=f\overline{\nabla}^{\flat}_XY+X(f)Y^\flat.$\\
\indent (4)$(\overline{\nabla}^{\flat}_{X}Y)(Z)+(\overline{\nabla}^{\flat}_{X}Z)(Y)=X\left<Y,Z\right>.$\\
\indent (5)$\overline{\nabla}^{\flat}_{X}Y-\overline{\nabla}^{\flat}_{Y}X=[X,Y]^\flat+g(Y,P)X^\flat-g(X,P)Y^\flat.$\\
\indent (6)$(\overline{\nabla}^{\flat}_{X}Y)(Z)+(\overline{\nabla}^{\flat}_{Z}Y)(X)=(L_Yg)(Z,X)+2g(Y,P)g(X,Z)-g(X,Y)g(P,Z)-g(Y,Z)g(P,X)$.\\
\indent (7)$(\overline{\nabla}^{\flat}_{X}Y)(Z)+(\overline{\nabla}^{\flat}_{Y}Z)(X)
=Y\left<Z,X\right>+\left<[X,Y],Z\right>+g(Y,P)g(X,Z)-g(Y,Z)g(P,X)$.\\
\end{prop}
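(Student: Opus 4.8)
The plan is to derive each clause directly from the corresponding clause of Theorem 2.2, using only the defining relation $\overline{\nabla}^{\flat}_XY(Z)=\overline{\mathcal{K}}(X,Y,Z)$ from Definition 2.3 together with the elementary observation that two elements of $A^1(M)$ are equal precisely when they agree on every $Z\in\Gamma(TM)$. Thus each asserted identity among $1$-forms is tested by fixing an arbitrary $Z$ and reducing to a scalar identity for the Koszul form that is already in hand. The one piece of bookkeeping needed throughout is the dictionary $Y^\flat(Z)=g(Y,\cdot)(Z)=g(Y,Z)=\langle Y,Z\rangle$, which converts every flat-index term on the right-hand sides back into an inner product.

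With this in place the correspondence is essentially a transcription. Clause (1) is immediate from Theorem 2.2(1), since for fixed $Z$ the slot-wise additivity and $\mathcal{R}$-linearity of $\overline{\mathcal{K}}$ pass to $\overline{\nabla}^{\flat}$. Clauses (2) and (3) follow by evaluating on $Z$ and applying Theorem 2.2(2) and (3) respectively; in (3) the extra term $X(f)\langle Y,Z\rangle$ is read off as $X(f)Y^\flat(Z)$ via the dictionary. Clauses (4), (6) and (7) are the verbatim evaluations of Theorem 2.2(5), (7) and (8). For clause (5) I would evaluate the left side as $\overline{\mathcal{K}}(X,Y,Z)-\overline{\mathcal{K}}(Y,X,Z)$, invoke Theorem 2.2(6) to obtain $\langle[X,Y]+g(Y,P)X-g(X,P)Y,Z\rangle$, and then split this single inner product into $[X,Y]^\flat(Z)+g(Y,P)X^\flat(Z)-g(X,P)Y^\flat(Z)$, which is exactly the claimed $1$-form identity evaluated at $Z$.

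The calculations are routine, so the only point demanding care is conceptual rather than computational: because $(M,g)$ is a \emph{singular} semi-Riemannian manifold, the metric may be degenerate and indices cannot be raised freely. I would therefore check that every object appearing on a right-hand side is a genuine element of $A^1(M)$ — in particular that the flat maps $Y\mapsto Y^\flat$, $X\mapsto X^\flat$ and $[X,Y]\mapsto[X,Y]^\flat$ are always well defined (they are, since $\flat$ requires no inversion of $g$) and that multiplication by the $C^\infty(M)$-coefficients $g(Y,P)$ and $g(X,P)$ preserves $A^1(M)$. Once this is confirmed, the ``equal on every $Z$'' principle legitimately upgrades each scalar identity to the asserted equality of $1$-forms, and no step ever tacitly inverts the metric. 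This is the main thing I expect to have to be careful about; the remainder is a direct translation of Theorem 2.2 through Definition 2.3.
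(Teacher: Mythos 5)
Your proposal is correct and follows exactly the route the paper takes: the paper's entire proof is the remark ``By Theorem 2.2 and Definition 2.3, we get,'' i.e.\ each clause of the proposition is the corresponding clause of Theorem 2.2 read through the definition $\overline{\nabla}^{\flat}_XY(Z)=\overline{\mathcal{K}}(X,Y,Z)$, with the dictionary $Y^\flat(Z)=\left<Y,Z\right>$ handling the flat-index terms. Your added care about the flat map being well defined without inverting the degenerate metric is a sensible observation but does not change the argument.
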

By Definition 2.1 and 2.3, we have
\begin{equation}
\overline{\nabla}^{\flat}_XY={\nabla}^{\flat}_XY+g(Y,P)X^\flat -g(X,Y)P^\flat.
\end{equation}
where $
{\nabla}^{\flat}_XY(Z):={\mathcal{K}}(X,Y,Z).$
We recall a singular manifold $(M,g)$ is radical-stationary if it satisfies the condition ${\nabla}^{\flat}_XY\in{\mathcal{A}}^{\bullet}(M)$
(for the definition of ${\mathcal{A}}^{\bullet}(M)$, see (3) in page 3 in \cite{St2}). By (2.7), we have a singular manifold $(M,g)$ is radical-stationary if and only if $\overline{\nabla}^{\flat}_XY\in{\mathcal{A}}^{\bullet}(M)$. By Corollary 2.4, if a singular manifold $(M,g)$ is radical-stationary, then for $X,Y\in \Gamma(TM)$ and $W\in \Gamma_0(TM)$, we have
\begin{equation}
\overline{\mathcal{K}}(X,Y,W)=\overline{\mathcal{K}}(Y,X,W)=-\overline{\mathcal{K}}(X,W,Y)=-\overline{\mathcal{K}}(Y,W,X)=0.
\end{equation}
\begin{defn}
Let $X\in\Gamma(TM)$, $\omega\in {\mathcal{A}}^{\bullet}(M)$, where $(M,g)$ is radical-stationary. The covariant derivative and the semi-symmetric metric covariant derivative of $\omega$ in
 the direction $X$ is defined as
 \begin{equation}{\nabla}:\Gamma(TM)\times {\mathcal{A}}^{\bullet}(M)\rightarrow A^1_d(M),~
( {\nabla}_X\omega)(Y):=X(\omega(Y))-\left<\left<{\nabla}^{\flat}_XY,\omega\right>\right>_\bullet,
\end{equation}
 \begin{equation}\overline{\nabla}:\Gamma(TM)\times {\mathcal{A}}^{\bullet}(M)\rightarrow A^1_d(M),~
( \overline{\nabla}_X\omega)(Y):=X(\omega(Y))-\left<\left<\overline{\nabla}^{\flat}_XY,\omega\right>\right>_\bullet,
\end{equation}
where for the definition of $\left<\left<,\right>\right>_\bullet$, see page 3 in \cite{St2} and $A^1_d(M)$ denotes the set of $1$-form which are smooth on the regions of constant signature.
\end{defn}
By (2.7),(2.9) and (2.10), we have
 \begin{equation}
\overline{\nabla}_X\omega={\nabla}_X\omega-\omega(X)P^\flat+\omega(P)X^\flat.
\end{equation}
Similarly to Theorem 6.13 in \cite{St1}, we have
\begin{prop}The semi-symmetric metric covariant derivative $\overline{\nabla}$ has the following properties:\\
\indent (1) Additivity and $\mathcal{R}$-linearity in each of its arguments.\\
\indent  (2) $\overline{\nabla}_{fX}\omega=f\overline{\nabla}_{X}\omega.$\\
\indent (3) $\overline{\nabla}_X(f\omega)=f\overline{\nabla}_X\omega+X(f)\omega.$\\
\indent (4)$\overline{\nabla}_{X}Y^\flat=\overline{\nabla}^\flat_{X}Y.$\\
\end{prop}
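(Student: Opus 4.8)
The plan is to reduce every assertion to the corresponding property of the ordinary covariant derivative $\nabla$, which holds by analogy with Theorem 6.13 in \cite{St1}, and then to check that the two correction terms in the decomposition (2.11) cooperate. Thus the single tool used throughout is (2.11), namely $\overline{\nabla}_X\omega={\nabla}_X\omega-\omega(X)P^\flat+\omega(P)X^\flat$: each identity for $\overline{\nabla}$ is obtained by adding to the corresponding identity for $\nabla$ the contribution of the correction $-\omega(X)P^\flat+\omega(P)X^\flat$, which is itself visibly additive and $\mathcal{R}$-linear in each of $X$ and $\omega$.

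First I would dispose of (1), (2) and (3), all of which are immediate from (2.11). For (1), both $\nabla_X\omega$ and the correction are additive and $\mathcal{R}$-linear in $X$ and in $\omega$, hence so is their sum. For (2), I substitute $fX$ and use $\nabla_{fX}\omega=f\nabla_X\omega$ together with $\omega(fX)=f\omega(X)$ and $(fX)^\flat=fX^\flat$; every term then carries a factor $f$, giving $\overline{\nabla}_{fX}\omega=f\overline{\nabla}_X\omega$. For (3), I substitute $f\omega$ and use $\nabla_X(f\omega)=f\nabla_X\omega+X(f)\omega$ with $(f\omega)(X)=f\omega(X)$ and $(f\omega)(P)=f\omega(P)$; the two correction terms scale by $f$ while the Leibniz term $X(f)\omega$ is untouched, which is exactly the claimed formula.

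The one step requiring a genuine comparison, and the main point of the proposition, is (4). Here I would set $\omega=Y^\flat$ in (2.11), invoke the identity $\nabla_X Y^\flat=\nabla^\flat_X Y$ (the $\nabla$-analogue of (4), valid by Theorem 6.13 in \cite{St1}), and evaluate $Y^\flat(X)=g(Y,X)$ and $Y^\flat(P)=g(Y,P)$. This yields
\[
\overline{\nabla}_X Y^\flat=\nabla^\flat_X Y-g(X,Y)P^\flat+g(Y,P)X^\flat,
\]
which is precisely the right-hand side of (2.7) defining $\overline{\nabla}^\flat_X Y$, so $\overline{\nabla}_X Y^\flat=\overline{\nabla}^\flat_X Y$. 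The only subtlety is to confirm that all the objects involved genuinely lie in $A^1_d(M)$ and that the pairing $\left<\left<\cdot,\cdot\right>\right>_\bullet$ entering the definitions (2.9) and (2.10) is well defined; but this is already guaranteed by the radical-stationary hypothesis standing in Definition 2.6, so no new analytic input is needed and the verification is purely algebraic.
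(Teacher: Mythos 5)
Your proof is correct, but it follows a genuinely different route from the one the paper intends. The paper gives no written argument at all: Proposition 2.7 is prefaced only by ``Similarly to Theorem 6.13 in \cite{St1}'', i.e.\ the intended proof is a direct adaptation of Stoica's verification, working from the definition (2.10) and the properties of $\overline{\mathcal{K}}$ in Theorem 2.2. For instance, in that approach item (4) is a one-line consequence of Theorem 2.2(5): $(\overline{\nabla}_X Y^\flat)(Z)=X\left<Y,Z\right>-\left<\left<\overline{\nabla}^{\flat}_XZ,Y^\flat\right>\right>_\bullet=X\left<Y,Z\right>-\overline{\mathcal{K}}(X,Z,Y)=\overline{\mathcal{K}}(X,Y,Z)=(\overline{\nabla}^{\flat}_XY)(Z)$. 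You instead transfer everything from $\nabla$ to $\overline{\nabla}$ through the decomposition (2.11), treating the corresponding properties of $\nabla$ from Theorem 6.13 in \cite{St1} as a black box. Both arguments are valid, and each buys something: your reduction isolates the correction $-\omega(X)P^\flat+\omega(P)X^\flat$ once and for all, which makes (1)--(3) trivial, and your computation for (4) correctly lands on the right-hand side of (2.7), so the identification $\overline{\nabla}_XY^\flat=\overline{\nabla}^\flat_XY$ is immediate; the direct route, by contrast, stays entirely inside the ``barred'' formalism and does not lean on (2.11), whose own derivation silently uses the pairing identity $\left<\left<X^\flat,\omega\right>\right>_\bullet=\omega(X)$ for $\omega\in{\mathcal{A}}^{\bullet}(M)$. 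Since (2.11) is established in the paper before Proposition 2.7, your reliance on it is legitimate, and your closing remark that the radical-stationary hypothesis is what keeps all objects in the correct spaces is exactly the right caveat.
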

By (2.7) and (2.11) and Proposition 2.7, for a radical-stationary manifold (M,g), we have
 \begin{align}
&\overline{\nabla}_X(\overline{\nabla}^\flat_YZ)={\nabla}_X({\nabla}^\flat_YZ)+X(g(Y,Z))Y^\flat+g(Z,P){\mathcal{K}}(X,Y,\bullet)\\\notag
 &-X(g(Y,Z))P^\flat-g(Y,Z){\mathcal{K}}(X,P,\bullet)-\overline{\mathcal{K}}(Y,Z,X)P^\flat+\overline{\mathcal{K}}(Y,Z,P)X^\flat.\notag
\end{align}
 We recall
 \begin{defn}
A singular semi-Riemannian manifold $(M,g)$ satisfying $\nabla^\flat_XY\in {\mathcal{A}}^{\bullet}(M)$ and $\nabla_Z(\nabla^\flat_XY)\in {\mathcal{A}}^{\bullet}(M)$ for any $X,Y,Z\in\Gamma(TM)$ is called a semi-regular semi-Riemannian manifold.
\end{defn}
 \begin{defn}
A singular semi-Riemannian manifold $(M,g)$ satisfying $\overline{\nabla}^\flat_XY\in {\mathcal{A}}^{\bullet}(M)$ and $\overline{\nabla}_Z(\overline{\nabla}^\flat_XY)\in {\mathcal{A}}^{\bullet}(M)$ for any $X,Y,Z\in\Gamma(TM)$ is called a semi-symmetric metric  semi-regular semi-Riemannian manifold.
\end{defn}
 By (2.7) and (2.12), we have
 \begin{prop} A singular semi-Riemannian manifold $(M,g)$ is a semi-regular semi-Riemannian manifold if and only if it is a semi-symmetric metric semi-regular semi-Riemannian manifold.
\end{prop}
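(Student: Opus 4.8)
The plan is to reduce the whole statement to the two displayed identities (2.7) and (2.12) and then to observe that, in each of them, the semi-symmetric metric objects differ from the Levi-Civita ones only by correction terms built from $g$, $P$, and the musical images $X^\flat,P^\flat$, all of which lie in $\mathcal{A}^{\bullet}(M)$. Once this is granted, the two membership conditions of Definition 2.8 and Definition 2.9 match up term by term, and the equivalence follows.

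First I would dispose of the first clause. Formula (2.7) reads $\overline{\nabla}^{\flat}_XY=\nabla^{\flat}_XY+g(Y,P)X^\flat-g(X,Y)P^\flat$. Since $X^\flat=g(X,\cdot)$ and $P^\flat=g(P,\cdot)$ are metric contractions they belong to $\mathcal{A}^{\bullet}(M)$, and $\mathcal{A}^{\bullet}(M)$ is a $C^{\infty}(M)$-module, so the correction $g(Y,P)X^\flat-g(X,Y)P^\flat$ lies in $\mathcal{A}^{\bullet}(M)$ unconditionally. Hence $\nabla^{\flat}_XY\in\mathcal{A}^{\bullet}(M)$ for all $X,Y$ if and only if $\overline{\nabla}^{\flat}_XY\in\mathcal{A}^{\bullet}(M)$ for all $X,Y$; this is precisely the radical-stationarity equivalence already recorded after (2.7). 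In particular the first clause of either definition forces $(M,g)$ to be radical-stationary, which is exactly what is needed for the second covariant derivatives in (2.12) to be defined in both settings.

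Next I would treat the second clause using (2.12), which writes $\overline{\nabla}_X(\overline{\nabla}^\flat_YZ)$ as $\nabla_X(\nabla^\flat_YZ)$ plus six correction terms. Working under the radical-stationarity just established, I would check each term lands in $\mathcal{A}^{\bullet}(M)$: the terms $X(g(Y,Z))Y^\flat$, $-X(g(Y,Z))P^\flat$, $-\overline{\mathcal{K}}(Y,Z,X)P^\flat$ and $\overline{\mathcal{K}}(Y,Z,P)X^\flat$ are smooth functions times the musical forms $X^\flat,Y^\flat,P^\flat\in\mathcal{A}^{\bullet}(M)$, while $g(Z,P)\mathcal{K}(X,Y,\bullet)=g(Z,P)\nabla^{\flat}_XY$ and $-g(Y,Z)\mathcal{K}(X,P,\bullet)=-g(Y,Z)\nabla^{\flat}_XP$ are smooth functions times $\nabla^{\flat}$-derivatives, which lie in $\mathcal{A}^{\bullet}(M)$ precisely because $(M,g)$ is radical-stationary. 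As $\mathcal{A}^{\bullet}(M)$ is closed under multiplication by smooth functions, the full correction is in $\mathcal{A}^{\bullet}(M)$, so $\nabla_X(\nabla^\flat_YZ)\in\mathcal{A}^{\bullet}(M)$ if and only if $\overline{\nabla}_X(\overline{\nabla}^\flat_YZ)\in\mathcal{A}^{\bullet}(M)$.

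Combining the two equivalences yields the proposition, since then both clauses of Definition 2.8 hold exactly when the corresponding clauses of Definition 2.9 hold. I expect the only genuine subtlety — the hard part — to be the careful justification that every correction term really lands in $\mathcal{A}^{\bullet}(M)$, i.e. invoking from \cite{St1,St2} that $\mathcal{A}^{\bullet}(M)$ is a $C^{\infty}(M)$-module containing all musical images and, under radical-stationarity, all $\nabla^{\flat}$-derivatives. Once these structural facts are in hand, the result is immediate from reading off (2.7) and (2.12).
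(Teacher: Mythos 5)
Your proposal is correct and takes essentially the same route as the paper: the paper's entire proof of this proposition is the remark that it follows from (2.7) and (2.12), and your argument simply fills in the details that the paper leaves implicit, namely that each correction term is a smooth function times one of $X^\flat$, $Y^\flat$, $P^\flat$, or a $\nabla^\flat$-derivative, hence lies in $\mathcal{A}^{\bullet}(M)$ once radical-stationarity is available. Your observation that the first clause must be settled first (so that the second covariant derivatives in (2.12) are even defined) is the correct reading of the paper's terse argument.
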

We recall
on a semi-regular semi-Riemannian manifold, we define the Riemann curvature tensor of the
 covariant derivative
 \begin{equation}
{R}(X,Y,Z,T):=({\nabla}_X{\nabla}^\flat_YZ)(T)-({\nabla}_Y{\nabla}^\flat_XZ)(T)
-({\nabla}^\flat_{[X,Y]}Z)(T).
\end{equation}
 On a semi-symmetric metric semi-regular semi-Riemannian manifold, we define the Riemann curvature tensor of the semi-symmetric metric
 covariant derivative
 \begin{equation}
\overline{R}(X,Y,Z,T):=(\overline{\nabla}_X\overline{\nabla}^\flat_YZ)(T)-(\overline{\nabla}_Y\overline{\nabla}^\flat_XZ)(T)
-(\overline{\nabla}^\flat_{[X,Y]}Z)(T).
\end{equation}
 Then similarly to Theorem 7.5 in \cite{St1}, we have
 \begin{thm} Let $(M,g)$ be a semi-symmetric metric semi-regular semi-Riemannian manifold, then $\overline{R}(X,Y,Z,T)$ is a smooth $(0,4)$-tensor field.
 \end{thm}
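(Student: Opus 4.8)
The plan is to reduce the statement to the already-established smoothness theorem for the Levi-Civita curvature tensor $R(X,Y,Z,T)$ (Theorem 7.5 in \cite{St1}, which is invoked just before the statement), by expressing $\overline{R}$ as $R$ plus correction terms that are manifestly smooth. The key structural fact is the decomposition (2.7), $\overline{\nabla}^{\flat}_XY={\nabla}^{\flat}_XY+g(Y,P)X^\flat -g(X,Y)P^\flat$, together with its second-order analogue (2.12), and the identity (2.11) relating $\overline{\nabla}_X\omega$ to ${\nabla}_X\omega$. By Proposition 2.11, the semi-regularity hypothesis guarantees that all the objects entering (2.14) lie in the spaces where the constructions of \cite{St1,St2} are valid, so the expression $\overline{R}(X,Y,Z,T)$ is well defined and lies in $A^0_d(M)$; the entire content is to upgrade this to genuine smoothness across the degenerate locus.

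First I would substitute (2.12) and the corresponding expressions for $\overline{\nabla}_Y\overline{\nabla}^\flat_XZ$ and $\overline{\nabla}^\flat_{[X,Y]}Z$ into the definition (2.15). After collecting terms, the leading part reassembles precisely into the Levi-Civita curvature $R(X,Y,Z,T)$ from (2.14), which is smooth by Theorem 7.5 in \cite{St1}. The remaining terms are built out of the ingredients $g(Y,Z)$, $g(Z,P)$, $g(Y,P)$, the one-forms $X^\flat,Y^\flat,Z^\flat,P^\flat$, the directional derivatives such as $X(g(Y,Z))$, and the Levi-Civita Koszul form $\mathcal{K}(X,Y,\bullet)$ evaluated and contracted appropriately. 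Each of these building blocks is smooth: $g$ and its contractions against smooth vector fields are smooth by definition of a singular semi-Riemannian manifold, the flats $X^\flat=g(X,\cdot)$ are smooth sections of the appropriate dual bundle, and the terms involving $\mathcal{K}$ are controlled because $(M,g)$ is in particular semi-regular (Proposition 2.11), so $\nabla^\flat$ and its second covariant derivatives already produce smooth output. The algebra here is lengthy but routine; the goal is simply to display $\overline{R}=R+(\text{explicitly smooth tensorial correction})$.

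The main obstacle I anticipate is tensoriality: to conclude that $\overline{R}$ is a $(0,4)$-\emph{tensor} field (not merely a smooth function trilinear over $\mathcal{R}$), I must verify $C^\infty(M)$-linearity in each of the four slots, and this is where the first-derivative terms like $X(f)Y^\flat$ appearing in property (3) of Propositions 2.7 and 2.10 threaten to spoil linearity. The standard resolution is that the antisymmetrization built into (2.15)---the difference of the two double-derivative terms together with the $[X,Y]$ term---causes exactly these non-tensorial first-order contributions to cancel, just as in the classical computation that the Riemann tensor is tensorial despite $\nabla$ not being. I would check the $X$-slot and $Y$-slot linearity using the antisymmetry and the Leibniz-type rule (3), the $Z$-slot using properties (2)--(4) of Proposition 2.7 applied to $\overline{\nabla}^\flat$, and the $T$-slot directly since $T$ enters only through evaluation of one-forms. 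Because the correction terms separating $\overline{R}$ from $R$ are themselves already tensorial (they involve only $g$-contractions and flats, with no naked derivatives of the slot arguments), their tensoriality is immediate, and the tensoriality of $R$ is inherited from Theorem 7.5 in \cite{St1}; hence the cancellations needed are the classical ones and the verification, while requiring care, presents no new difficulty beyond bookkeeping.
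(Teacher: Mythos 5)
Your proposal is correct, but it takes a genuinely different route from the paper. The paper gives no computation at all for this theorem: it simply declares that the proof proceeds "similarly to Theorem 7.5 in \cite{St1}", i.e.\ one re-runs Stoica's direct argument with the barred objects, using the expression of $\overline{R}$ in terms of $\overline{\mathcal{K}}$ and the contractions $\left<\left<\overline{\nabla}^{\flat}_XZ,\overline{\nabla}^{\flat}_YT\right>\right>_\bullet$ (the paper's Proposition 2.12, stated just after the theorem), where each term is smooth by the semi-symmetric metric semi-regularity hypothesis and tensoriality is checked from the properties of $\overline{\mathcal{K}}$ in Theorem 2.2. You instead reduce to the unbarred case: by the equivalence of the two semi-regularity notions (the paper's Proposition 2.10, which you miscite as 2.11) the manifold is semi-regular, so $R$ is a smooth $(0,4)$-tensor by Theorem 7.5 of \cite{St1}; then you write $\overline{R}=R+(\text{corrections})$, which is exactly the paper's later Proposition 2.13, equation (2.17), and observe that every correction term is a product of $g$-contractions and Koszul forms $\mathcal{K}(\cdot,P,\cdot)$, $\overline{\mathcal{K}}(\cdot,P,\cdot)$ --- these are smooth on any singular semi-Riemannian manifold directly from the defining formula (2.1), and they are $C^{\infty}(M)$-linear in the slot arguments because the fixed field $P$ occupies the one slot (the middle one) where $C^{\infty}$-linearity fails. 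In effect you prove Proposition 2.13 first and obtain the theorem as a corollary, reversing the paper's order; this buys a cleaner argument in which smoothness and tensoriality of the corrections are immediate, whereas the paper's route stays parallel to Stoica's machinery and produces the intrinsic formulas (2.15)--(2.16) that it needs later anyway. One remark: your third paragraph's concern about cancellation of non-tensorial first-derivative terms is superfluous under your own reduction --- once tensoriality of $R$ is quoted from \cite{St1} and the corrections are seen to be tensorial (no derivatives land on the slot arguments), there is nothing left to cancel; the classical antisymmetrization argument is only needed inside the proof of the cited Theorem 7.5 itself.
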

Similarly to Proposition 8.1 in \cite{St1}, we have
\begin{prop} For any vector fields $X,Y,Z,T\in\Gamma(TM)$ on a semi-symmetric metric semi-regular semi-Riemannian manifold $(M,g)$
 \begin{align}
\overline{R}(X,Y,Z,T)&=X((\overline{\nabla}^\flat_YZ)(T))-Y((\overline{\nabla}^\flat_XZ)(T))-(\overline{\nabla}^\flat_{[X,Y]}Z)(T)\\\notag
&+\left<\left<\overline{\nabla}^{\flat}_XZ,\overline{\nabla}^{\flat}_YT\right>\right>_\bullet
-\left<\left<\overline{\nabla}^{\flat}_YZ,\overline{\nabla}^{\flat}_XT\right>\right>_\bullet.
\end{align}
 \begin{align}
\overline{R}(X,Y,Z,T)&=X(\overline{\mathcal{K}}(Y,Z,T))-Y(\overline{\mathcal{K}}(X,Z,T))-\overline{\mathcal{K}}([X,Y],Z,T)\\\notag
&+\overline{\mathcal{K}}(X,Z,\bullet)\overline{\mathcal{K}}(Y,T,\bullet)-\overline{\mathcal{K}}(Y,Z,\bullet)\overline{\mathcal{K}}(X,T,\bullet).
\end{align}
\end{prop}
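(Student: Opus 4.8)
The plan is to derive both displayed formulas directly from the definition (2.15) of $\overline{R}$, expanding the two outer covariant derivatives by means of the Leibniz-type rule (2.10) and then translating the outcome into the Koszul form via Definition 2.3. No new identities are needed beyond (2.10), the defining relation $(\overline{\nabla}^\flat_XY)(Z)=\overline{\mathcal{K}}(X,Y,Z)$, and the symmetry of the fibre pairing $\langle\langle\cdot,\cdot\rangle\rangle_\bullet$.

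First I would expand the leading term of (2.15). Since $(M,g)$ is semi-symmetric metric semi-regular, the lower covariant derivative $\overline{\nabla}^\flat_YZ$ lies in $\mathcal{A}^\bullet(M)$, so (2.10) applies with $\omega=\overline{\nabla}^\flat_YZ$ and gives
\[
(\overline{\nabla}_X\overline{\nabla}^\flat_YZ)(T)=X\bigl((\overline{\nabla}^\flat_YZ)(T)\bigr)-\langle\langle\overline{\nabla}^{\flat}_XT,\overline{\nabla}^{\flat}_YZ\rangle\rangle_\bullet,
\]
and symmetrically, interchanging the roles of $X$ and $Y$,
\[
(\overline{\nabla}_Y\overline{\nabla}^\flat_XZ)(T)=Y\bigl((\overline{\nabla}^\flat_XZ)(T)\bigr)-\langle\langle\overline{\nabla}^{\flat}_YT,\overline{\nabla}^{\flat}_XZ\rangle\rangle_\bullet.
\]
Substituting both expressions into (2.15) produces four ``derivative'' terms and two pairing terms.

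Next I would collect terms. The three derivative contributions $X\bigl((\overline{\nabla}^\flat_YZ)(T)\bigr)$, $-Y\bigl((\overline{\nabla}^\flat_XZ)(T)\bigr)$ and $-(\overline{\nabla}^\flat_{[X,Y]}Z)(T)$ are already the first line of (2.16). For the pairing terms, I would use the symmetry of $\langle\langle\cdot,\cdot\rangle\rangle_\bullet$ to write $\langle\langle\overline{\nabla}^{\flat}_XT,\overline{\nabla}^{\flat}_YZ\rangle\rangle_\bullet=\langle\langle\overline{\nabla}^{\flat}_YZ,\overline{\nabla}^{\flat}_XT\rangle\rangle_\bullet$ and $\langle\langle\overline{\nabla}^{\flat}_YT,\overline{\nabla}^{\flat}_XZ\rangle\rangle_\bullet=\langle\langle\overline{\nabla}^{\flat}_XZ,\overline{\nabla}^{\flat}_YT\rangle\rangle_\bullet$; tracking the signs coming from the two outer derivatives, the pairing terms combine into $\langle\langle\overline{\nabla}^{\flat}_XZ,\overline{\nabla}^{\flat}_YT\rangle\rangle_\bullet-\langle\langle\overline{\nabla}^{\flat}_YZ,\overline{\nabla}^{\flat}_XT\rangle\rangle_\bullet$, which is exactly the second line of (2.16). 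This establishes the first formula.

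Finally, to pass from (2.16) to (2.17) I would merely rewrite each lower covariant derivative in terms of the Koszul form through Definition 2.3, replacing $(\overline{\nabla}^\flat_YZ)(T)$ by $\overline{\mathcal{K}}(Y,Z,T)$ and $(\overline{\nabla}^\flat_{[X,Y]}Z)(T)$ by $\overline{\mathcal{K}}([X,Y],Z,T)$, and interpreting the bullet notation $\overline{\mathcal{K}}(X,Z,\bullet)$ as the $1$-form $\overline{\nabla}^\flat_XZ$, so that each pairing $\langle\langle\overline{\nabla}^{\flat}_XZ,\overline{\nabla}^{\flat}_YT\rangle\rangle_\bullet$ becomes the contraction $\overline{\mathcal{K}}(X,Z,\bullet)\overline{\mathcal{K}}(Y,T,\bullet)$. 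I expect no serious obstacle: the argument is a routine Leibniz expansion, and the only point demanding care is checking that every $1$-form entering the pairings genuinely belongs to $\mathcal{A}^\bullet(M)$ so that $\langle\langle\cdot,\cdot\rangle\rangle_\bullet$ is well defined and smooth — which is precisely what the semi-regularity hypothesis of Definition 2.10 (equivalently, Proposition 2.11) guarantees, and which is also what makes the earlier Theorem~2.12 applicable.
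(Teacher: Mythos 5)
Your proposal is correct and coincides with the paper's (implicit) argument: the paper proves this proposition only by appeal to Proposition 8.1 of \cite{St1}, whose proof is exactly your computation — apply the defining Leibniz-type formula $(\overline{\nabla}_X\omega)(T)=X(\omega(T))-\left<\left<\overline{\nabla}^{\flat}_XT,\omega\right>\right>_\bullet$ with $\omega=\overline{\nabla}^{\flat}_YZ$ (legitimate since semi-regularity puts these $1$-forms in ${\mathcal{A}}^{\bullet}(M)$), subtract the term with $X$ and $Y$ interchanged and the bracket term, and use the symmetry of $\left<\left<\cdot,\cdot\right>\right>_\bullet$; the passage to the Koszul-form version is then pure notation. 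The only blemishes are harmless off-by-one slips in your equation and definition references (the curvature definition and the derivative rule are (2.14) and (2.10), and the relevant definition/equivalence are Definition 2.9 and Proposition 2.10 in the paper's numbering).
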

By (2.7),(2.12) and Theorem 2.2 and some computations, we can get\\
\begin{prop} For any vector fields $X,Y,Z,T\in\Gamma(TM)$ on a semi-symmetric metric semi-regular semi-Riemannian manifold $(M,g)$
 \begin{align}
\overline{R}(X,Y,Z,T)&={R}(X,Y,Z,T)-g(Y,P)g(X,Z)g(P,T)+g(X,P)g(Y,Z)g(P,T)\\\notag
&+\overline{\mathcal{K}}(X,P,Z)g(Y,T)-\overline{\mathcal{K}}(Y,P,Z)g(X,T)\\\notag
&-{\mathcal{K}}(X,P,T)g(Y,Z)+{\mathcal{K}}(Y,P,T)g(X,Z).
\end{align}
\end{prop}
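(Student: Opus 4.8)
The plan is to expand the definition (2.14) of $\overline{R}$ term by term, substitute the already-established identities (2.7) and (2.12), recognise the Levi-Civita part as $R(X,Y,Z,T)$, and reorganise the surviving $P$-dependent terms into the six correction terms on the right-hand side.

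First I would write $\overline{R}(X,Y,Z,T)=(\overline{\nabla}_X\overline{\nabla}^\flat_YZ)(T)-(\overline{\nabla}_Y\overline{\nabla}^\flat_XZ)(T)-\overline{\mathcal{K}}([X,Y],Z,T)$ and insert (2.12) into the first two summands, the second obtained from the first by the interchange $X\leftrightarrow Y$, and (2.7) into the last, evaluating every resulting $1$-form on $T$ so that $Y^\flat(T)=g(Y,T)$, $P^\flat(T)=g(P,T)$, and so on. The three genuinely Levi-Civita contributions $(\nabla_X\nabla^\flat_YZ)(T)-(\nabla_Y\nabla^\flat_XZ)(T)-\mathcal{K}([X,Y],Z,T)$ then assemble, through the definition (2.13) together with $(\nabla^\flat_{[X,Y]}Z)(T)=\mathcal{K}([X,Y],Z,T)$, into $R(X,Y,Z,T)$; everything else is a correction term that I must show collapses to the claimed expression.

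I would then sort the correction terms by their tensorial slot. The pieces carrying the factor $g(Z,P)$, namely $g(Z,P)[\mathcal{K}(X,Y,T)-\mathcal{K}(Y,X,T)-g([X,Y],T)]$, vanish by the torsion-free identity $\mathcal{K}(X,Y,T)-\mathcal{K}(Y,X,T)=g([X,Y],T)$, which I would obtain from Theorem 2.2(6) and the defining relation (2.2) by cancelling the $P$-quadratic terms. The pieces carrying $g(P,T)$ gather the two Lie-derivative/Koszul combinations with the leftover bracket term; converting each $\overline{\mathcal{K}}$ back to $\mathcal{K}$ through (2.2) and then applying the metric identity $\mathcal{K}(X,Z,Y)+\mathcal{K}(X,Y,Z)=X\,g(Y,Z)$ together with the torsion-free identity makes every differentiated-metric and bracket term cancel, leaving precisely $-g(Y,P)g(X,Z)g(P,T)+g(X,P)g(Y,Z)g(P,T)$. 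The two terms $-g(Y,Z)\mathcal{K}(X,P,T)+g(X,Z)\mathcal{K}(Y,P,T)$ survive directly and are already in final form.

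The only genuinely delicate step is the treatment of the terms in which a vector field differentiates an inner product, typified by $X(g(Z,P))g(Y,T)$. These must be paired with the leftover Koszul term $-\overline{\mathcal{K}}(X,Z,P)g(Y,T)$ and recombined through the metric property Theorem 2.2(5), written as $\overline{\mathcal{K}}(X,Z,P)+\overline{\mathcal{K}}(X,P,Z)=X\,g(Z,P)$, to produce $\overline{\mathcal{K}}(X,P,Z)g(Y,T)$; the $X\leftrightarrow Y$ symmetric pairing yields $-\overline{\mathcal{K}}(Y,P,Z)g(X,T)$. This is where the bookkeeping is heaviest: one must apply the metric identity for $\overline{\mathcal{K}}$ in exactly the direction that absorbs the differentiated-metric terms rather than leaving them dangling, and must keep careful track of which Koszul argument is the ``$P$-slot'' and which the ``$T$-slot''. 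Collecting the four surviving groups then gives the six correction terms of the asserted formula.
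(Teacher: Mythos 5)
Your proposal is correct and takes essentially the same route as the paper, whose entire proof is the one-line remark ``By (2.7), (2.12) and Theorem 2.2 and some computations, we can get''; your expansion of the curvature definition via (2.12) and (2.7), followed by the four cancellation groups (torsion-free identity for the $g(Z,P)$ terms, metric identity plus (2.2) for the $g(P,T)$ terms, direct survival of the $\mathcal{K}(\cdot,P,T)$ terms, and Theorem 2.2(5) for the differentiated-metric terms), is precisely those computations. The only caveat worth recording: the paper's (2.12) as printed carries a typo (the term $X(g(Y,Z))Y^\flat$ should read $X(g(Z,P))Y^\flat$, since it arises from differentiating $g(Z,P)Y^\flat$), and your bookkeeping---pairing $X(g(Z,P))g(Y,T)$ with $-\overline{\mathcal{K}}(X,Z,P)g(Y,T)$ via Theorem 2.2(5)---implicitly uses the corrected form, without which the stated formula would not come out.
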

Similarly to Proposition 7.7 (1) and (2) in \cite{St1}, we have
\begin{equation}
\overline{R}(X,Y,Z,T)=-\overline{R}(Y,X,Z,T),~~\overline{R}(X,Y,Z,T)=-\overline{R}(X,Y,T,Z).
\end{equation}
\begin{defn}(Definition 3.1 in \cite{St2})
 Let $(B,g_B)$ and $(F,g_F)$ be two singular
semi-Riemannian manifolds, and $f\in C^{\infty}(B)$ a smooth function. The warped product
of $B$ and $F$ with warping function $f$ is the semi-Riemannian manifold
\begin{equation}
B\times_fF:=(B\times F,\pi^*_B(g_B)+(f\circ \pi_B)^2\pi^*_F(g_F)),
\end{equation}
where $\pi_B: B \times F \rightarrow B $ and  $\pi_F: B \times F \rightarrow F $ are the canonical projections. It is
customary to call $B$ the base and $F$ the fiber of the warped product $B\times_fF$.
\end{defn}
\indent We know that $f$ may have zero points in above definition. The inner product on
$ B \times_ f F $ takes, for any point $p\in B\times F$ and for any pair of tangent vectors $x,y\in T_p(B\times F)$, the explicit form
\begin{equation}
\left<x,y\right>=\left<d\pi_B(x),d\pi_B(y)\right>_B+f^2(p)\left<d\pi_F(x),d\pi_F(y)\right>_F.
\end{equation}
By Proposition 3.6 in \cite{St2} and (2.2), we have
\begin{prop} Let $B\times_fF$ be a degenerate warped product and let the vector fields $X,Y,Z\in\Gamma(TB)$ and $U,V,W\in\Gamma(TF)$. Let
$\overline{\mathcal{K}}$ be the semi-symmetric metric Koszul form on $B\times_fF$ and $\overline{\mathcal{K}}_B$, $\overline{\mathcal{K}}_F$
the lifts of the semi-symmetric metric Koszul form on $B$, respectively $F$. Let $P\in \Gamma(TB)$, then\\
\indent (1) $\overline{\mathcal{K}}(X,Y,Z)=\overline{\mathcal{K}}_B(X,Y,Z)$.\\
\indent  (2) $\overline{\mathcal{K}}(X,Y,W)=\overline{\mathcal{K}}(X,W,Y)=\overline{\mathcal{K}}(W,X,Y)=0.$\\
\indent (3) $\overline{\mathcal{K}}(X,V,W)=fg_F(V,W)X(f).$\\
\indent (4)$\overline{\mathcal{K}}(V,X,W)=-\overline{\mathcal{K}}(V,W,X)=fg_F(V,W)X(f)+f^2g_B(X,P)g_F(V,W).$\\
\indent (5) $\overline{\mathcal{K}}(U,V,W)=f^2{\mathcal{K}}_F(U,V,W).$\\
\end{prop}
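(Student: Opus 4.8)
The plan is to bootstrap from the already-known decomposition of the \emph{ordinary} Koszul form on a warped product, namely Proposition 3.6 in \cite{St2}, and then add the two correction terms that the definition (2.2) attaches to $\mathcal{K}$. For any three fields the starting identity is
\[
\overline{\mathcal{K}}(A,B,C)=\mathcal{K}(A,B,C)+g(B,P)g(A,C)-g(A,B)g(P,C),
\]
so once $\mathcal{K}$ is known on every combination of base and fiber arguments it only remains to evaluate the two extra products $g(B,P)g(A,C)$ and $g(A,B)g(P,C)$ using the explicit inner product (2.20).

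The structural fact I would lean on throughout is the orthogonal splitting recorded in (2.20): the lift of a base field is orthogonal to the lift of a fiber field, $g$ restricted to base fields equals $g_B$, and $g$ restricted to fiber fields equals $f^2 g_F$. Because the hypothesis puts $P\in\Gamma(TB)$, this at once gives $g(P,V)=g(V,P)=0$ for every fiber field $V$, which is exactly what annihilates most of the correction terms.

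I would then run through the five cases. In (1), all arguments lie in $\Gamma(TB)$, so $g=g_B$ throughout, $\mathcal{K}(X,Y,Z)=\mathcal{K}_B(X,Y,Z)$ by Proposition 3.6 in \cite{St2}, and the two correction terms reassemble precisely into the base semi-symmetric Koszul form $\overline{\mathcal{K}}_B(X,Y,Z)$ via (2.2). In (5), all arguments lie in $\Gamma(TF)$; here both correction terms contain a factor $g(P,\cdot)$ or $g(\cdot,P)$ against a fiber field and hence vanish, leaving $\overline{\mathcal{K}}(U,V,W)=\mathcal{K}(U,V,W)=f^2\mathcal{K}_F(U,V,W)$. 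In the mixed cases (2) and (3), every correction term carries a pairing $g(\mathrm{base},\mathrm{fiber})=0$ or $g(P,\mathrm{fiber})=0$, so again $\overline{\mathcal{K}}$ coincides with $\mathcal{K}$ and the stated values follow directly from Proposition 3.6 in \cite{St2}. Case (4) is the only one where a correction genuinely survives: with the middle slot occupied by a base field $X$ one has $g(X,P)=g_B(X,P)$, and pairing it with $g(V,W)=f^2 g_F(V,W)$ produces the additional term $f^2 g_B(X,P)g_F(V,W)$ on top of $\mathcal{K}(V,X,W)=fg_F(V,W)X(f)$.

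There is no genuine analytic obstacle here; the argument is pure bookkeeping. The only points that demand care are inserting the weight $f^2$ from (2.20) whenever a pairing joins two fiber fields, and keeping track that $P$ being a base field is precisely what forces every correction term to collapse except the single surviving one in case (4). Once these are handled the five formulas drop out termwise.
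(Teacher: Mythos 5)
Your proposal is correct and is exactly the paper's route: the paper derives this proposition in one line ``By Proposition 3.6 in \cite{St2} and (2.2)'', i.e.\ the ordinary Koszul decomposition plus the two correction terms evaluated with the warped metric, with $P\in\Gamma(TB)$ killing all corrections except the one in case (4). Your case-by-case bookkeeping fills in precisely the computation the paper leaves implicit.
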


Similarly, we have
\begin{prop} Let $B\times_fF$ be a degenerate warped product and let the vector fields $X,Y,Z\in\Gamma(TB)$ and $U,V,W\in\Gamma(TF)$. Let
$\overline{\mathcal{K}}$ be the semi-symmetric metric Koszul form on $B\times_fF$ and $\overline{\mathcal{K}}_B$, $\overline{\mathcal{K}}_F$
the lifts of the semi-symmetric metric Koszul form on $B$, respectively $F$. Let $P\in \Gamma(TF)$, then\\
\indent (1) $\overline{\mathcal{K}}(X,Y,Z)={\mathcal{K}}_B(X,Y,Z)$.\\
\indent  (2) $\overline{\mathcal{K}}(X,Y,W)=-\overline{\mathcal{K}}(X,W,Y)=-f^2g_B(X,Y)g_F(P,W).$\\
\indent (3)$\overline{\mathcal{K}}(W,X,Y)=0.$\\
\indent (4) $\overline{\mathcal{K}}(X,V,W)=\overline{\mathcal{K}}(V,X,W)=-\overline{\mathcal{K}}(V,W,X)=fg_F(V,W)X(f).$\\
\indent (5) $\overline{\mathcal{K}}(U,V,W)=f^2{\mathcal{K}}_F(U,V,W)+f^4g_F(V,P)g_F(U,W)-f^4g_F(U,V)g_F(P,W).$\\
\end{prop}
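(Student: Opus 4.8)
The plan is to reduce every assertion to the defining identity (2.2) for $\overline{\mathcal{K}}$, to the values of the ordinary Koszul form $\mathcal{K}$ on a warped product given by Proposition 3.6 in \cite{St2}, and to the block structure of the warped metric recorded in (2.20). For each triple of arguments I would write $\overline{\mathcal{K}}(A,B,C)=\mathcal{K}(A,B,C)+g(B,P)g(A,C)-g(A,B)g(P,C)$, substitute the relevant value of $\mathcal{K}$ from Proposition 3.6 in \cite{St2}, and then simplify the two correction inner products using the geometry of (2.20). This is exactly the scheme used for Proposition 2.15; the present statement is its counterpart with $P$ vertical rather than horizontal.

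The two geometric facts that drive the computation, both contained in (2.20), are: horizontal and vertical vectors are orthogonal, so $g(X,W)=0$ and $g(X,P)=0$ for $X\in\Gamma(TB)$ and $W,P\in\Gamma(TF)$; and the fiber metric rescales, so $g(V,W)=f^2g_F(V,W)$, and in particular $g(V,P)=f^2g_F(V,P)$, for $V\in\Gamma(TF)$. Running through the five cases with $P\in\Gamma(TF)$: in (1) both corrections vanish since $g(Y,P)=g(P,Z)=0$, leaving $\overline{\mathcal{K}}(X,Y,Z)=\mathcal{K}_B(X,Y,Z)$; in (3) both vanish since $g(X,P)=g(P,Y)=0$; in (4) both vanish because each nonzero fiber factor $g(V,P)$ or $g(P,W)$ is multiplied by a vanishing mixed factor $g(X,W)$ or $g(X,V)$, so only the value $fg_F(V,W)X(f)$ of $\mathcal{K}$ remains; in (2) only $-g(X,Y)g(P,W)=-f^2g_B(X,Y)g_F(P,W)$ survives; and in (5) both corrections survive and, being products of fiber inner products, each carries a factor $f^4$, producing $f^4g_F(V,P)g_F(U,W)-f^4g_F(U,V)g_F(P,W)$. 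The chains of equalities in (2) and (4) then follow from the same substitution after permuting arguments, using $\mathcal{K}(X,W,Y)=0$ and $\mathcal{K}(V,W,X)=-fg_F(V,W)X(f)$ from Proposition 3.6 in \cite{St2}.

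There is no deep obstacle here; the argument is a direct substitution, and the only thing that demands care is tracking when a correction term survives. Each correction is a product of two inner products, one of them of the form $g(\cdot,P)$, and it is nonzero only when $P$ is paired with a vertical argument and, simultaneously, the remaining two arguments share the same horizontal/vertical type. As the case analysis shows, this occurs only in (2) and (5), and one must keep the powers of $f$ straight, since a surviving fiber factor $g(\cdot,P)$ and a surviving fiber metric term each contribute $f^2$ — hence a single $f^2$ in the base-fiber case (2) but $f^4$ in the purely vertical case (5). This is precisely the point at which the proof departs from that of Proposition 2.15, where $P$ is horizontal and the surviving corrections sit in the opposite slots.
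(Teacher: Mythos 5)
Your proposal is correct and follows exactly the paper's route: the paper proves this proposition (like its counterpart Proposition 2.15) by substituting the defining formula (2.2) into the warped-product values of $\mathcal{K}$ from Proposition 3.6 in \cite{St2} and simplifying the correction terms via the block form (2.20) of the metric. Your case analysis, including the surviving terms in (2) and (5) and the $f^2$ versus $f^4$ bookkeeping, matches the intended computation.
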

\begin{thm} Let $(B,g_B)$ be a nondegenerate manifold and $(F,g_F)$ be a semi-regular manifold and $f\in C^{\infty}(B)$.
Then $B\times_fF$ is a semi regular warped product. Let the vector fields $X,Y,Z,T\in\Gamma(TB)$ and $U,V,W,Q\in\Gamma(TF)$ and let $H^f$ be the Hessian of $f$. Let $P\in\Gamma(TB)$, then\\
\indent (1) $\overline{R}(X,Y,Z,T)=\overline{R}_B(X,Y,Z,T)$.\\
\indent  (2) $\overline{R}(X,Y,Z,Q)=\overline{R}(Z,Q,X,Y)=0.$\\
\indent (3)$\overline{R}(X,Y,W,Q)=\overline{R}(W,Q,X,Y)=0.$\\
\indent (4) $\overline{R}(X,V,W,T)=-fH^f(X,T)g_F(V,W)+f^2g_B(X,P)g_F(V,W)g_B(P,T).$\\
\indent $-fP(f)g_F(V,W)g_B(X,T)-f^2g_B(P,P)g_F(V,W)g_B(X,T)
-f^2g_F(V,W)K_B(X,P,T).$\\
\indent (5) $\overline{R}(U,V,Z,Q)=\overline{R}(Z,Q,U,V)=0.$\\
\indent (6)  $\overline{R}(U,V,W,Q)=f^2{R}_F(U,V,W,Q)+[f^2g^*_B(df,df)-2f^3P(f)-f^4g_B(P,P)]$\\
\indent $\times[g_F(U,W)g_F(V,Q)-g_F(V,W)g_F(U,Q)].$
\end{thm}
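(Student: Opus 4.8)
The plan is to compute each component of the curvature tensor $\overline{R}$ by feeding the appropriate block of vector fields into the formula from Proposition~2.17, which expresses $\overline{R}(X,Y,Z,T)$ in terms of the ordinary curvature $R$, the semi-symmetric Koszul form $\overline{\mathcal{K}}$, and the Levi-Civita Koszul form $\mathcal{K}$, together with the metric couplings to $P$. Since $P\in\Gamma(TB)$ lies entirely in the base, all terms of the form $g(\cdot,P)$ or $\mathcal{K}(\cdot,P,\cdot)$ and $\overline{\mathcal{K}}(\cdot,P,\cdot)$ vanish whenever a fiber field is slotted into the relevant position; this is what makes several components collapse. First I would treat items (1)--(3), which involve three or more base fields or mixed blocks that force a $g(\cdot,P)$ factor paired with a fiber field to vanish: here $\overline{R}$ reduces either to $\overline{R}_B$ via Proposition~2.16(1), or to zero because every surviving summand carries an inner product between a base field and a fiber field, which is $0$ by the warped-product metric~(2.21).

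Next I would handle the genuinely nontrivial mixed component~(4), $\overline{R}(X,V,W,T)$, where one base field $X$ in the first slot, fiber fields $V,W$ in the middle, and base field $T$ in the last slot interact. The strategy is to start from the expansion in Proposition~2.15 (equation~(2.17)), substituting the explicit values of $\overline{\mathcal{K}}$ on mixed triples from Proposition~2.21: namely $\overline{\mathcal{K}}(X,V,W)=fg_F(V,W)X(f)$, the identity $\overline{\mathcal{K}}(V,X,W)=fg_F(V,W)X(f)+f^2g_B(X,P)g_F(V,W)$, and their contractions via $\langle\langle\cdot,\cdot\rangle\rangle_\bullet$. The derivative terms $X(\overline{\mathcal{K}}(V,W,T))$ produce the Hessian $H^f(X,T)$ once we differentiate the product $f\,X(f)$-type expressions and recall that $H^f(X,T)=X(T(f))-(\nabla_XT)(f)=X(T(f))-\mathcal{K}(X,T,\cdot)(f)$, which is where the $\mathcal{K}_B(X,P,T)$ and $f P(f)$ correction terms in~(4) originate. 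I expect this to be the main obstacle: one must carefully track the connection terms coming from $[X,V]$, $[V,W]$, and the $\bullet$-contractions, and reconcile the Levi-Civita pieces (giving the Hessian and the $-fP(f)g_F(V,W)g_B(X,T)$ term) against the purely semi-symmetric pieces (giving the $g_B(X,P)$ and $g_B(P,P)$ terms) so that the stated formula emerges.

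For item~(5), $\overline{R}(U,V,Z,Q)$ with two fiber fields, one base field, and one fiber field, I would again return to~(2.17) and observe that every surviving contraction couples a base field $Z$ against fiber fields through $g$, forcing vanishing by~(2.21) together with the skew-symmetries~(2.19); the antisymmetry $\overline{R}(X,Y,Z,T)=-\overline{R}(X,Y,T,Z)$ lets me convert the $\overline{R}(Z,Q,U,V)$ claim into the already-established mixed cases. Finally, for the all-fiber component~(6), $\overline{R}(U,V,W,Q)$, the plan is to use Proposition~2.17 with all four arguments in $F$: the ordinary curvature piece $R(U,V,W,Q)$ lifts to $f^2R_F(U,V,W,Q)$ by Stoica's warped-product curvature formula (the analogue of Proposition~3.6 in~\cite{St2}), while the remaining terms assemble from $\overline{\mathcal{K}}(U,V,\bullet)\overline{\mathcal{K}}(V,Q,\bullet)$-type contractions together with the $g(\cdot,P)$ couplings.

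The key computational identities I would exploit throughout are: the explicit mixed-triple values in Proposition~2.21; the metric block-diagonality~(2.21) that kills base-against-fiber inner products; the definition of the Hessian through $H^f(X,T)=X(T(f))-\mathcal{K}_B(X,T,\cdot)(f)$; and the relation $g^*_B(df,df)=g_B(\mathrm{grad}\,f,\mathrm{grad}\,f)$ appearing in the scalar coefficient of~(6). The bracket $[f^2g^*_B(df,df)-2f^3P(f)-f^4g_B(P,P)]$ should emerge as a perfect-square-type combination $g_B(\mathrm{grad}\,f+f^2 P,\cdot)$ contracted against itself, so I would try to recognize this structure early to organize the algebra, since that is the most error-prone part of deriving~(6).
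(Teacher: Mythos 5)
Your core strategy is the one the paper itself uses: every item is obtained by specializing the comparison identity (2.17) of Proposition 2.13 (which you call ``Proposition 2.17''; likewise your ``Proposition 2.21'', ``(2.21)'' and ``(2.19)'' are Proposition 2.15, (2.20) and (2.18) of the paper) to a block of base/fiber fields, and then substituting the three warped-product inputs: Theorem 5.2 of \cite{St2} for $R$, Proposition 3.6 of \cite{St2} for $\mathcal{K}$, and Proposition 2.15 for $\overline{\mathcal{K}}$. For items (1), (2), (3), (5), (6) this is exactly the paper's proof. Your genuine divergence is item (4), which you propose to prove from the first-principles expansion of Proposition 2.12 (derivative terms plus $\bullet$-contractions) rather than from (2.17). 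That route does close: with commuting lifts, $V(\overline{\mathcal{K}}(X,W,T))$ and $\overline{\mathcal{K}}([X,V],W,T)$ vanish by Proposition 2.15(2), the contraction $\overline{\mathcal{K}}(X,W,\bullet)\overline{\mathcal{K}}(V,T,\bullet)$ cancels the $X(f)T(f)$ term, and the Hessian appears as $X(T(f))-(\nabla^B_XT)(f)$, as you predict. But note its cost relative to the paper: you must evaluate $\left<\left<\cdot,\cdot\right>\right>_\bullet$ on the degenerate product, i.e.\ split the contraction into $\bullet^B$ plus $\frac{1}{f^2}\bullet^F$, assume $f>0$, and extend smoothly across $f=0$ --- precisely the machinery the paper deploys only in (4.10)--(4.18) and (5.3)--(5.16), where no comparison formula is available. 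In the paper's route the Hessian is imported for free from Stoica's $R(X,V,W,T)=-fH^f(X,T)g_F(V,W)$, and the terms $-fP(f)g_F(V,W)g_B(X,T)$, $-f^2g_B(P,P)g_F(V,W)g_B(X,T)$, $-f^2g_F(V,W)\mathcal{K}_B(X,P,T)$ come from $-\overline{\mathcal{K}}(V,P,W)g(X,T)$ and $-\mathcal{K}(X,P,T)g(V,W)$ in (2.17), not from the Hessian identity as your sketch suggests.

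Two concrete defects. First, in (5) your reduction of $\overline{R}(Z,Q,U,V)$ to ``already-established mixed cases'' via $\overline{R}(X,Y,Z,T)=-\overline{R}(X,Y,T,Z)$ fails: swapping the last two slots turns $(Z,Q,U,V)$ into $(Z,Q,V,U)$, the same block type. What you would need is the pair symmetry $\overline{R}(X,Y,Z,T)=\overline{R}(Z,T,X,Y)$, which does not hold for a semi-symmetric metric connection (only the two antisymmetries (2.18) survive); this is exactly why the theorem states $\overline{R}(U,V,Z,Q)$ and $\overline{R}(Z,Q,U,V)$ as separate claims. The repair is immediate: evaluate (2.17) at $(Z,Q,U,V)$; each term dies because $R(Z,Q,U,V)=0$ by Theorem 5.2 of \cite{St2}, or carries a factor $g(\mathrm{base},\mathrm{fiber})=0$, or a Koszul value that vanishes by Proposition 2.15(2) or Proposition 3.6(2) of \cite{St2}. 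Second, your closing remark on (6) is internally inconsistent: the printed bracket $f^2g^*_B(df,df)-2f^3P(f)-f^4g_B(P,P)$ has the shape $a^2-2ab-b^2$ and is not a perfect square. Carrying out the substitution you yourself set up, the $P$-dependent terms of (2.17), namely $+\overline{\mathcal{K}}(U,P,W)g(V,Q)-\overline{\mathcal{K}}(V,P,W)g(U,Q)-\mathcal{K}(U,P,Q)g(V,W)+\mathcal{K}(V,P,Q)g(U,W)$, assemble via Proposition 2.15(4) and $\mathcal{K}(U,P,Q)=fP(f)g_F(U,Q)$ into $+\left[2f^3P(f)+f^4g_B(P,P)\right]\left[g_F(U,W)g_F(V,Q)-g_F(V,W)g_F(U,Q)\right]$, so the coefficient one actually obtains is $f^2g^*_B(df,df)+2f^3P(f)+f^4g_B(P,P)=g^*_B(f\,df+f^2P^\flat,f\,df+f^2P^\flat)$, the perfect square your heuristic predicts. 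A check on flat $\mathbb{R}\times\mathbb{R}^2$ with $f\equiv1$, $P=\partial_x$, taking $U=W=\partial_{y_1}$, $V=Q=\partial_{y_2}$, gives $\overline{R}(U,V,W,Q)=+1=+f^4g_B(P,P)$, confirming the plus signs. So your perfect-square intuition is correct but contradicts the statement as printed; a complete write-up must confront and resolve this sign discrepancy (apparently a slip in the stated coefficient of (6)) rather than assume the printed bracket will emerge from the algebra.
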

\begin{proof}In order to prove these identities, we will use (2.17), Proposition 2.15 and Theorem 5.2 in \cite{St2}, Proposition 3.6 in \cite{St2}.
 \begin{align}
(1)\overline{R}(X,Y,Z,T)&={R}(X,Y,Z,T)-g(Y,P)g(X,Z)g(P,T)+g(X,P)g(Y,Z)g(P,T)\\\notag
&+\overline{\mathcal{K}}(X,P,Z)g(Y,T)-\overline{\mathcal{K}}(Y,P,Z)g(X,T)\\\notag
&-{\mathcal{K}}(X,P,T)g(Y,Z)+{\mathcal{K}}(Y,P,T)g(X,Z)\\\notag
&={R}_B(X,Y,Z,T)-g_B(Y,P)g_B(X,Z)g_B(P,T)+g_B(X,P)g_B(Y,Z)g_B(P,T)\\\notag
&+\overline{\mathcal{K}}_B(X,P,Z)g_B(Y,T)-\overline{\mathcal{K}}_B(Y,P,Z)g_B(X,T)\\\notag
&-{\mathcal{K}}_B(X,P,T)g_B(Y,Z)+{\mathcal{K}}_B(Y,P,T)g_B(X,Z)\\\notag
&=\overline{R}_B(X,Y,Z,T).
\end{align}
\begin{align}
(2)\overline{R}(X,Y,Z,Q)&={R}(X,Y,Z,Q)-g(Y,P)g(X,Z)g(P,Q)+g(X,P)g(Y,Z)g(P,Q)\\\notag
&+\overline{\mathcal{K}}(X,P,Z)g(Y,Q)-\overline{\mathcal{K}}(Y,P,Z)g(X,Q)\\\notag
&-{\mathcal{K}}(X,P,Q)g(Y,Z)+{\mathcal{K}}(Y,P,Q)g(X,Z)\\\notag
&=0,
\end{align}
where we applied (2) from Theorem 5.2 in \cite{St2} and (2) from Proposition 3.6 in \cite{St2}. Similarly, we can prove (2) and (3).
 \begin{align}
(4)\overline{R}(X,V,W,T)&={R}(X,V,W,T)-g(V,P)g(X,W)g(P,T)+g(X,P)g(V,W)g(P,T)\\\notag
&+\overline{\mathcal{K}}(X,P,W)g(V,T)-\overline{\mathcal{K}}(V,P,W)g(X,T)\\\notag
&-{\mathcal{K}}(X,P,T)g(V,W)+{\mathcal{K}}(V,P,T)g(X,W)\\\notag
&=-fH^f(X,T)g_F(V,W)+f^2g_B(X,P)g_F(V,W)g_B(P,T)\\\notag
&-fP(f)g_F(V,W)g_B(X,T)-f^2g_B(P,P)g_F(V,W)g_B(X,T)\\\notag
&-f^2g_F(V,W)K_B(X,P,T),
\end{align}
where we applied (5) from Theorem 5.2 in \cite{St2} and (1) from Proposition 3.6 in \cite{St2} and (4) from Proposition 2.15.
 \begin{align}
(6)\overline{R}(U,V,W,Q)&={R}(U,V,W,Q)-g(V,P)g(U,W)g(P,Q)+g(U,P)g(V,W)g(P,Q)\\\notag
&+\overline{\mathcal{K}}(U,P,W)g(V,Q)-\overline{\mathcal{K}}(V,P,W)g(U,Q)\\\notag
&-{\mathcal{K}}(U,P,Q)g(V,W)+{\mathcal{K}}(V,P,Q)g(U,W)\\\notag
&=f^2{R}_F(U,V,W,Q)+[f^2g^*_B(df,df)-2f^3P(f)-f^4g_B(P,P)]\\\notag
&\times[g_F(U,W)g_F(V,Q)-g_F(V,W)g_F(U,Q)],
\end{align}
where we applied (6) from Theorem 5.2 in \cite{St2} and (3) from Proposition 3.6 in \cite{St2} and (4) from Proposition 2.15.
\end{proof}
Using (2.17), Proposition 2.16 and Theorem 5.2 in \cite{St2}, Proposition 3.6 in \cite{St2}, similarly to Theorem 2.17, we can get
\begin{thm} Let $(B,g_B)$ be a nondegenerate manifold and $(F,g_F)$ be a semi-regular manifold and $f\in C^{\infty}(B)$.
Then $B\times_fF$ is a semi regular warped product. Let the vector fields $X,Y,Z,T\in\Gamma(TB)$ and $U,V,W,Q\in\Gamma(TF)$. Let $P\in\Gamma(TF)$, then\\
\indent (1) $\overline{R}(X,Y,Z,T)={R}_B(X,Y,Z,T)+f^2g_B(X,Z)g_F(P,P)g_B(Y,T)$\\
\indent $-f^2g_B(Y,Z)g_F(P,P)g_B(X,T)$.\\
\indent  (2) $\overline{R}(X,Y,Z,Q)=-\overline{R}(Z,Q,X,Y)=-fX(f)g_B(Y,Z)g_F(P,Q)+fY(f)g_B(X,Z)g_F(P,Q).$\\
\indent (3)$\overline{R}(X,Y,W,Q)=\overline{R}(W,Q,X,Y)=0.$\\
\indent (4) $\overline{R}(X,V,W,T)=-fH^f(X,T)g_F(V,W)-[{\mathcal{K}}_F(V,P,W)+f^2g_F(P,P)g_F(V,W)-f^2g_F(V,P)g_F(P,W)]f^2g_B(X,T).$\\
\indent (5) $\overline{R}(U,V,Z,Q)=-\overline{R}(Z,Q,U,V)=f^3Z(f)[-g_F(P,U)g_F(Q,V)+g_F(P,V)g_F(Q,U)].$\\
\indent (6)  $\overline{R}(U,V,W,Q)=f^2{R}_F(U,V,W,Q)+f^2g^*_B(df,df)[g_F(U,W)g_F(V,Q)-g_F(V,W)g_F(U,Q)]$\\
\indent ~~~~$-f^6g_F(V,P)g_F(U,W)g_F(P,Q)+f^6g_F(U,P)g_F(V,W)g_F(P,Q)$\\
\indent $+f^4{\mathcal{K}}_F(U,P,W)g_F(V,Q)-f^4{\mathcal{K}}_F(V,P,W)g_F(U,Q)$\\
\indent $-f^4{\mathcal{K}}_F(U,P,Q)g_F(V,W)
+f^4{\mathcal{K}}_F(V,P,Q)g_F(U,W)$\\
\indent $+f^6g_F(P,P)g_F(U,W)g_F(V,Q)-f^6g_F(U,P)g_F(P,W)g_F(V,Q)$\\
\indent $-f^6g_F(P,P)g_F(V,W)g_F(U,Q)+f^6g_F(V,P)g_F(P,W)g_F(U,Q).$
\end{thm}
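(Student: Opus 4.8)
The plan is to run the proof of Theorem 2.17 essentially verbatim, the sole change being that Proposition 2.15 is everywhere replaced by Proposition 2.16, since $P$ now lies in the fibre rather than in the base. Concretely, for each of the six admissible base/fibre patterns of the four arguments I would substitute those arguments into the master curvature identity (2.17) and evaluate its seven terms one at a time: the Levi-Civita curvature $R$ from Theorem 5.2 in \cite{St2}, the inner products $g$ from the warped metric of Definition 2.14, the semi-symmetric metric Koszul form $\overline{\mathcal{K}}$ from Proposition 2.16, and the ordinary Koszul form $\mathcal{K}$ from Proposition 3.6 in \cite{St2}. Summing the surviving contributions and simplifying then produces the six displayed identities, while the skew-symmetries $\overline{R}(X,Y,Z,T)=-\overline{R}(Y,X,Z,T)=-\overline{R}(X,Y,T,Z)$ already recorded for $\overline{R}$ serve throughout as sign checks.

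The one genuinely new feature, compared with the case $P\in\Gamma(TB)$ of Theorem 2.17, is the behaviour of the pairings with $P$. Because $P\in\Gamma(TF)$, every inner product of $P$ against a basic field vanishes, whereas a pairing against a fibre field $V$ acquires the warping factor $g(V,P)=f^{2}g_{F}(V,P)$; this reshuffles entirely which of the seven terms of (2.17) survive in each case. Thus in the all-basic case (1) the term $\overline{\mathcal{K}}(X,P,Z)\,g(Y,T)$ is no longer zero: Proposition 2.16(2) gives $\overline{\mathcal{K}}(X,P,Z)=f^{2}g_{B}(X,Z)g_{F}(P,P)$, and it is exactly this, together with its companion, that produces the two extra $f^{2}g_{F}(P,P)$ terms distinguishing (1) from the bare answer $R_{B}$. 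In the mixed case (2) the only survivors are the two basic Koszul contributions $\mathcal{K}(X,P,Q)\,g(Y,Z)$ and $\mathcal{K}(Y,P,Q)\,g(X,Z)$, evaluated by Proposition 3.6 in \cite{St2}, every other term dying by orthogonality; in case (5) the survivors are instead $\overline{\mathcal{K}}(U,P,Z)\,g(V,Q)$ and $\overline{\mathcal{K}}(V,P,Z)\,g(U,Q)$, evaluated by Proposition 2.16(4).

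The real labour, and the step I expect to be the main obstacle, is the all-fibre case (6), in which \emph{every} term of (2.17) contributes. Here one must track several superimposed sources of powers of $f$ at once: a factor $f^{2}$ from each fibre inner product $g$; the leading $f^{2}R_{F}$ piece and the $f^{2}g^{*}_{B}(df,df)$ term coming from Theorem 5.2 in \cite{St2}; the fibre values $f^{2}\mathcal{K}_{F}$ of the basic Koszul terms from Proposition 3.6 in \cite{St2}; and, above all, the two quartic corrections $f^{4}g_{F}(V,P)g_{F}(U,W)-f^{4}g_{F}(U,V)g_{F}(P,W)$ buried inside the all-fibre value $\overline{\mathcal{K}}(U,V,W)$ of Proposition 2.16(5). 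Multiplying these out yields the asserted mixture of $f^{2}$, $f^{4}$ and $f^{6}$ terms, and the error-prone point is keeping the many $g_{F}(P,P)$, $g_{F}(U,P)$ and $g_{F}(P,W)$ monomials correctly signed and paired after cancellation, the skew-symmetry in each pair of arguments being the most useful guard against slips. Once (6) is in order, case (4) follows by the same mechanism with only $\overline{\mathcal{K}}(V,P,W)\,g(X,T)$ surviving, and it is precisely this all-fibre evaluation, via Proposition 2.16(5), that accounts for the term $\mathcal{K}_{F}(V,P,W)$ appearing in (4).
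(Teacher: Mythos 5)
Your proposal is correct and is essentially identical to the paper's own proof: the paper establishes this theorem precisely by substituting each base/fibre pattern into identity (2.17) and evaluating the terms via Proposition 2.16, Theorem 5.2 in \cite{St2} and Proposition 3.6 in \cite{St2}, ``similarly to Theorem 2.17,'' which is exactly the substitution you describe. Your term-by-term accounting (e.g.\ $\overline{\mathcal{K}}(X,P,Z)=f^{2}g_{B}(X,Z)g_{F}(P,P)$ producing the extra terms in (1), the $\mathcal{K}$-terms surviving in (2), $\overline{\mathcal{K}}(U,P,Z)$ in (5), and the $f^{2}$, $f^{4}$, $f^{6}$ bookkeeping in (6)) checks out against the stated formulas.
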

\section{Semi-symmetric non-metric Koszul forms and their curvature of semi-regular warped products }
 \begin{defn}
 Semi-symmetric non-metric Koszul forms $\widehat{\mathcal{K}_P}:\Gamma(TM)^3\rightarrow C^{\infty}(M)$ on $(M,g)$ is defined as
\begin{equation}
\widehat{\mathcal{K}_P}(X,Y,Z):=\mathcal{K}(X,Y,Z)+g(Y,P)g(X,Z).
\end{equation}
Usually we write $\widehat{\mathcal{K}}$ instead of $\widehat{\mathcal{K}_P}$.
\end{defn}
By Definition 3.1 and Theorem 2.3 in \cite{St2}, we can get
\begin{thm}
Properties of the semi-symmetric non-metric Koszul form of a singular semi-Riemannian manifold $(M,g)$:\\
\indent (1) Additivity and $\mathcal{R}$-linearity in each of its arguments.\\
\indent  (2) $\widehat{\mathcal{K}}(fX,Y,Z)=f\widehat{\mathcal{K}}(X,Y,Z).$\\
\indent (3) $\widehat{\mathcal{K}}(X,fY,Z)=f\widehat{\mathcal{K}}(X,Y,Z)+X(f)\left<Y,Z\right>.$\\
\indent (4)$\widehat{\mathcal{K}}(X,Y,fZ)=f\widehat{\mathcal{K}}(X,Y,Z).$\\
\indent (5)$\widehat{\mathcal{K}}(X,Y,Z)+\widehat{\mathcal{K}}(X,Z,Y)=X\left<Y,Z\right>+g(Y,P)g(X,Z)+g(Z,P)g(X,Y).$\\
\indent (6)$\widehat{\mathcal{K}}(X,Y,Z)-\widehat{\mathcal{K}}(Y,X,Z)=\left<[X,Y]+g(Y,P)X-g(X,P)Y,Z\right>$.\\
\indent (7)$\widehat{\mathcal{K}}(X,Y,Z)+\widehat{\mathcal{K}}(Z,Y,X)=(L_Yg)(Z,X)+2g(Y,P)g(X,Z)$.\\
\indent (8)$\widehat{\mathcal{K}}(X,Y,Z)+\widehat{\mathcal{K}}(Y,Z,X)=Y\left<Z,X\right>+\left<[X,Y],Z\right>+g(Y,P)g(X,Z)+g(Z,P)g(X,Y)$.\\
\end{thm}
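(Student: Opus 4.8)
The plan is to reduce every identity to the corresponding property of the ordinary Koszul form $\mathcal{K}$, which is already available as Theorem 2.3 in \cite{St2}, and then to track the single correction term $g(Y,P)g(X,Z)$ appearing in Definition 3.1 through each manipulation. Since $\widehat{\mathcal{K}}(X,Y,Z)=\mathcal{K}(X,Y,Z)+g(Y,P)g(X,Z)$ and the correction is $\mathcal{R}$-bilinear and additive in each of its three slots, item (1) is immediate from the additivity and $\mathcal{R}$-linearity of $\mathcal{K}$.

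For the tensoriality and Leibniz items (2)--(4), I would substitute the definition and use the matching behaviour of $\mathcal{K}$. In (2) and (4) the correction term scales by $f$ exactly as $\mathcal{K}$ does, because $g(Y,P)g(fX,Z)=fg(Y,P)g(X,Z)$ and $g(Y,P)g(X,fZ)=fg(Y,P)g(X,Z)$, so both reduce to $f\widehat{\mathcal{K}}(X,Y,Z)$. In (3) the correction gives $g(fY,P)g(X,Z)=fg(Y,P)g(X,Z)$, and since $\mathcal{K}(X,fY,Z)=f\mathcal{K}(X,Y,Z)+X(f)\left<Y,Z\right>$, the derivation term $X(f)\left<Y,Z\right>$ survives unchanged.

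The symmetrization identities (5)--(8) are handled by adding two copies of the definition under the relevant permutation and invoking the known $\mathcal{K}$-identities $\mathcal{K}(X,Y,Z)+\mathcal{K}(X,Z,Y)=X\left<Y,Z\right>$, $\mathcal{K}(X,Y,Z)-\mathcal{K}(Y,X,Z)=\left<[X,Y],Z\right>$, $\mathcal{K}(X,Y,Z)+\mathcal{K}(Z,Y,X)=(L_Yg)(Z,X)$, and $\mathcal{K}(X,Y,Z)+\mathcal{K}(Y,Z,X)=Y\left<Z,X\right>+\left<[X,Y],Z\right>$. For each one I would just collect the two correction contributions: in (5) they give $g(Y,P)g(X,Z)+g(Z,P)g(X,Y)$; in (7) they give $2g(Y,P)g(X,Z)$; in (8) they give $g(Y,P)g(X,Z)+g(Z,P)g(X,Y)$. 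Identity (6) is the one case where the correction is antisymmetrised, producing $g(Y,P)g(X,Z)-g(X,P)g(Y,Z)$, which recombines with $\left<[X,Y],Z\right>$ into $\left<[X,Y]+g(Y,P)X-g(X,P)Y,Z\right>$; note this matches Theorem 2.2(6) verbatim, since the extra metric term of $\overline{\mathcal{K}}$ cancels under antisymmetry.

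There is no genuine obstacle here: the argument is pure bookkeeping once the properties of $\mathcal{K}$ are imported, exactly as in the proof of Theorem 2.2, where only (5) and (6) were written out and the rest asserted to follow similarly. The only point requiring a little care is keeping the two correction terms in the right slots when symmetrising, so that (5) and (8) pick up the symmetric pair $g(Y,P)g(X,Z)+g(Z,P)g(X,Y)$ while (7) collapses to $2g(Y,P)g(X,Z)$; I would therefore present (5) and (8) explicitly and leave the remainder to the reader.
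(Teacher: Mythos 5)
Your proposal is correct and follows exactly the paper's route: the paper proves this theorem by the same reduction, citing Definition 3.1 together with the known properties of $\mathcal{K}$ (Theorem 2.3 in \cite{St2}) and tracking the single correction term $g(Y,P)g(X,Z)$ through each permutation, just as it did for the semi-symmetric metric case in Theorem 2.2. Your bookkeeping of the correction terms in (5)--(8), including the observation that (6) coincides with Theorem 2.2(6) because the symmetric term cancels under antisymmetrization, is accurate.
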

\begin{defn}
Let $X,Y\in\Gamma(TM).$ The semi-symmetric non-metric lower covariant derivative of $Y$ in the direction of $X$ as the differential $1$-form
$\widehat{\nabla}^{\flat}_XY\in A^1(M)$
\begin{equation}
\widehat{\nabla}^{\flat}_XY(Z):=\widehat{\mathcal{K}}(X,Y,Z),
\end{equation}
for any $Z\in \Gamma(TM)$.
\end{defn}
 Similarly to Corollary 5.6 in \cite{St1}, we use (5) and (6) in Theorem 3.2 and get
\begin{cor} If $X,Y\in \Gamma(TM)$ and $W\in \Gamma_0(TM)$, then
\begin{equation}
\widehat{\mathcal{K}}(X,Y,W)=\widehat{\mathcal{K}}(Y,X,W)=-\widehat{\mathcal{K}}(X,W,Y)=-\widehat{\mathcal{K}}(Y,W,X).
\end{equation}
\end{cor}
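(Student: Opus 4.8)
The plan is to follow the proof of Corollary 2.4 in the semi-symmetric metric case verbatim, using only identities (5) and (6) of Theorem 3.2 together with the defining property of $\Gamma_0(TM)$: if $W\in\Gamma_0(TM)$, then $g(W,Y)=0$ for every $Y\in\Gamma(TM)$.

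First I would set the third argument equal to $W$ in identity (6), obtaining
\[
\widehat{\mathcal{K}}(X,Y,W)-\widehat{\mathcal{K}}(Y,X,W)=\langle[X,Y],W\rangle+g(Y,P)\langle X,W\rangle-g(X,P)\langle Y,W\rangle.
\]
Every term on the right pairs some vector against $W$ through $g$, hence each vanishes because $W\in\Gamma_0(TM)$. This establishes the first equality, $\widehat{\mathcal{K}}(X,Y,W)=\widehat{\mathcal{K}}(Y,X,W)$.

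Next I would set $Z=W$ in identity (5), which gives
\[
\widehat{\mathcal{K}}(X,Y,W)+\widehat{\mathcal{K}}(X,W,Y)=X\langle Y,W\rangle+g(Y,P)g(X,W)+g(W,P)g(X,Y).
\]
Here $X\langle Y,W\rangle=X(0)=0$, while the factors $g(X,W)$ and $g(W,P)$ annihilate the two $P$-terms; thus $\widehat{\mathcal{K}}(X,Y,W)=-\widehat{\mathcal{K}}(X,W,Y)$. Interchanging the roles of $X$ and $Y$ in the same identity likewise yields $\widehat{\mathcal{K}}(Y,X,W)=-\widehat{\mathcal{K}}(Y,W,X)$.

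Finally I would chain the three relations together:
\[
-\widehat{\mathcal{K}}(X,W,Y)=\widehat{\mathcal{K}}(X,Y,W)=\widehat{\mathcal{K}}(Y,X,W)=-\widehat{\mathcal{K}}(Y,W,X),
\]
which is precisely the asserted string of identities. The argument is essentially bookkeeping, so there is no substantial obstacle; the only point that genuinely deserves attention is that the semi-symmetric non-metric Koszul form carries the additional term $g(Y,P)g(X,Z)$ relative to $\mathcal{K}$. One should therefore check that the extra $P$-dependent terms appearing on the right-hand sides of (5) and (6) each contain a factor of the form $g(\cdot,W)$ or $g(W,\cdot)$, so that they too vanish on the radical $\Gamma_0(TM)$ — exactly as in the metric case of Corollary 2.4.
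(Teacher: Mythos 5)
Your proof is correct and follows exactly the route the paper intends: the paper's own justification for this corollary is the one-line remark that it follows from identities (5) and (6) of Theorem 3.2 (in analogy with Corollary 5.6 of \cite{St1}), and you have simply carried out that computation in full, correctly observing that every extra $P$-dependent term contains a factor $g(\cdot,W)$ or $\langle\cdot,W\rangle$ vanishing on the radical. Nothing further is needed.
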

 By Theorem 3.2 and Definition 3.3, we get
\begin{prop}$\widehat{\nabla}^{\flat}_XY$ has the following properties:\\
\indent (1) Additivity and $\mathcal{R}$-linearity in each of its arguments.\\
\indent  (2) $\widehat{\nabla}^{\flat}_{fX}Y=f\widehat{\nabla}^{\flat}_XY.$\\
\indent (3) $\widehat{\nabla}^{\flat}_X(fY)=f\widehat{\nabla}^{\flat}_XY+X(f)Y^\flat.$\\
\indent (4)$(\widehat{\nabla}^{\flat}_{X}Y)(Z)+(\widehat{\nabla}^{\flat}_{X}Z)(Y)=X\left<Y,Z\right>+g(Y,P)g(X,Z)+g(Z,P)g(X,Y).$\\
\indent (5)$\widehat{\nabla}^{\flat}_{X}Y-\widehat{\nabla}^{\flat}_{Y}X=[X,Y]^\flat+g(Y,P)X^\flat-g(X,P)Y^\flat.$\\
\indent (6)$(\widehat{\nabla}^{\flat}_{X}Y)(Z)+(\widehat{\nabla}^{\flat}_{Z}Y)(X)=(L_Yg)(Z,X)+2g(Y,P)g(X,Z)$.\\
\indent (7)$(\widehat{\nabla}^{\flat}_{X}Y)(Z)+(\widehat{\nabla}^{\flat}_{Y}Z)(X)
=Y\left<Z,X\right>+\left<[X,Y],Z\right>+g(Y,P)g(X,Z)+g(Z,P)g(X,Y)$.\\
\end{prop}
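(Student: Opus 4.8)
The plan is to derive every item directly from Theorem~3.2 together with Definition~3.3, exactly in the spirit of the proof of Proposition~2.5 for the metric case. The guiding principle is that each assertion about the one-form $\widehat{\nabla}^{\flat}_XY$ becomes, after evaluation on an arbitrary test field and one application of the defining relation $(\widehat{\nabla}^{\flat}_XY)(Z)=\widehat{\mathcal{K}}(X,Y,Z)$, a scalar identity about the Koszul form $\widehat{\mathcal{K}}$ that is already recorded in Theorem~3.2. Thus the whole proof is a sequence of one-line reductions, and no computation beyond unpacking the definition is needed.

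First I would dispatch the algebraic items (1)--(3). Additivity and $\mathcal{R}$-linearity in each slot follow from Theorem~3.2(1). For (2), evaluating at arbitrary $Z$ gives $(\widehat{\nabla}^{\flat}_{fX}Y)(Z)=\widehat{\mathcal{K}}(fX,Y,Z)=f\,\widehat{\mathcal{K}}(X,Y,Z)$ by Theorem~3.2(2), and since $Z$ is arbitrary this is the claimed one-form identity; (3) is identical, using Theorem~3.2(3) together with $Y^\flat(Z)=g(Y,Z)=\langle Y,Z\rangle$. At this stage I would also record that Theorem~3.2(4), the $C^\infty$-linearity in the third slot, is precisely what guarantees $\widehat{\nabla}^{\flat}_XY$ is a genuine element of $A^1(M)$, so that these one-form statements are meaningful in the first place.

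Next come the contraction identities (4)--(7), where a little care with the bookkeeping is needed because the slot being paired off changes from item to item. Item (4) is already written with both one-forms evaluated, so rewriting the two terms by Definition~3.3 turns it verbatim into Theorem~3.2(5). For (5), which is an equality of one-forms, I would evaluate both sides at $Z$: the left becomes $\widehat{\mathcal{K}}(X,Y,Z)-\widehat{\mathcal{K}}(Y,X,Z)$, and Theorem~3.2(6) identifies this with $\langle [X,Y]+g(Y,P)X-g(X,P)Y,\,Z\rangle$, which is exactly the value at $Z$ of $[X,Y]^\flat+g(Y,P)X^\flat-g(X,P)Y^\flat$; arbitrariness of $Z$ then yields the one-form identity. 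Items (6) and (7) are scalar after applying Definition~3.3 and transcribe directly into Theorem~3.2(7) and Theorem~3.2(8) respectively.

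There is no genuine obstacle here: every step is a one-line reduction to Theorem~3.2, and the only care required is (i) keeping track of which slot of $\widehat{\mathcal{K}}$ carries the contracting field $Z$ (or $X$) in each item, and (ii) consistently translating the musical notation via $W^\flat(\cdot)=g(W,\cdot)$ and $[X,Y]^\flat(Z)=\langle [X,Y],Z\rangle$. Collecting these reductions establishes all seven properties.
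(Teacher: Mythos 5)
Your proposal is correct and matches the paper's own argument: the paper proves Proposition 3.5 by exactly this reduction, citing Theorem 3.2 together with Definition 3.3 (the paper just states the reduction without spelling out the item-by-item correspondence that you make explicit). Your slot bookkeeping — (1)--(3) from Theorem 3.2(1)--(3), (4) from 3.2(5), (5) from 3.2(6), (6) from 3.2(7), (7) from 3.2(8) — is accurate.
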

By Definition 3.1 and 3.3, we have
\begin{equation}
\widehat{\nabla}^{\flat}_XY={\nabla}^{\flat}_XY+g(Y,P)X^\flat .
\end{equation}
By (3.4), we have a singular manifold $(M,g)$ is radical-stationary if and only if $\widehat{\nabla}^{\flat}_XY\in{\mathcal{A}}^{\bullet}(M)$. By Corollary 3.4, if a singular manifold $(M,g)$ is radical-stationary, then for $X,Y\in \Gamma(TM)$ and $W\in \Gamma_0(TM)$, we have
\begin{equation}
\widehat{\mathcal{K}}(X,Y,W)=\widehat{\mathcal{K}}(Y,X,W)=-\widehat{\mathcal{K}}(X,W,Y)=-\widehat{\mathcal{K}}(Y,W,X)=0.
\end{equation}
\begin{defn}
Let $X\in\Gamma(TM)$, $\omega\in {\mathcal{A}}^{\bullet}(M)$, where $(M,g)$ is radical-stationary. The semi-symmetric non-metric covariant derivative of $\omega$ in
 the direction $X$ is defined as
 \begin{equation}\widehat{\nabla}:\Gamma(TM)\times {\mathcal{A}}^{\bullet}(M)\rightarrow A^1_d(M),
( \widehat{\nabla}_X\omega)(Y):=X(\omega(Y))-\left<\left<\widehat{\nabla}^{\flat}_XY,\omega\right>\right>_\bullet.
\end{equation}
\end{defn}
By (3.4),(3.6) and (2.9), we have
 \begin{equation}
\widehat{\nabla}_X\omega={\nabla}_X\omega-\omega(X)P^\flat.
\end{equation}
Similarly to Theorem 6.13 in \cite{St1}, we have
\begin{prop}The semi-symmetric non-metric covariant derivative $\widehat{\nabla}$ has the following properties:\\
\indent (1) Additivity and $\mathcal{R}$-linearity in each of its arguments.\\
\indent  (2) $\widehat{\nabla}_{fX}\omega=f\widehat{\nabla}_{X}\omega.$\\
\indent (3) $\widehat{\nabla}_X(f\omega)=f\widehat{\nabla}_X\omega+X(f)\omega.$\\
\indent (4)$\widehat{\nabla}_{X}Y^\flat=\widehat{\nabla}^\flat_{X}Y-g(Y,P)X^\flat-g(X,Y)P^\flat.$\\
\end{prop}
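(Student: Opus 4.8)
The plan is to reduce every assertion to the corresponding property of the ordinary covariant derivative $\nabla$ on one-forms, whose behaviour is recorded in Theorem 6.13 of \cite{St1}, by exploiting the decomposition (3.9), $\widehat{\nabla}_X\omega=\nabla_X\omega-\omega(X)P^\flat$. The correction term $-\omega(X)P^\flat$ is a pointwise algebraic expression, manifestly additive and $\mathcal{R}$-linear in both $X$ and $\omega$, so the work amounts to tracking how this term interacts with the various scalings while the known properties of $\nabla$ supply the rest.

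For (1), additivity and $\mathcal{R}$-linearity in each argument follow at once, since both $\nabla_X\omega$ (by Theorem 6.13 in \cite{St1}) and $\omega(X)P^\flat$ are additive and $\mathcal{R}$-linear in $X$ and in $\omega$. For (2) I would use $\nabla_{fX}\omega=f\nabla_X\omega$ together with the $C^\infty(M)$-linearity $\omega(fX)=f\omega(X)$ to get
\begin{equation}
\widehat{\nabla}_{fX}\omega=\nabla_{fX}\omega-\omega(fX)P^\flat=f\nabla_X\omega-f\omega(X)P^\flat=f\widehat{\nabla}_{X}\omega.
\end{equation}
For (3), the Leibniz rule $\nabla_X(f\omega)=f\nabla_X\omega+X(f)\omega$ for $\nabla$ combined with $(f\omega)(X)=f\omega(X)$ gives
\begin{equation}
\widehat{\nabla}_X(f\omega)=\nabla_X(f\omega)-(f\omega)(X)P^\flat=f\bigl(\nabla_X\omega-\omega(X)P^\flat\bigr)+X(f)\omega=f\widehat{\nabla}_X\omega+X(f)\omega,
\end{equation}
where the term $X(f)\omega$ is untouched by the correction, which lives linearly in the form slot.

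The substantive step is (4), which I expect to be the only place where the two distinct correction terms must be combined: the one in (3.9) relating $\widehat{\nabla}$ to $\nabla$, and the one in (3.6) relating $\widehat{\nabla}^\flat$ to $\nabla^\flat$. Applying (3.9) to the one-form $Y^\flat$ and inserting $\nabla_XY^\flat=\nabla^\flat_XY$ from Theorem 6.13 in \cite{St1} together with $Y^\flat(X)=g(X,Y)$ gives $\widehat{\nabla}_XY^\flat=\nabla^\flat_XY-g(X,Y)P^\flat$. Solving (3.6) for $\nabla^\flat_XY=\widehat{\nabla}^\flat_XY-g(Y,P)X^\flat$ and substituting then yields
\begin{equation}
\widehat{\nabla}_XY^\flat=\widehat{\nabla}^\flat_XY-g(Y,P)X^\flat-g(X,Y)P^\flat,
\end{equation}
which is the claimed formula. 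The only thing to watch is consistent bookkeeping of the two correction terms and the symmetry $Y^\flat(X)=g(X,Y)$; no analytic difficulty arises, since every term is smooth on the regions of constant signature and the identities hold in $A^1_d(M)$ as required.
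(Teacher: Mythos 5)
Your proof is correct and is essentially the argument the paper intends: the paper disposes of this proposition with the phrase ``Similarly to Theorem 6.13 in \cite{St1}'', and your reduction via the decomposition $\widehat{\nabla}_X\omega=\nabla_X\omega-\omega(X)P^\flat$ (the paper's equation (3.7), which you cite as (3.9)) combined with $\widehat{\nabla}^{\flat}_XY=\nabla^{\flat}_XY+g(Y,P)X^\flat$ (the paper's (3.4), which you cite as (3.6)) is exactly the bookkeeping that phrase leaves implicit --- indeed the paper itself invokes ``(3.4) and (3.7) and Theorem 6.13 in \cite{St1}'' immediately afterwards. The only flaw is the mislabeled equation numbers; the formulas you quote are the correct ones, so the argument, including the key step (4) where $\nabla_XY^\flat=\nabla^\flat_XY$ and $Y^\flat(X)=g(X,Y)$ produce the two correction terms, goes through verbatim.
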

By (3.4) and (3.7) and Theorem 6.13 in \cite{St1}, for a radical-stationary manifold (M,g), we have
 \begin{align}
&\widehat{\nabla}_X(\widehat{\nabla}^\flat_YZ)={\nabla}_X({\nabla}^\flat_YZ)+X(g(Y,Z))Y^\flat\\\notag
&+g(Z,P){\mathcal{K}}(X,Y,\bullet)-\overline{\mathcal{K}}(Y,Z,X)P^\flat.\notag
\end{align}
 \begin{defn}
A singular semi-Riemannian manifold $(M,g)$ satisfying $\widehat{\nabla}^\flat_XY\in {\mathcal{A}}^{\bullet}(M)$ and $\widehat{\nabla}_Z(\widehat{\nabla}^\flat_XY)\in {\mathcal{A}}^{\bullet}(M)$ for any $X,Y,Z\in\Gamma(TM)$ is called a semi-symmetric non-metric  semi-regular semi-Riemannian manifold.
\end{defn}
By (3.4) and (3.8), we have
 \begin{prop} A singular semi-Riemannian manifold $(M,g)$ is a semi-regular semi-Riemannian manifold if and only if it is a semi-symmetric non-metric semi-regular semi-Riemannian manifold.
\end{prop}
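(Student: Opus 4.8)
The plan is to reduce the equivalence to a comparison of the two second-order objects via formula (3.9), exactly as Proposition 2.11 is obtained from (2.12). I first observe that the first defining condition is the same for both notions: it was already noted after (3.6) that $(M,g)$ is radical-stationary if and only if $\widehat{\nabla}^{\flat}_XY\in{\mathcal{A}}^{\bullet}(M)$, and by definition this is equivalent to $\nabla^{\flat}_XY\in{\mathcal{A}}^{\bullet}(M)$. Hence in both Definition 2.9 and Definition 3.10 the first requirement is precisely radical-stationarity, and I may assume this throughout the rest of the argument.

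Granting radical-stationarity, the task reduces to showing that $\nabla_Z(\nabla^{\flat}_XY)\in{\mathcal{A}}^{\bullet}(M)$ for all $X,Y,Z$ if and only if $\widehat{\nabla}_Z(\widehat{\nabla}^{\flat}_XY)\in{\mathcal{A}}^{\bullet}(M)$ for all $X,Y,Z$. For this I would use formula (3.9), which expresses $\widehat{\nabla}_X(\widehat{\nabla}^{\flat}_YZ)$ as $\nabla_X(\nabla^{\flat}_YZ)$ plus the three correction terms $X(g(Y,Z))Y^{\flat}$, $g(Z,P)\mathcal{K}(X,Y,\bullet)$, and $-\overline{\mathcal{K}}(Y,Z,X)P^{\flat}$. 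The key point is that each correction term lies in ${\mathcal{A}}^{\bullet}(M)$ whenever $(M,g)$ is radical-stationary: the factors $X(g(Y,Z))$, $g(Z,P)$, and $\overline{\mathcal{K}}(Y,Z,X)$ are smooth functions, the forms $Y^{\flat}=g(Y,\cdot)$ and $P^{\flat}=g(P,\cdot)$ always belong to ${\mathcal{A}}^{\bullet}(M)$ as images of the metric, and $\mathcal{K}(X,Y,\bullet)=\nabla^{\flat}_XY\in{\mathcal{A}}^{\bullet}(M)$ by radical-stationarity. Since ${\mathcal{A}}^{\bullet}(M)$ is closed under addition and under multiplication by smooth functions (its $C^{\infty}(M)$-module structure, from \cite{St2}), the entire correction $\widehat{\nabla}_X(\widehat{\nabla}^{\flat}_YZ)-\nabla_X(\nabla^{\flat}_YZ)$ lies in ${\mathcal{A}}^{\bullet}(M)$.

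It then follows immediately that $\widehat{\nabla}_X(\widehat{\nabla}^{\flat}_YZ)\in{\mathcal{A}}^{\bullet}(M)$ precisely when $\nabla_X(\nabla^{\flat}_YZ)\in{\mathcal{A}}^{\bullet}(M)$, which together with the first-condition equivalence yields the stated iff. The only delicate point, and the step I would check most carefully, is the claim that the three correction terms genuinely lie in ${\mathcal{A}}^{\bullet}(M)$ rather than merely in the larger space $A^1_d(M)$; this rests on $Y^{\flat},P^{\flat}\in{\mathcal{A}}^{\bullet}(M)$ and on ${\mathcal{A}}^{\bullet}(M)$ being a $C^{\infty}(M)$-submodule, both of which are properties of ${\mathcal{A}}^{\bullet}(M)$ recorded in \cite{St2}. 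Everything else is formal module algebra, paralleling the proof of Proposition 2.11.
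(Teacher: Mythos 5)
Your proposal is correct and is essentially the paper's own argument spelled out in full: the paper's entire proof is the citation ``By (3.4) and (3.8)'', i.e.\ the first-order relation $\widehat{\nabla}^{\flat}_XY={\nabla}^{\flat}_XY+g(Y,P)X^\flat$ equates the radical-stationarity conditions, and the second-order relation shows the two second covariant derivatives differ by terms that are smooth functions times elements of ${\mathcal{A}}^{\bullet}(M)$, which is exactly the reduction you carry out. The only slip is notational: the formula you cite as (3.9) is equation (3.8) in the paper ((3.9) is the definition of $\widehat{R}$).
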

On a semi-symmetric non-metric semi-regular semi-Riemannian manifold, we define the Riemann curvature tensor of the semi-symmetric non-metric
 covariant derivative
 \begin{equation}
\widehat{R}(X,Y,Z,T):=(\widehat{\nabla}_X\widehat{\nabla}^\flat_YZ)(T)-(\widehat{\nabla}_Y\widehat{\nabla}^\flat_XZ)(T)
-(\widehat{\nabla}^\flat_{[X,Y]}Z)(T).
\end{equation}
 Then we have
 \begin{thm} Let $(M,g)$ be a semi-symmetric non-metric semi-regular semi-Riemannian manifold, then $\widehat{R}(X,Y,Z,T)$ is smooth and $C^{\infty}(M)$-linear about $X,Y,T$ and
 \begin{align}
\widehat{R}(X,Y,fZ,T)&=f\widehat{R}(X,Y,Z,T)+X(f)g(Z,P)g(Y,T)
+X(f)g(Y,Z)g(P,T)\\\notag
&-Y(f)g(Z,P)g(X,T)-Y(f)g(X,Z)g(P,T).
\end{align}
 \end{thm}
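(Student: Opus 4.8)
The plan is to treat the three assertions of the statement—smoothness, $C^\infty(M)$-linearity in the slots $X,Y,T$, and the defect formula (3.12) in the slot $Z$—separately, all by unwinding the defining relation (3.11), $\widehat{R}(X,Y,Z,T)=(\widehat{\nabla}_X\widehat{\nabla}^\flat_Y Z)(T)-(\widehat{\nabla}_Y\widehat{\nabla}^\flat_X Z)(T)-(\widehat{\nabla}^\flat_{[X,Y]}Z)(T)$, and feeding in the Leibniz-type identities of Propositions 3.5 and 3.9. Additivity and $\mathcal{R}$-linearity in every slot is immediate from the corresponding additivity of $\widehat{\nabla}$ and $\widehat{\nabla}^\flat$, so the content is the $C^\infty$-behaviour.

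For smoothness I would proceed exactly as for the metric curvature $\overline{R}$: substitute (3.4) and (3.8) into (3.11) and rewrite $\widehat{R}(X,Y,Z,T)$, after evaluation at $T$, as $R(X,Y,Z,T)$—the Levi-Civita curvature of (2.14), which is smooth by Theorem 7.5 in \cite{St1}—plus correction terms that are polynomial in the metric $g$, the vector field $P$, scalar values $\mathcal{K}(\cdot,\cdot,\cdot)$ of the ordinary Koszul form, and derivatives $X(f)$ of smooth functions. Since $\mathcal{K}$ is $C^\infty(M)$-valued by its definition (2.1) and $R$ is smooth, every summand is a smooth function, hence so is $\widehat{R}$. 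This is the non-metric analogue of Proposition 2.15 and (2.17).

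For linearity I would handle $X$ first. Replacing $X$ by $fX$ in (3.11) and using $\widehat{\nabla}_{fX}\omega=f\widehat{\nabla}_X\omega$ (Prop 3.9(2)), $\widehat{\nabla}^\flat_{fX}Z=f\widehat{\nabla}^\flat_X Z$ (Prop 3.5(2)), the Leibniz rule $\widehat{\nabla}_Y(f\omega)=f\widehat{\nabla}_Y\omega+Y(f)\omega$ (Prop 3.9(3)), together with $[fX,Y]=f[X,Y]-Y(f)X$, the two stray contributions $Y(f)(\widehat{\nabla}^\flat_X Z)(T)$ cancel and one is left with $f\widehat{R}(X,Y,Z,T)$. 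Linearity in $Y$ then follows from the manifest antisymmetry $\widehat{R}(X,Y,Z,T)=-\widehat{R}(Y,X,Z,T)$, which is immediate from (3.11) because $[Y,X]=-[X,Y]$. Linearity in $T$ is free: each of $\widehat{\nabla}_X\widehat{\nabla}^\flat_Y Z$, $\widehat{\nabla}_Y\widehat{\nabla}^\flat_X Z$ and $\widehat{\nabla}^\flat_{[X,Y]}Z$ is an honest $1$-form, so evaluating on $fT$ simply pulls out the factor $f$.

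The genuinely non-tensorial slot is $Z$, and this is where care is needed. Expanding $\widehat{\nabla}^\flat_Y(fZ)=f\widehat{\nabla}^\flat_Y Z+Y(f)Z^\flat$ (Prop 3.5(3)) and then $\widehat{\nabla}_X\big(f\widehat{\nabla}^\flat_Y Z+Y(f)Z^\flat\big)$ by Prop 3.9(3), and likewise with $X,Y$ interchanged, the first-order terms reassemble into $f\widehat{R}(X,Y,Z,T)$, while the pure second-derivative terms collapse through $X(Y(f))-Y(X(f))-[X,Y](f)=0$. What survives is $X(f)\big[(\widehat{\nabla}^\flat_Y Z)(T)-(\widehat{\nabla}_Y Z^\flat)(T)\big]-Y(f)\big[(\widehat{\nabla}^\flat_X Z)(T)-(\widehat{\nabla}_X Z^\flat)(T)\big]$. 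Here is the whole point: unlike the metric case, where $\overline{\nabla}_X Z^\flat=\overline{\nabla}^\flat_X Z$ (Prop 2.7(4)) and the defect vanishes, the non-metric rule $\widehat{\nabla}_X Z^\flat=\widehat{\nabla}^\flat_X Z-g(Z,P)X^\flat-g(X,Z)P^\flat$ (Prop 3.9(4)) makes these differences equal to $g(Z,P)g(Y,T)+g(Y,Z)g(P,T)$ and its $X$-counterpart, which reproduces (3.12) exactly. I expect this final bookkeeping—tracking the metric-incompatibility correction of Prop 3.9(4), which is precisely what breaks $C^\infty$-linearity in $Z$ and distinguishes $\widehat{R}$ from the tensor $\overline{R}$—to be the main obstacle, together with the verification that the second-derivative terms cancel so that no further anomaly appears.
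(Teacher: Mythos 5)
Your proposal is correct and follows essentially the same route as the paper's own proof: expand $\widehat{\nabla}^\flat_Y(fZ)=f\widehat{\nabla}^\flat_YZ+Y(f)Z^\flat$, apply the Leibniz rule for $\widehat{\nabla}_X(f\omega)$, cancel the second-order terms via $X(Y(f))-Y(X(f))-[X,Y](f)=0$, and identify the surviving defect $X(f)\bigl(\widehat{\nabla}^\flat_YZ-\widehat{\nabla}_YZ^\flat\bigr)(T)-Y(f)\bigl(\widehat{\nabla}^\flat_XZ-\widehat{\nabla}_XZ^\flat\bigr)(T)$ using the metric-incompatibility rule $\widehat{\nabla}_XZ^\flat=\widehat{\nabla}^\flat_XZ-g(Z,P)X^\flat-g(X,Z)P^\flat$ (the paper's Proposition 3.7(4), your ``Prop 3.9(4)''). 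The additional arguments you supply for smoothness and for $C^\infty(M)$-linearity in $X,Y,T$ are sound and merely fill in details the paper leaves implicit.
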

\begin{proof}
\begin{align}
 \widehat{R}(X,Y,fZ,T)&=(\widehat{\nabla}_X\widehat{\nabla}^\flat_YfZ)(T)-(\widehat{\nabla}_Y\widehat{\nabla}^\flat_XfZ)(T)
-(\widehat{\nabla}^\flat_{[X,Y]}fZ)(T)\\\notag
&=\widehat{\nabla}_X(f\widehat{\nabla}^\flat_YZ+(Yf)Z^\flat)(T)-\widehat{\nabla}_Y(f\widehat{\nabla}^\flat_XZ+(Xf)Z^\flat)(T)\\\notag
&-(f\widehat{\nabla}^\flat_{[X,Y]}Z+[X,Y](f)Z^\flat)(T)\\\notag
&=f\widehat{R}(X,Y,Z,T)+X(f)(\widehat{\nabla}^\flat_YZ-\widehat{\nabla}_YZ^\flat)(T)-Y(f)(\widehat{\nabla}^\flat_XZ-\widehat{\nabla}_XZ^\flat)(T)\\
&=f\widehat{R}(X,Y,Z,T)+X(f)g(Z,P)g(Y,T)
+X(f)g(Y,Z)g(P,T)\\\notag
&-Y(f)g(Z,P)g(X,T)-Y(f)g(X,Z)g(P,T),
\notag
\end{align}
 where we applied Proposition 3.5 and Proposition 3.7.
 \end{proof}
 \indent Similarly to Proposition 8.1 in \cite{St1}, we have
\begin{prop} For any vector fields $X,Y,Z,T\in\Gamma(TM)$ on a semi-symmetric non-metric semi-regular semi-Riemannian manifold $(M,g)$
 \begin{align}
\widehat{R}(X,Y,Z,T)&=X((\widehat{\nabla}^\flat_YZ)(T))-Y((\widehat{\nabla}^\flat_XZ)(T))-(\widehat{\nabla}^\flat_{[X,Y]}Z)(T)\\\notag
&+\left<\left<\widehat{\nabla}^{\flat}_XZ,\widehat{\nabla}^{\flat}_YT\right>\right>_\bullet
-\left<\left<\widehat{\nabla}^{\flat}_YZ,\widehat{\nabla}^{\flat}_XT\right>\right>_\bullet.
\end{align}
 \begin{align}
\widehat{R}(X,Y,Z,T)&=X(\widehat{\mathcal{K}}(Y,Z,T))-Y(\widehat{\mathcal{K}}(X,Z,T))-\widehat{\mathcal{K}}([X,Y],Z,T)\\\notag
&+\widehat{\mathcal{K}}(X,Z,\bullet)\widehat{\mathcal{K}}(Y,T,\bullet)-\widehat{\mathcal{K}}(Y,Z,\bullet)\widehat{\mathcal{K}}(X,T,\bullet).
\end{align}
\end{prop}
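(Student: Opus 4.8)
The plan is to unwind the definition (3.11) of $\widehat{R}$ using the defining relation (3.7) for the semi-symmetric non-metric covariant derivative of a $1$-form, and then to translate the result into the Koszul form $\widehat{\mathcal{K}}$ via (3.2). Throughout I may treat $\widehat{\nabla}^\flat_YZ$ and $\widehat{\nabla}^\flat_XZ$ as elements of ${\mathcal{A}}^{\bullet}(M)$ and the double-bracket pairings $\left<\left<\,,\right>\right>_\bullet$ as smooth functions, since $(M,g)$ is assumed to be a semi-symmetric non-metric semi-regular semi-Riemannian manifold; this is exactly the hypothesis that makes every term below well defined, as in Proposition 8.1 of \cite{St1}.

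First I would apply (3.7) with $\omega=\widehat{\nabla}^\flat_YZ$, evaluated at $T$, to obtain
\[
(\widehat{\nabla}_X\widehat{\nabla}^\flat_YZ)(T)=X\!\left((\widehat{\nabla}^\flat_YZ)(T)\right)-\left<\left<\widehat{\nabla}^{\flat}_XT,\widehat{\nabla}^{\flat}_YZ\right>\right>_\bullet,
\]
and symmetrically, with $X$ and $Y$ interchanged,
\[
(\widehat{\nabla}_Y\widehat{\nabla}^\flat_XZ)(T)=Y\!\left((\widehat{\nabla}^\flat_XZ)(T)\right)-\left<\left<\widehat{\nabla}^{\flat}_YT,\widehat{\nabla}^{\flat}_XZ\right>\right>_\bullet.
\]
Substituting these into (3.11) and keeping the term $-(\widehat{\nabla}^\flat_{[X,Y]}Z)(T)$, I would use the symmetry of the radical-annihilator pairing $\left<\left<\,,\right>\right>_\bullet$ to rewrite the two surviving double brackets as $\left<\left<\widehat{\nabla}^{\flat}_XZ,\widehat{\nabla}^{\flat}_YT\right>\right>_\bullet-\left<\left<\widehat{\nabla}^{\flat}_YZ,\widehat{\nabla}^{\flat}_XT\right>\right>_\bullet$. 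This produces the first displayed identity (3.16).

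Finally, to obtain the second identity (3.17), I would substitute the defining relation (3.2), namely $(\widehat{\nabla}^\flat_UV)(W)=\widehat{\mathcal{K}}(U,V,W)$, into the derivative terms, and express each remaining double bracket of Koszul $1$-forms as the $\bullet$-contraction $\widehat{\mathcal{K}}(X,Z,\bullet)\,\widehat{\mathcal{K}}(Y,T,\bullet)$ and $\widehat{\mathcal{K}}(Y,Z,\bullet)\,\widehat{\mathcal{K}}(X,T,\bullet)$, which is precisely the meaning of the pairing $\left<\left<\,,\right>\right>_\bullet$ applied to $\widehat{\nabla}^\flat_XZ=\widehat{\mathcal{K}}(X,Z,\cdot)$ and $\widehat{\nabla}^\flat_YT=\widehat{\mathcal{K}}(Y,T,\cdot)$. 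The computation is essentially formal; the only point that genuinely requires care is verifying the symmetry of $\left<\left<\,,\right>\right>_\bullet$ so that the reindexing of the bracket terms in the previous step is legitimate, and this follows verbatim from the construction of ${\mathcal{A}}^{\bullet}(M)$ and the radical-annihilator inner product in \cite{St1,St2}.
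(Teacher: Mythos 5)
Your proposal is correct and takes essentially the same route as the paper: the paper offers no written-out proof, saying only ``Similarly to Proposition 8.1 in \cite{St1}, we have,'' and the proof of that cited proposition is precisely your argument --- substitute the defining formula $(\widehat{\nabla}_X\omega)(Y)=X(\omega(Y))-\left<\left<\widehat{\nabla}^{\flat}_XY,\omega\right>\right>_\bullet$ into the definition of $\widehat{R}$, use the symmetry of the radical-annihilator pairing to reorder the two bracket terms, and then rewrite everything through $(\widehat{\nabla}^\flat_UV)(W)=\widehat{\mathcal{K}}(U,V,W)$, with semi-regularity guaranteeing that every term is well defined. The only blemishes are clerical: in the paper's numbering the covariant derivative of a $1$-form is defined in (3.6) (not (3.7), which is the comparison with $\nabla_X\omega$), and the curvature tensor $\widehat{R}$ is defined in (3.9) (not (3.11)).
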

By (3.4),(3.8),(3.9) and (6) from Theorem 2.3 in \cite{St2} and some computations, we can get\\
\begin{prop} For any vector fields $X,Y,Z,T\in\Gamma(TM)$ on a semi-symmetric non-metric semi-regular semi-Riemannian manifold $(M,g)$
 \begin{align}
\widehat{R}(X,Y,Z,T)&={R}(X,Y,Z,T)+X(g(Z,P))g(Y,T)-Y(g(Z,P))g(X,T)\\\notag
&-\widehat{\mathcal{K}}(Y,Z,X)g(P,T)+\widehat{\mathcal{K}}(X,Z,Y)g(P,T).
\end{align}
\end{prop}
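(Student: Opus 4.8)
The plan is to compute the semi-symmetric non-metric curvature $\widehat{R}(X,Y,Z,T)$ directly from its definition and reduce everything to the ordinary curvature $R(X,Y,Z,T)$ of the Levi-Civita covariant derivative by substituting the relations connecting the hatted objects to the unhatted ones. The two structural identities I would lean on are (3.4), namely $\widehat{\nabla}^\flat_XY=\nabla^\flat_XY+g(Y,P)X^\flat$, and (3.8), namely $\widehat{\nabla}_X\omega=\nabla_X\omega-\omega(X)P^\flat$. These are exactly the ingredients the paper flags in the line preceding the statement, together with formula (3.9) for $\widehat{\nabla}_X(\widehat{\nabla}^\flat_YZ)$ and property (6) of Theorem 2.3 in \cite{St2}.

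First I would start from (3.10), the defining formula $\widehat{R}(X,Y,Z,T)=(\widehat{\nabla}_X\widehat{\nabla}^\flat_YZ)(T)-(\widehat{\nabla}_Y\widehat{\nabla}^\flat_XZ)(T)-(\widehat{\nabla}^\flat_{[X,Y]}Z)(T)$, and insert (3.9) for each of the two second-order terms $\widehat{\nabla}_X(\widehat{\nabla}^\flat_YZ)$ and $\widehat{\nabla}_Y(\widehat{\nabla}^\flat_XZ)$; for the bracket term I would use (3.4) to write $\widehat{\nabla}^\flat_{[X,Y]}Z=\nabla^\flat_{[X,Y]}Z+g(Z,P)[X,Y]^\flat$. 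The leading pieces $\nabla_X(\nabla^\flat_YZ)-\nabla_Y(\nabla^\flat_XZ)-\nabla^\flat_{[X,Y]}Z$ assemble, via the definition (2.14) of the ordinary curvature, into $R(X,Y,Z,T)$. What remains is to organize the correction terms carried by (3.9): the pieces involving $X(g(Y,Z))Y^\flat$ and its $Y$-swapped partner, the terms $g(Z,P)\mathcal{K}(X,Y,\bullet)$ (antisymmetrized in $X,Y$), and the terms $-\overline{\mathcal{K}}(Y,Z,X)P^\flat$ with their $Y$-swapped partners evaluated against $T$.

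The bookkeeping step is where I expect the main obstacle: several of the correction terms will not be individually in the final form, and one must recognize cancellations and regroupings. Concretely, the $X(g(Y,Z))Y^\flat$-type contributions, when antisymmetrized in $X,Y$ and paired against $T$, are designed to cancel against the part of the bracket term $g(Z,P)[X,Y]^\flat(T)$ after expanding $[X,Y](g(Z,P))$ by the Leibniz-type identity; the surviving derivative terms should collapse to $X(g(Z,P))g(Y,T)-Y(g(Z,P))g(X,T)$, which is exactly the first block of correction in (3.16). The terms $g(Z,P)\mathcal{K}(X,Y,\bullet)$ antisymmetrized in $X,Y$ are handled using property (6) of Theorem 2.3 in \cite{St2} — the antisymmetry $\mathcal{K}(X,Y,\bullet)-\mathcal{K}(Y,X,\bullet)=[X,Y]^\flat$ — so that they combine with the bracket term and disappear into the derivative block. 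Finally the $-\overline{\mathcal{K}}(Y,Z,X)P^\flat$ contributions, evaluated on $T$, yield directly $-\widehat{\mathcal{K}}(Y,Z,X)g(P,T)+\widehat{\mathcal{K}}(X,Z,Y)g(P,T)$; here one must take care that the paper's $\overline{\mathcal{K}}$ appearing in (3.9) is reconciled with the hatted Koszul form of the present section, which follows from comparing Definitions 2.1 and 3.1 against the definition of $Z^\flat$ so that the metric-correction discrepancy is absorbed into the $g(P,T)$ prefactor.

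Throughout, the manifold is assumed semi-symmetric non-metric semi-regular, so each hatted lower covariant derivative lies in $\mathcal{A}^\bullet(M)$ and the pairings $\langle\langle\,\cdot\,,\,\cdot\,\rangle\rangle_\bullet$ are well defined; I would invoke Proposition 3.11 to pass freely between the semi-regular and the semi-symmetric non-metric semi-regular hypotheses, so that all the unhatted quantities $R$, $\mathcal{K}$, $\nabla^\flat$ used in the computation are legitimate. The whole argument is a single organized expansion; no genuinely new idea is required beyond careful substitution and the antisymmetrization identity (6) of Theorem 2.3 in \cite{St2}, and the only real care is in matching the $P^\flat$-correction between the metric and non-metric conventions.
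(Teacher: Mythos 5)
Your overall route is exactly the paper's route: expand $\widehat{R}$ from its definition, substitute the expansion of $\widehat{\nabla}_X(\widehat{\nabla}^\flat_YZ)$, use (3.4) on the bracket term, reassemble the Levi-Civita pieces into $R$, and cancel the $g(Z,P)\mathcal{K}(X,Y,\bullet)$ corrections against the bracket correction $g(Z,P)g([X,Y],T)$ via property (6) of Theorem 2.3 of \cite{St2}. That last cancellation you state correctly. The gap lies in your other two bookkeeping steps, and both failures have the same source: you take the paper's printed expansion of $\widehat{\nabla}_X(\widehat{\nabla}^\flat_YZ)$ at face value, whereas that display is itself inconsistent with (3.4) and with $\widehat{\nabla}_X\omega=\nabla_X\omega-\omega(X)P^\flat$ (it contains two misprints). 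Re-deriving it --- which is the actual content of the paper's ``some computations'' --- gives
\begin{equation*}
\widehat{\nabla}_X(\widehat{\nabla}^\flat_YZ)=\nabla_X(\nabla^\flat_YZ)+X\bigl(g(Z,P)\bigr)\,Y^\flat+g(Z,P)\,\mathcal{K}(X,Y,\bullet)-\widehat{\mathcal{K}}(Y,Z,X)\,P^\flat,
\end{equation*}
with $X(g(Z,P))Y^\flat$ in place of $X(g(Y,Z))Y^\flat$ and $\widehat{\mathcal{K}}$ in place of $\overline{\mathcal{K}}$. From this corrected formula the proposition is immediate: the derivative terms produce the block $X(g(Z,P))g(Y,T)-Y(g(Z,P))g(X,T)$ with no cancellation needed, the $g(Z,P)\mathcal{K}$ terms annihilate the bracket correction by property (6), and the $P^\flat$ terms produce the $\widehat{\mathcal{K}}$ block.

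Neither of your two patches can be made to work. First, you claim the $X(g(Y,Z))Y^\flat$-type contributions cancel against ``part of'' the bracket term after expanding $[X,Y](g(Z,P))$ by Leibniz; but no term $[X,Y](g(Z,P))$ occurs anywhere in the computation --- the bracket correction is $g(Z,P)\,g([X,Y],T)$, the metric evaluated on the vector field $[X,Y]$, not a derivative along $[X,Y]$ --- and that single correction is already wholly consumed by your property-(6) cancellation, so your scheme uses it twice. Moreover $X(g(Y,Z))g(Y,T)-Y(g(X,Z))g(X,T)$ is simply a different tensor from $X(g(Z,P))g(Y,T)-Y(g(Z,P))g(X,T)$, so no regrouping recovers the stated first block. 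Second, the discrepancy between $\overline{\mathcal{K}}$ and $\widehat{\mathcal{K}}$ cannot be ``absorbed into the $g(P,T)$ prefactor'': by Definitions 2.1 and 3.1, $\overline{\mathcal{K}}(Y,Z,X)-\widehat{\mathcal{K}}(Y,Z,X)=-g(Y,Z)g(P,X)$, so exchanging one for the other changes the right-hand side by $\bigl[g(Y,Z)g(P,X)-g(X,Z)g(P,Y)\bigr]g(P,T)$, which is nonzero in general. The repair is not a patch but the recomputation displayed above; once that corrected expansion is in hand, the direct calculation you outline does prove the statement.
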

We have
\begin{equation}
\widehat{R}(X,Y,Z,T)=-\widehat{R}(Y,X,Z,T),~~\widehat{R}(X,Y,Z,T)\neq-\widehat{R}(X,Y,T,Z).
\end{equation}
By Proposition 3.6 in \cite{St2} and (3.1), we have
\begin{prop} Let $B\times_fF$ be a degenerate warped product and let the vector fields $X,Y,Z\in\Gamma(TB)$ and $U,V,W\in\Gamma(TF)$. Let
$\widehat{\mathcal{K}}$ be the semi-symmetric non-metric Koszul form on $B\times_fF$ and $\widehat{\mathcal{K}}_B$, $\widehat{\mathcal{K}}_F$
the lifts of the semi-symmetric non-metric Koszul form on $B$, respectively $F$. Let $P\in \Gamma(TB)$, then\\
\indent (1) $\widehat{\mathcal{K}}(X,Y,Z)=\widehat{\mathcal{K}}_B(X,Y,Z)$.\\
\indent  (2) $\widehat{\mathcal{K}}(X,Y,W)=\widehat{\mathcal{K}}(X,W,Y)=\widehat{\mathcal{K}}(W,X,Y)=0.$\\
\indent (3) $\widehat{\mathcal{K}}(X,V,W)=-\widehat{\mathcal{K}}(V,W,X)=fg_F(V,W)X(f).$\\
\indent (4)$\widehat{\mathcal{K}}(V,X,W)=fg_F(V,W)X(f)+f^2g_B(X,P)g_F(V,W).$\\
\indent (5) $\widehat{\mathcal{K}}(U,V,W)=f^2{\mathcal{K}}_F(U,V,W).$\\
\end{prop}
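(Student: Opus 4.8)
The plan is to reduce every identity to the corresponding statement for the ordinary Koszul form $\mathcal{K}$ of the warped product, which is already computed in Proposition 3.6 of \cite{St2}, and then to absorb the correction term coming from the defining formula (3.1). Recall that (3.1) reads $\widehat{\mathcal{K}}(A,B,C)=\mathcal{K}(A,B,C)+g(B,P)g(A,C)$, so for each of the five cases I would substitute the arguments, replace $\mathcal{K}$ by its warped-product value from Proposition 3.6 of \cite{St2}, and evaluate the extra term $g(B,P)g(A,C)$ using the explicit metric (2.22).

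The decisive structural observation is that $P\in\Gamma(TB)$ lives entirely in the base, while (2.22) makes $\Gamma(TB)$ and $\Gamma(TF)$ orthogonal and rescales the fiber metric by $f^2$. Consequently the correction term $g(B,P)g(A,C)$ can survive only when the slot occupied by $P$ is horizontal (otherwise $g(B,P)=0$) and the companion pair $(A,C)$ is either both horizontal or both vertical (otherwise $g(A,C)=0$). Running through the cases: in (1) all arguments are horizontal, so $g(Y,P)g(X,Z)=g_B(Y,P)g_B(X,Z)$ survives and combines with $\mathcal{K}(X,Y,Z)=\mathcal{K}_B(X,Y,Z)$ to give exactly $\widehat{\mathcal{K}}_B(X,Y,Z)$ by the base-manifold instance of (3.1). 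In (2) each correction term contains a factor $g(\cdot,\cdot)$ pairing a horizontal with a vertical vector, hence vanishes, and the $\mathcal{K}$-terms vanish by Proposition 3.6 of \cite{St2}, so the result is $0$. In (3) and (5) the argument in the $P$-slot is vertical, so the correction dies and only the $\mathcal{K}$-contribution remains, namely $fg_F(V,W)X(f)$ and $f^2\mathcal{K}_F(U,V,W)$ respectively.

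The one case where the correction genuinely contributes is (4): here $P$ is paired with the horizontal vector $X$, so $g(X,P)=g_B(X,P)$ need not vanish, while the companion pair $(V,W)$ is vertical, giving $g(V,W)=f^2g_F(V,W)$ by (2.22). The product is thus $f^2g_B(X,P)g_F(V,W)$, which added to $\mathcal{K}(V,X,W)=fg_F(V,W)X(f)$ reproduces the stated formula. I do not anticipate a genuine obstacle: once (3.1), Proposition 3.6 of \cite{St2}, and the metric (2.22) are in place, each identity is a short substitution. The only points requiring care are purely bookkeeping ones — keeping straight which inner products are computed with $g_B$ and which carry the warping factor $f^2$, and tracking the sign $\mathcal{K}(V,W,X)=-\mathcal{K}(V,X,W)$ from Proposition 3.6 of \cite{St2} when verifying the second equality in (3).
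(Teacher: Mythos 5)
Your proposal is correct and is exactly the paper's (one-line) proof: combine the definition (3.1) with Proposition 3.6 of \cite{St2} and the orthogonality/rescaling structure of the warped metric, the only surviving correction term being $g(X,P)g(V,W)=f^2g_B(X,P)g_F(V,W)$ in case (4). The only slip is a label: the explicit warped inner product is equation (2.20) in the paper, not (2.22).
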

Similarly, we have
\begin{prop} Let $B\times_fF$ be a degenerate warped product and let the vector fields $X,Y,Z\in\Gamma(TB)$ and $U,V,W\in\Gamma(TF)$. Let
$\widehat{\mathcal{K}}$ be the semi-symmetric non-metric Koszul form on $B\times_fF$ and $\widehat{\mathcal{K}}_B$, $\widehat{\mathcal{K}}_F$
the lifts of the semi-symmetric non-metric Koszul form on $B$, respectively $F$. Let $P\in \Gamma(TF)$, then\\
\indent (1) $\widehat{\mathcal{K}}(X,Y,Z)={\mathcal{K}}_B(X,Y,Z)$.\\
\indent  (2) $\widehat{\mathcal{K}}(X,Y,W)=\widehat{\mathcal{K}}(W,X,Y)=0.$\\
\indent (3)$\widehat{\mathcal{K}}(X,W,Y)=f^2g_B(X,Y)g_F(P,W).$\\
\indent (4) $\widehat{\mathcal{K}}(X,V,W)=\widehat{\mathcal{K}}(V,X,W)=-\widehat{\mathcal{K}}(V,W,X)=fg_F(V,W)X(f).$\\
\indent (5) $\widehat{\mathcal{K}}(U,V,W)=f^2{\mathcal{K}}_F(U,V,W)+f^4g_F(V,P)g_F(U,W).$\\
\end{prop}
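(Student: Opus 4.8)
The plan is to expand every component directly from the defining relation (3.1), $\widehat{\mathcal{K}}(A,B,C)=\mathcal{K}(A,B,C)+g(B,P)g(A,C)$, and then substitute the values of the classical Koszul form $\mathcal{K}$ supplied by Proposition 3.6 in \cite{St2} together with the explicit inner product (2.21) of the warped product. Once the relevant metric identities are recorded the whole argument becomes bookkeeping. Since $B$ and $F$ are orthogonal in $B\times_fF$, for $X,Y\in\Gamma(TB)$ and $V,W\in\Gamma(TF)$ one has $g(X,Y)=g_B(X,Y)$, $g(X,V)=0$ and $g(V,W)=f^2g_F(V,W)$; and because $P\in\Gamma(TF)$ now lies in the fiber, the pairings involving $P$ read $g(X,P)=0$ and $g(V,P)=f^2g_F(V,P)$. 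This last pair is precisely what distinguishes the present statement from Proposition 3.10, where $P\in\Gamma(TB)$.

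For parts (1)--(3), which involve at most one fiber vector, I would feed the arguments into (3.1) and read off the result. In (1) the middle slot is a base vector, so $g(Y,P)=0$ and only $\mathcal{K}(X,Y,Z)=\mathcal{K}_B(X,Y,Z)$ survives. In (2) the correction term again vanishes ($g(Y,P)=0$ in $\widehat{\mathcal{K}}(X,Y,W)$, $g(X,P)=0$ in $\widehat{\mathcal{K}}(W,X,Y)$) while the matching $\mathcal{K}$-terms are zero by Proposition 3.6 in \cite{St2}. In (3) the $\mathcal{K}$-piece $\mathcal{K}(X,W,Y)$ still vanishes, but the correction term $g(W,P)g(X,Y)=f^2g_F(P,W)g_B(X,Y)$ is nonzero because both $W$ and $P$ sit in the fiber; this is the source of the factor $f^2$.

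For parts (4) and (5) I would substitute the genuinely nonzero $\mathcal{K}$-values from Proposition 3.6 in \cite{St2}, namely $\mathcal{K}(X,V,W)=\mathcal{K}(V,X,W)=-\mathcal{K}(V,W,X)=fg_F(V,W)X(f)$ and $\mathcal{K}(U,V,W)=f^2\mathcal{K}_F(U,V,W)$. In (4) every correction term carries a base--fiber inner product and hence drops out, leaving the three expressions equal to $fg_F(V,W)X(f)$; the only care needed is the sign in $-\widehat{\mathcal{K}}(V,W,X)$, which matches the sign already built into the $\mathcal{K}$-identity. In (5) both factors of the correction term carry an $f^2$ from the fiber metric, so $g(V,P)g(U,W)=\big(f^2g_F(V,P)\big)\big(f^2g_F(U,W)\big)=f^4g_F(V,P)g_F(U,W)$, producing the claimed $f^4$ term.

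There is no genuine obstacle: the computation is mechanical and parallels the proofs of Propositions 2.15, 2.16 and 3.10. The only points demanding attention are the placement of the $f$-powers carried by the fiber metric -- which turns a single correction term into the $f^2$ of (3) and the $f^4$ of (5) -- and the consistent use of $g(X,P)=0$, coming from $P\in\Gamma(TF)$, so that none of the base-directed correction terms contribute.
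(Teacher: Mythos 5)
Your proposal is correct and follows exactly the route the paper takes: the paper derives this proposition (like its companion Proposition 3.12) directly from the definition $\widehat{\mathcal{K}}(X,Y,Z)=\mathcal{K}(X,Y,Z)+g(Y,P)g(X,Z)$ together with Proposition 3.6 of \cite{St2}, using the warped-product metric identities $g(X,V)=0$, $g(V,W)=f^2g_F(V,W)$, and $g(X,P)=0$, $g(V,P)=f^2g_F(V,P)$ for $P\in\Gamma(TF)$. Your bookkeeping of the $f^2$ and $f^4$ factors in parts (3) and (5) matches the paper's statement, so nothing is missing.
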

Using Proposition 3.12-3.14 and Theorem 5.2 in \cite{St2}, we have

\begin{thm} Let $(B,g_B)$ be a nondegenerate manifold and $(F,g_F)$ be a semi-regular manifold and $f\in C^{\infty}(B)$.
Then $B\times_fF$ is a semi regular warped product. Let the vector fields $X,Y,Z,T\in\Gamma(TB)$ and $U,V,W,Q\in\Gamma(TF)$ and let $H^f$ be the Hessian of $f$. Let $P\in\Gamma(TB)$, then\\
\indent (1) $\widehat{R}(X,Y,Z,T)=\widehat{R}_B(X,Y,Z,T)$.\\
\indent  (2) $\widehat{R}(X,Y,Z,Q)=\widehat{R}(Z,Q,X,Y)=\widehat{R}(X,Y,Q,Z)=0.$\\
\indent (3) $\widehat{R}(X,Y,W,Q)=\widehat{R}(W,Q,X,Y)=0.$\\
\indent (4) $\widehat{R}(X,V,W,T)=-fH^f(X,T)g_F(V,W)+2fX(f)g_F(V,W)g_B(P,T).$\\
\indent (5) $\widehat{R}(X,V,T,W)=fH^f(X,T)g_F(V,W)+f^2g_F(V,W)X(g_B(P,T)).$\\
\indent (6) $\widehat{R}(U,V,Z,Q)=\widehat{R}(Z,Q,U,V)=0.$\\
\indent (7) $\widehat{R}(U,V,Q,Z)=f^2g_B(P,Z)[\mathcal{K}_F(U,Q,V)-\mathcal{K}_F(V,Q,U)].$\\
\indent (8)  $\widehat{R}(U,V,W,Q)=f^2{R}_F(U,V,W,Q)+f^2g^*_B(df,df)
[g_F(U,W)g_F(V,Q)-g_F(V,W)g_F(U,Q)].$
\end{thm}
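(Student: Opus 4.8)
The plan is to mirror the proof of Theorem 2.17, with the semi-symmetric metric ingredients replaced by their non-metric counterparts. The engine of the computation is the pointwise expansion of Proposition 3.12, which I rewrite with neutral argument names $A,B,C,D\in\Gamma(TM)$ as
\begin{equation*}
\widehat{R}(A,B,C,D)=R(A,B,C,D)+A(g(C,P))g(B,D)-B(g(C,P))g(A,D)-\widehat{\mathcal{K}}(B,C,A)g(P,D)+\widehat{\mathcal{K}}(A,C,B)g(P,D).
\end{equation*}
This reduces each component of $\widehat{R}$ to a Levi-Civita curvature component $R$, two derivative terms in the function $g(\cdot,P)$, and two semi-symmetric non-metric Koszul forms $\widehat{\mathcal{K}}$. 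For each assignment of the arguments to $\Gamma(TB)$ or $\Gamma(TF)$ I would then substitute: the Levi-Civita components from Theorem 5.2 in \cite{St2}; the Koszul values from Proposition 3.13 (the case $P\in\Gamma(TB)$ relevant here); and the metric data read off from the warped-product metric of Definition 2.14, namely $g=g_B$ on base pairs, $g=f^2g_F$ on fibre pairs, $g=0$ across the two blocks, and --- crucially, since $P\in\Gamma(TB)$ --- the fact that $g(P,\cdot)$ vanishes on fibre directions while $g(P,\cdot)=g_B(P,\cdot)$ on base directions.

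The cases split by how many arguments are base versus fibre vectors. The all-base item (1) collapses at once: $R=R_B$ and $\widehat{\mathcal{K}}=\widehat{\mathcal{K}}_B$ by Proposition 3.13(1), so the expansion reassembles verbatim into $\widehat{R}_B$. The all-fibre item (8) is equally direct: because $P\in\Gamma(TB)$ is orthogonal to every fibre field, the factors $g(C,P)$ and $g(P,D)$ both vanish, killing all four correction terms, and $\widehat{R}$ collapses to the pure Levi-Civita component $R(U,V,W,Q)$ furnished by Theorem 5.2 in \cite{St2}. The mixed items (2), (3) and (6) likewise vanish once one checks that in each every correction term carries either a cross metric factor $g(\cdot,\cdot)=0$ or a fibre derivative of a base function (e.g. $U(g_B(Z,P))=0$), so that $\widehat{R}$ equals the corresponding Levi-Civita component, which is zero by the vanishing statements of Theorem 5.2 in \cite{St2}.

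The substantive work is concentrated in the genuinely mixed items (4), (5) and (7), and here lies the main obstacle: unlike $R$ or $\overline{R}$, the tensor $\widehat{R}$ is \emph{not} skew-symmetric in its last two arguments (the inequality $\widehat{R}(X,Y,Z,T)\neq-\widehat{R}(X,Y,T,Z)$ recorded just after Proposition 3.12), so (5) is not the negative of (4), nor is (7) a sign-change of (6); each transposed component must be computed on its own. The key structural observation guiding these computations is that the derivative term $A(g(C,P))$ survives precisely when the third slot $C$ is a base direction, since only then is $g(C,P)=g_B(C,P)$ a non-constant function on which $A$ acts nontrivially. This is exactly what distinguishes (5) from (4): in (4) the third argument $W$ is a fibre vector, so $g(W,P)=0$ and the derivative terms drop, leaving the Hessian contribution $-fH^f(X,T)g_F(V,W)$ together with the two Koszul terms $\widehat{\mathcal{K}}(V,W,X)=-fg_F(V,W)X(f)$ and $\widehat{\mathcal{K}}(X,W,V)=fg_F(V,W)X(f)$ (Proposition 3.13(3)), which combine into $2fX(f)g_F(V,W)g_B(P,T)$; whereas in (5) the third argument $T$ is a base vector, so the surviving term $X(g(T,P))g(V,W)$ produces the extra summand $f^2g_F(V,W)X(g_B(P,T))$. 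Item (7) is handled the same way, the two surviving Koszul terms assembling into $f^2g_B(P,Z)[\mathcal{K}_F(U,Q,V)-\mathcal{K}_F(V,Q,U)]$ via Proposition 3.13(5). Beyond this conceptual point the remaining difficulty is purely organizational: keeping the powers of $f$, the placement of the $g(P,\cdot)$ and $X(\cdot)$ factors, and all the signs straight across the transposed components.
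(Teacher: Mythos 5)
Your proposal coincides with the paper's own proof, which consists of the single line ``Using Proposition 3.12--3.14 and Theorem 5.2 in \cite{St2}, we have'' --- that is, precisely your scheme of expanding $\widehat{R}$ pointwise through Proposition 3.12 and then substituting, case by case, the warped-product Koszul values of Proposition 3.13 and the Levi-Civita components of Theorem 5.2 in \cite{St2}, together with the block structure of the warped metric; your treatment of the substantive items (4), (5), (7) and of the non-skew-symmetry of $\widehat{R}$ in its last two slots is exactly right. One minor imprecision in your gloss on the vanishing cases: in item (3), $\widehat{R}(W,Q,X,Y)=0$ does not follow because every correction term carries a null metric factor or a fibre derivative of a base function --- the two Koszul terms $-\widehat{\mathcal{K}}(Q,X,W)g_B(P,Y)$ and $+\widehat{\mathcal{K}}(W,X,Q)g_B(P,Y)$ are individually nonzero by Proposition 3.13(4) and cancel only because $g_F$ is symmetric --- but your substitution procedure produces this cancellation automatically once carried out, so the method is unaffected.
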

\begin{thm} Let $(B,g_B)$ be a nondegenerate manifold and $(F,g_F)$ be a semi-regular manifold and $f\in C^{\infty}(B)$.
Then $B\times_fF$ is a semi regular warped product. Let the vector fields $X,Y,Z,T\in\Gamma(TB)$ and $U,V,W,Q\in\Gamma(TF)$. Let $P\in\Gamma(TF)$, then\\
\indent (1) $\widehat{R}(X,Y,Z,T)={R}_B(X,Y,Z,T)$.\\
\indent  (2) $\widehat{R}(X,Y,Z,Q)=f^2g_F(P,Q)[\mathcal{K}_B(X,Z,Y)-\mathcal{K}_B(Y,Z,X)].$\\
\indent (3) $\widehat{R}(Z,Q,X,Y)=\widehat{R}(X,Y,Q,Z)=0.$\\
\indent (4) $\widehat{R}(X,Y,W,Q)=\widehat{R}(W,Q,X,Y)=0.$\\
\indent (5) $\widehat{R}(X,V,W,T)=-fH^f(X,T)g_F(V,W)-f^2V(g_F(W,P))g_B(X,T).$\\
\indent (6) $\widehat{R}(X,V,T,W)=fH^f(X,T)g_F(V,W).$\\
\indent (7) $\widehat{R}(U,V,Z,Q)=\widehat{R}(U,V,Q,Z)=0.$\\
\indent (8) $\widehat{R}(Z,Q,U,V)=f^3Z(f)g_F(Q,U)g_F(P,V).$\\
\indent (9)  $\widehat{R}(U,V,W,Q)=f^2{R}_F(U,V,W,Q)+f^2g^*_B(df,df)
[g_F(U,W)g_F(V,Q)-g_F(V,W)g_F(U,Q)]$\\
\indent $+f^4g_F(P,Q)[\mathcal{K}_F(U,W,V)-\mathcal{K}_F(V,W,U)]+f^4U(g_F(W,P))g_F(V,Q)-f^4V(g_F(W,P))g_F(U,Q).$\\
\end{thm}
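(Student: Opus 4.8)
The plan is to reduce every one of the nine identities to the master expansion of Proposition~3.12,
\begin{align*}
\widehat{R}(X,Y,Z,T)&={R}(X,Y,Z,T)+X(g(Z,P))g(Y,T)-Y(g(Z,P))g(X,T)\\
&\quad-\widehat{\mathcal{K}}(Y,Z,X)g(P,T)+\widehat{\mathcal{K}}(X,Z,Y)g(P,T),
\end{align*}
and to evaluate its four ingredients separately for each tangency pattern of $(X,Y,Z,T)$, exactly as in the proof of Theorem~3.15. For the Levi-Civita curvature ${R}$ I would quote Theorem~5.2 in \cite{St2}: the components relevant to items (2),(3),(4),(7),(8) all vanish, either because an odd number of the arguments are vertical or because $R(X,Y)W=0$ when $X,Y$ are horizontal and $W$ is vertical; the interleaved two-vertical components supply the Hessian blocks $\pm fH^f(X,T)g_F(V,W)$ of items (5),(6); the purely vertical component supplies the $f^2R_F$ and $g^*_B(df,df)$ terms of item (9); and item (1) gives $R_B$ outright. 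For the non-metric Koszul corrections I would substitute Proposition~3.14, which is precisely the table of $\widehat{\mathcal{K}}$ on $B\times_fF$ for $P\in\Gamma(TF)$; this is the only place the fibre field $P$ enters the second summand.

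The metric factors come straight from the warped-product inner product, in which horizontal and vertical vectors are orthogonal and vertical pairs are scaled by $f^2$: thus $g(\cdot,P)$ is $0$ on a horizontal slot and equals $f^2g_F(\cdot,P)$ on a vertical one, and the factor $g(P,T)$ in the Koszul corrections survives only when $T$ is vertical. The two derivative terms $X(g(Z,P))$ and $Y(g(Z,P))$ are the only genuine computation: since $P\in\Gamma(TF)$ the scalar $g(Z,P)$ vanishes when the third slot is horizontal, and when it is a vertical $W$ one has $g(W,P)=f^2g_F(W,P)$, whose derivative splits by direction, a horizontal field giving $X(f^2g_F(W,P))=2fX(f)g_F(W,P)$ and a vertical field giving $V(f^2g_F(W,P))=f^2V(g_F(W,P))$. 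Running these rules through the nine patterns and collecting terms reproduces each stated expression; for instance, in item (5) only the single derivative term $-f^2V(g_F(W,P))g_B(X,T)$ survives beside the Hessian, while in item (9) both the $f^4\mathcal{K}_F$-corrections from Proposition~3.14(5) and the $f^4U(g_F(W,P))$-type derivative terms appear.

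I expect the main difficulty to be bookkeeping rather than analysis. Because $\widehat{R}$ is antisymmetric only in its first pair and, as recorded just after Proposition~3.12, is not antisymmetric in its last pair, each reversed ordering must be expanded independently: this is why (5) and (6), (7) and (8), and the broken pair-symmetry in (2)--(3) are listed as separate identities rather than deduced from one another. The delicate point is that the two derivative contributions $X(g(Z,P))g(Y,T)$ and $-Y(g(Z,P))g(X,T)$ interact asymmetrically with the two Koszul corrections $\widehat{\mathcal{K}}(X,Z,Y)g(P,T)$ and $-\widehat{\mathcal{K}}(Y,Z,X)g(P,T)$, so one must keep careful track of which survive for a given horizontal/vertical signature and how their warping-gradient and fibre-derivative pieces combine: in some patterns the $g_B$-symmetric Koszul corrections cancel (as in item (4)), while in others they reinforce (as in item (9)). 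It is here that sign and coefficient errors are easiest to make, so the bulk of the work is a disciplined expansion of each case; once these substitution rules are fixed, all nine identities follow by direct, if lengthy, computation.
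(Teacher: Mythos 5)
Your plan is exactly the paper's own proof, which consists of the single sentence preceding Theorem 3.15: substitute the Koszul table of Proposition 3.14 and the warped-product Levi--Civita curvature (Theorem 5.2 of \cite{St2}) into the master expansion of Proposition 3.12, and read off the metric and derivative factors from $g=\pi_B^*g_B+(f\circ\pi_B)^2\pi_F^*g_F$. Your substitution rules are the right ones, and for items (1), (2), (4), (5), (6), (7) and (9) the expansion does close up exactly as you describe.

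However, the concluding claim that running these rules through the nine patterns ``reproduces each stated expression'' is false, and this is precisely the step you never exhibit. Apply Proposition 3.12 to the pattern $(X,Y,Q,Z)$ of item (3): the curvature term vanishes ($R(X,Y,Q,Z)=-R(X,Y,Z,Q)=0$) and the two Koszul corrections vanish (they carry the factor $g(P,Z)=0$), but the derivative terms survive, because $g(Q,P)=f^2g_F(Q,P)$ genuinely depends on $f$; one gets
\begin{equation*}
\widehat{R}(X,Y,Q,Z)=X(g(Q,P))g(Y,Z)-Y(g(Q,P))g(X,Z)
=2fg_F(P,Q)\left[X(f)g_B(Y,Z)-Y(f)g_B(X,Z)\right],
\end{equation*}
which is not $0$. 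A coordinate check confirms this: on $\mathbb{R}^3$ with $g=dt^2+ds^2+f(t)^2dx^2$ and $P=\partial_x$, the paper's own definitions give $\widehat{R}(\partial_t,\partial_s,\partial_x,\partial_s)=\partial_t\bigl(\widehat{\mathcal{K}}(\partial_s,\partial_x,\partial_s)\bigr)=\partial_t(f^2)=2ff'$. Likewise in item (8) the same expansion yields
\begin{equation*}
\widehat{R}(Z,Q,U,V)=2f^3Z(f)\left[g_F(P,U)g_F(Q,V)+g_F(P,V)g_F(Q,U)\right],
\end{equation*}
the derivative term contributing $2f^3Z(f)g_F(U,P)g_F(Q,V)$ and the two Koszul corrections together contributing $2f^3Z(f)g_F(Q,U)g_F(P,V)$, not the printed $f^3Z(f)g_F(Q,U)g_F(P,V)$. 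So the machinery you invoke is the correct machinery (it is the paper's), but a faithful execution of your plan disproves the second equality of (3) and corrects (8) rather than proving them; asserting that the bookkeeping closes without actually running it is exactly where the gap lies, both in your proposal and in the paper.
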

\section{Koszul forms associated to the almost product structure and their curvature of semi-regular almost product warped products}
 \begin{defn}
A singular semi-Riemannian manifold $(M,g)$ is called a almost product singular semi-Riemannian manifold if there exists a almost product
structure $J:TM\rightarrow TM$ satisfying $J^2=id$ and $g(JX,JY)=g(X,Y)$ for any $X,Y\in\Gamma(TM)$.
\end{defn}
When $(M,g,J)$ is a non-degenerate almost product manifolds, we define the canonical connection $\widetilde{\nabla}_XY=\frac{1}{2}\nabla_XY+\frac{1}{2}J\nabla_XJY$ where $\nabla$ is the Levi-Civita connection. Then the canonical connection
satisfies $\widetilde{\nabla}(g)=0$ and  $\widetilde{\nabla}(J)=0$. Then
\begin{equation}
g(\widetilde{\nabla}_XY,Z)=\frac{1}{2}g(\nabla_XY,Z)+\frac{1}{2}g(J\nabla_XJY,Z)=\frac{1}{2}g(\nabla_XY,Z)+\frac{1}{2}g(\nabla_XJY,JZ).
\end{equation}
By (4.1), we define
\begin{defn}
Almost product Koszul forms $\widetilde{\mathcal{K}}:\Gamma(TM)^3\rightarrow C^{\infty}(M)$ on $(M,g,J)$ is defined as
\begin{equation}
\widetilde{\mathcal{K}}(X,Y,Z):=\frac{1}{2}[\mathcal{K}(X,Y,Z)+\mathcal{K}(X,JY,JZ)].
\end{equation}
\end{defn}
\begin{thm}
Properties of the almost product Koszul form of a singular almost product semi-Riemannian manifold $(M,g)$:\\
\indent (1) Additivity and $\mathcal{R}$-linearity in each of its arguments.\\
\indent  (2) $\widetilde{\mathcal{K}}(fX,Y,Z)=f\widetilde{\mathcal{K}}(X,Y,Z).$\\
\indent (3) $\widetilde{\mathcal{K}}(X,fY,Z)=f\widetilde{\mathcal{K}}(X,Y,Z)+X(f)\left<Y,Z\right>.$\\
\indent (4)$\widetilde{\mathcal{K}}(X,Y,fZ)=f\widetilde{\mathcal{K}}(X,Y,Z).$\\
\indent (5)$\widetilde{\mathcal{K}}(X,Y,Z)+\widetilde{\mathcal{K}}(X,Z,Y)=X\left<Y,Z\right>.$\\
\indent (6)$\widetilde{\mathcal{K}}(X,Y,Z)-\widetilde{\mathcal{K}}(Y,X,Z)=\frac{1}{2}\left<[X,Y],Z\right>
+\frac{1}{2}[\mathcal{K}(X,JY,JZ)-\mathcal{K}(Y,JX,JZ)]$.\\
\indent (7)$\widetilde{\mathcal{K}}(X,JY,JZ)=\widetilde{\mathcal{K}}(X,Y,Z)$.\\
\end{thm}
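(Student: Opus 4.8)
The plan is to reduce every identity to the corresponding property of the ordinary Koszul form $\mathcal{K}$ (Theorem 2.3 in \cite{St2}) by unfolding the defining average $\widetilde{\mathcal{K}}(X,Y,Z)=\frac12[\mathcal{K}(X,Y,Z)+\mathcal{K}(X,JY,JZ)]$ and treating the two summands separately throughout. Three structural facts about the almost product structure carry all the extra work: that $J$ is a $(1,1)$-tensor field, so $J(fY)=fJY$ for $f\in C^\infty(M)$; the compatibility $g(JX,JY)=g(X,Y)$, equivalently $\langle JY,JZ\rangle=\langle Y,Z\rangle$; and $J^2=\mathrm{id}$.

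For (1), additivity and $\mathcal{R}$-linearity are inherited termwise from those of $\mathcal{K}$ together with the $\mathcal{R}$-linearity of $J$. For (2) and (4) I would simply pull the scalar $f$ out of the first, respectively third, slot of each $\mathcal{K}$ term, using $J(fZ)=fJZ$ in the $JZ$-slot for (4); no metric input is required. Property (3) is the one place where $J$-invariance of the metric enters: writing $J(fY)=fJY$, the Leibniz rule for $\mathcal{K}$ in its middle argument produces two derivative terms $\frac12 X(f)\langle Y,Z\rangle$ and $\frac12 X(f)\langle JY,JZ\rangle$, and these collapse to the single term $X(f)\langle Y,Z\rangle$ precisely because $\langle JY,JZ\rangle=\langle Y,Z\rangle$.

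For the symmetry identities I would again split. In (5), the pair $\mathcal{K}(X,Y,Z)+\mathcal{K}(X,Z,Y)$ gives $X\langle Y,Z\rangle$ and the pair $\mathcal{K}(X,JY,JZ)+\mathcal{K}(X,JZ,JY)$ gives $X\langle JY,JZ\rangle=X\langle Y,Z\rangle$, so the average is $X\langle Y,Z\rangle$. In (6), antisymmetrizing the first terms yields $\frac12\langle[X,Y],Z\rangle$ via property (6) of $\mathcal{K}$, while the $J$-twisted terms are left untouched, producing exactly the stated remainder $\frac12[\mathcal{K}(X,JY,JZ)-\mathcal{K}(Y,JX,JZ)]$. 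Finally (7) is immediate from $J^2=\mathrm{id}$: substituting $JY,JZ$ for $Y,Z$ in the definition turns the second term into $\frac12\mathcal{K}(X,J^2Y,J^2Z)=\frac12\mathcal{K}(X,Y,Z)$ and keeps the first as $\frac12\mathcal{K}(X,JY,JZ)$, recovering $\widetilde{\mathcal{K}}(X,Y,Z)$.

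There is no serious obstacle; the computation is routine once the defining average is expanded. The only points requiring more than bookkeeping are the two places where the hypotheses on $J$ are genuinely used: metric compatibility $\langle JY,JZ\rangle=\langle Y,Z\rangle$, which makes the Leibniz term in (3) and the symmetrization in (5) come out cleanly, and $J^2=\mathrm{id}$, which underlies (7). Keeping the two summands of $\widetilde{\mathcal{K}}$ separate and only recombining at the end is the organizational device that keeps the argument transparent.
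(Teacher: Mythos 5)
Your proof is correct and takes essentially the same approach as the paper: expand the defining average $\widetilde{\mathcal{K}}=\frac{1}{2}[\mathcal{K}(X,Y,Z)+\mathcal{K}(X,JY,JZ)]$ and apply the properties of $\mathcal{K}$ (Theorem 2.3 in \cite{St2}) to each summand, invoking $\left<JY,JZ\right>=\left<Y,Z\right>$ and $J^2=\mathrm{id}$ exactly where you do. The paper only writes out (5) and (6) explicitly and declares the rest similar; your computations for those two cases coincide with its proof, and your handling of the remaining items fills in the "similarly" in the intended way.
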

\begin{proof}
By Definition 4.2 and Theorem 2.3 in \cite{St2}, we can get this theorem. We prove (5) and (6) and other properties hold similarly.
\begin{align}
&\widetilde{\mathcal{K}}(X,Y,Z)+\widetilde{\mathcal{K}}(X,Z,Y)\\\notag
=&\frac{1}{2}[\mathcal{K}(X,Y,Z)+\mathcal{K}(X,JY,JZ)+\mathcal{K}(X,Z,Y)+\mathcal{K}(X,JZ,JY)]\\\notag
=&\frac{1}{2}[X(g(Y,Z))+X(g(JZ,JY))]\\\notag
=&X(g(Y,Z)).
\notag
\end{align}
\begin{align}
&\widetilde{\mathcal{K}}(X,Y,Z)-\widetilde{\mathcal{K}}(Y,X,Z)\\\notag
=&\frac{1}{2}[\mathcal{K}(X,Y,Z)-\mathcal{K}(Y,X,Z)+\mathcal{K}(X,JY,JZ)-\mathcal{K}(Y,JX,JZ)]\\\notag
=&\frac{1}{2}\left<[X,Y],Z\right>
+\frac{1}{2}[\mathcal{K}(X,JY,JZ)-\mathcal{K}(Y,JX,JZ)].
\end{align}
\end{proof}
\begin{defn}
Let $X,Y\in\Gamma(TM).$ The almost product lower covariant derivative of $Y$ in the direction of $X$ as the differential $1$-form
$\widetilde{\nabla}^{\flat}_XY\in A^1(M)$
\begin{equation}
\widetilde{\nabla}^{\flat}_XY(Z):=\widetilde{\mathcal{K}}(X,Y,Z),
\end{equation}
for any $Z\in \Gamma(TM)$.
\end{defn}
By Theorem 4.3 and Definition 4.4, we get
\begin{prop}$\widetilde{\nabla}^{\flat}_XY$ has the following properties:\\
\indent (1) Additivity and $\mathcal{R}$-linearity in each of its arguments.\\
\indent  (2) $\widetilde{\nabla}^{\flat}_{fX}Y=f\widetilde{\nabla}^{\flat}_XY.$\\
\indent (3) $\widetilde{\nabla}^{\flat}_X(fY)=f\widetilde{\nabla}^{\flat}_XY+X(f)Y^\flat.$\\
\indent (4)$(\widetilde{\nabla}^{\flat}_{X}Y)(Z)+(\widetilde{\nabla}^{\flat}_{X}Z)(Y)=X\left<Y,Z\right>.$\\
\indent (5)$\widetilde{\nabla}^{\flat}_{X}Y(Z)-\widetilde{\nabla}^{\flat}_{Y}X(Z)=\frac{1}{2}\left<[X,Y],Z\right>
+\frac{1}{2}[\mathcal{K}(X,JY,JZ)-\mathcal{K}(Y,JX,JZ)]$\\
\indent (6)$(\widetilde{\nabla}^{\flat}_{X}(JY))(JZ)=(\widetilde{\nabla}^{\flat}_{X}Y)(Z)$.\\
\end{prop}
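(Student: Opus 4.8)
The plan is to read every assertion off the defining identity $\widetilde{\nabla}^{\flat}_XY(Z)=\widetilde{\mathcal{K}}(X,Y,Z)$ of Definition 4.4 and translate it into the corresponding statement about the almost product Koszul form $\widetilde{\mathcal{K}}$ proved in Theorem 4.3. Since $\widetilde{\nabla}^{\flat}_XY$ is nothing but the $1$-form $Z\mapsto\widetilde{\mathcal{K}}(X,Y,Z)$ --- this being a genuine $1$-form precisely because $\widetilde{\mathcal{K}}$ is $C^\infty$-linear in its third slot, i.e.\ part (4) of Theorem 4.3 --- each property of $\widetilde{\nabla}^{\flat}$ is equivalent to a property of $\widetilde{\mathcal{K}}$ in its three arguments, and no genuinely new computation is required.

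I would then dispatch the items in order. Property (1) is immediate from the additivity and $\mathcal{R}$-linearity of $\widetilde{\mathcal{K}}$ in each argument, which is Theorem 4.3(1). For (2), evaluating on an arbitrary $Z$ gives $\widetilde{\nabla}^{\flat}_{fX}Y(Z)=\widetilde{\mathcal{K}}(fX,Y,Z)=f\widetilde{\mathcal{K}}(X,Y,Z)=f\widetilde{\nabla}^{\flat}_XY(Z)$ by Theorem 4.3(2). For (3), the same evaluation together with Theorem 4.3(3) yields $\widetilde{\mathcal{K}}(X,fY,Z)=f\widetilde{\mathcal{K}}(X,Y,Z)+X(f)\langle Y,Z\rangle$, and here one only has to recognize the last term as $X(f)Y^\flat(Z)$ via the musical identification $Y^\flat(Z)=\langle Y,Z\rangle$.

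Properties (4), (5) and (6) follow the same template from parts (5), (6) and (7) of Theorem 4.3 respectively: for (4) one adds to obtain $\widetilde{\mathcal{K}}(X,Y,Z)+\widetilde{\mathcal{K}}(X,Z,Y)=X\langle Y,Z\rangle$; for (5) one subtracts to land on $\tfrac{1}{2}\langle[X,Y],Z\rangle+\tfrac{1}{2}[\mathcal{K}(X,JY,JZ)-\mathcal{K}(Y,JX,JZ)]$; and for (6) the almost product invariance $\widetilde{\mathcal{K}}(X,JY,JZ)=\widetilde{\mathcal{K}}(X,Y,Z)$ gives $(\widetilde{\nabla}^{\flat}_X(JY))(JZ)=(\widetilde{\nabla}^{\flat}_XY)(Z)$ at once. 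I do not expect any real obstacle: the entire mathematical content already sits in Theorem 4.3, so the only care needed is bookkeeping --- matching each slot of $\widetilde{\mathcal{K}}$ to the correct argument of $\widetilde{\nabla}^{\flat}$ and keeping the flat-musical identification straight in (3).
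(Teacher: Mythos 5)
Your proposal is correct and follows exactly the paper's route: the paper derives this proposition directly from Theorem 4.3 together with Definition 4.4, which is precisely your slot-by-slot translation, with your mapping of items (4), (5), (6) to parts (5), (6), (7) of Theorem 4.3 (and part (4) guaranteeing $\widetilde{\nabla}^{\flat}_XY$ is a genuine $1$-form) matching the intended argument. No gaps.
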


\begin{prop}
A singular almost product manifold $(M,g,J)$ is radical-stationary, then $\widetilde{\nabla}^{\flat}_XY\in{\mathcal{A}}^{\bullet}(M)$
\end{prop}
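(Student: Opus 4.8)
The plan is to rewrite the almost product lower covariant derivative in terms of the ordinary lower covariant derivative and then invoke radical-stationarity together with the compatibility of $J$ with the metric. From Definition 4.2 and 4.4, and the identity $\mathcal{K}(X,JY,JZ)=(\nabla^\flat_X(JY))(JZ)$, I would first record that, as $1$-forms in the final slot,
\[
\widetilde{\nabla}^{\flat}_X Y=\tfrac12\,\nabla^{\flat}_X Y+\tfrac12\,J^*\big(\nabla^{\flat}_X(JY)\big),
\]
where $J^*$ denotes the pullback $(J^*\omega)(Z):=\omega(JZ)$. Thus the claim reduces to showing that each of the two summands lies in $\mathcal{A}^{\bullet}(M)$.

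The second step is to check that $J^*$ preserves $\mathcal{A}^{\bullet}(M)$. The key computation is that for any $W\in\Gamma(TM)$ one has $J^*(W^{\flat})=(JW)^{\flat}$: indeed, using $J^2=\mathrm{id}$ and $g(JA,JB)=g(A,B)$,
\[
(J^*W^{\flat})(Z)=g(W,JZ)=g(JW,J^2Z)=g(JW,Z).
\]
Pointwise this shows that $J_p^*$ carries $\flat_p(T_pM)$ into itself; equivalently, $J$ maps the radical $\Gamma_0(TM)$ into itself (if $g(W,\cdot)=0$ then $g(JW,Z)=g(W,JZ)=0$ for all $Z$), so $J^*$ respects the radical-annihilator condition. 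Since $J$ is moreover a smooth bundle map, the smoothness conditions in the definition of $\mathcal{A}^{\bullet}(M)$ are preserved as well, whence $J^*\big(\mathcal{A}^{\bullet}(M)\big)\subseteq\mathcal{A}^{\bullet}(M)$.

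Finally, because $(M,g,J)$ is radical-stationary and $J$ is smooth, $JY$ is again a smooth vector field, so both $\nabla^{\flat}_X Y$ and $\nabla^{\flat}_X(JY)$ lie in $\mathcal{A}^{\bullet}(M)$ by the very definition of radical-stationary. Applying Step 2 to the second of these and using that $\mathcal{A}^{\bullet}(M)$ is closed under addition and multiplication by scalars (in particular by $\tfrac12$), the displayed combination lies in $\mathcal{A}^{\bullet}(M)$, which is exactly $\widetilde{\nabla}^{\flat}_X Y\in\mathcal{A}^{\bullet}(M)$.

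I expect the main obstacle to be the verification in Step 2 at points where the signature of $g$, and hence the dimension of the radical, is not locally constant: there one must ensure that $J^*$ respects not merely the pointwise annihilator of $\Gamma_0(TM)$ but the full structure encoded in the definition of $\mathcal{A}^{\bullet}(M)$ on page 3 of \cite{St2}. Since $J$ is a fiberwise isometric automorphism preserving $\Gamma_0(TM)$, it induces an automorphism of the relevant data and the argument should go through; this is the point that requires care rather than the routine algebra of the reduction.
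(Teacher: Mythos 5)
Your proof is correct and is essentially the paper's own argument in dual form: the paper shows $J$ preserves the radical $\Gamma_0(TM)$ (via $g(JZ,W)=g(Z,JW)=0$) and then evaluates $\widetilde{\mathcal{K}}(X,Y,\cdot)=\frac{1}{2}[\mathcal{K}(X,Y,\cdot)+\mathcal{K}(X,JY,J\cdot)]$ on radical vectors, while your Step 2 (that $J^*$ preserves $\mathcal{A}^{\bullet}(M)$, i.e.\ $J^*(W^\flat)=(JW)^\flat$) is exactly the dual statement of that same lemma, proved by the same computation, with both proofs then invoking radical-stationarity for the two pairs $(X,Y)$ and $(X,JY)$. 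Your closing worry about points of non-constant signature is already resolved by your own pointwise verification, since membership in $\mathcal{A}^{\bullet}(M)$ is just the pointwise condition of taking values in $\flat_p(T_pM)$ together with smoothness of the $1$-form, both of which your argument checks directly.
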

\begin{proof}
Let $Z\in\Gamma_0(TM)$ and $W\in\Gamma(TM)$. By $g(JZ,W)=g(J^2Z,JW)=g(Z,JW)=0$, then $JZ\in\Gamma_0(TM)$. Since $(M,g,J)$ is radical-stationary
then $\mathcal{K}(X,Y,Z)=0$ for $X,Y\in\Gamma(TM)$ and $Z\in\Gamma_0(TM)$. So by Definition 4.2, we have $\widetilde{\mathcal{K}}(X,Y,Z)=0$ for $X,Y\in\Gamma(TM)$ and $Z\in\Gamma_0(TM)$. By Definition 4.4, we prove this proposition.
\end{proof}
\begin{defn}
Let $X\in\Gamma(TM)$, $\omega\in {\mathcal{A}}^{\bullet}(M)$, where $(M,g)$ is radical-stationary. The almost product covariant derivative of $\omega$ in
 the direction $X$ is defined as
\begin{equation}\widetilde{\nabla}:\Gamma(TM)\times {\mathcal{A}}^{\bullet}(M)\rightarrow A^1_d(M),~
(\widetilde{\nabla}_X\omega)(Y):=X(\omega(Y))-\left<\left<\widetilde{\nabla}^{\flat}_XY,\omega\right>\right>_\bullet.
\end{equation}
\end{defn}
Similarly to Theorem 6.13 in \cite{St1}, we have
\begin{prop}The almost product covariant derivative $\widetilde{\nabla}$ has the following properties:\\
\indent (1) Additivity and $\mathcal{R}$-linearity in each of its arguments.\\
\indent  (2) $\widetilde{\nabla}_{fX}\omega=f\widetilde{\nabla}_{X}\omega.$\\
\indent (3) $\widetilde{\nabla}_X(f\omega)=f\widetilde{\nabla}_X\omega+X(f)\omega.$\\
\indent (4)$\widetilde{\nabla}_{X}Y^\flat=\widetilde{\nabla}^\flat_{X}Y.$\\
\end{prop}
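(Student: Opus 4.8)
The plan is to follow verbatim the scheme of Theorem 6.13 in \cite{St1}, exactly as was done for Proposition 2.7 and Proposition 3.7 in the present paper: each property is deduced directly from the defining formula of Definition 4.7, namely $(\widetilde{\nabla}_X\omega)(Y)=X(\omega(Y))-\left<\left<\widetilde{\nabla}^{\flat}_XY,\omega\right>\right>_\bullet$, by invoking the corresponding property of the lower covariant derivative $\widetilde{\nabla}^{\flat}$ from Proposition 4.5 together with the $\mathcal{R}$-bilinearity of the pairing $\left<\left<\cdot,\cdot\right>\right>_\bullet$. The hypothesis that $(M,g,J)$ is radical-stationary guarantees $\widetilde{\nabla}^{\flat}_XY\in\mathcal{A}^{\bullet}(M)$ by Proposition 4.6, so every pairing below is well defined.

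For (1), additivity and $\mathcal{R}$-linearity in each slot follow because $X(\omega(Y))$ is additive and $\mathcal{R}$-linear in $X$, in $\omega$ and in $Y$, while the second term inherits the same behaviour from the $\mathcal{R}$-linearity of $\widetilde{\nabla}^{\flat}_XY$ in $X$ and $Y$ (Proposition 4.5(1)) and the bilinearity of $\left<\left<\cdot,\cdot\right>\right>_\bullet$. For (2), I would write $(\widetilde{\nabla}_{fX}\omega)(Y)=(fX)(\omega(Y))-\left<\left<\widetilde{\nabla}^{\flat}_{fX}Y,\omega\right>\right>_\bullet$, use $(fX)(\omega(Y))=fX(\omega(Y))$ together with Proposition 4.5(2) to factor $f$ out of both terms. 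For (3), I would expand $X((f\omega)(Y))=X(f)\omega(Y)+fX(\omega(Y))$ by the Leibniz rule and use $\left<\left<\widetilde{\nabla}^{\flat}_XY,f\omega\right>\right>_\bullet=f\left<\left<\widetilde{\nabla}^{\flat}_XY,\omega\right>\right>_\bullet$; collecting terms yields $X(f)\omega(Y)+f(\widetilde{\nabla}_X\omega)(Y)$, which is the claim.

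The one substantive step is (4). Evaluating on $Z\in\Gamma(TM)$, Definition 4.7 gives $(\widetilde{\nabla}_XY^{\flat})(Z)=X(Y^{\flat}(Z))-\left<\left<\widetilde{\nabla}^{\flat}_XZ,Y^{\flat}\right>\right>_\bullet$. Here I would use the defining compatibility of the pairing on $\mathcal{A}^{\bullet}(M)$, namely $\left<\left<\eta,Y^{\flat}\right>\right>_\bullet=\eta(Y)$ for any $\eta\in\mathcal{A}^{\bullet}(M)$, so that the second term becomes $(\widetilde{\nabla}^{\flat}_XZ)(Y)=\widetilde{\mathcal{K}}(X,Z,Y)$, while the first term is $X\left<Y,Z\right>$. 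Thus $(\widetilde{\nabla}_XY^{\flat})(Z)=X\left<Y,Z\right>-\widetilde{\mathcal{K}}(X,Z,Y)$, and the metric-type symmetry (5) of Theorem 4.3, $\widetilde{\mathcal{K}}(X,Y,Z)+\widetilde{\mathcal{K}}(X,Z,Y)=X\left<Y,Z\right>$, collapses this to $\widetilde{\mathcal{K}}(X,Y,Z)=(\widetilde{\nabla}^{\flat}_XY)(Z)$, proving (4). I expect (4) to be the main obstacle, since it is the only part requiring the metric compatibility encoded in Theorem 4.3(5); this is precisely why the correction terms present in the non-metric case (Proposition 3.7(4)) are absent here, the canonical connection being metric.
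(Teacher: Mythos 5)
Your proof is correct and takes essentially the same route as the paper, which proves this proposition simply by invoking the analogy with Theorem 6.13 of \cite{St1}: definition-chasing from Definition 4.7 for (1)--(3), and for (4) the pairing identity $\left<\left<\eta,Y^\flat\right>\right>_\bullet=\eta(Y)$ for $\eta\in\mathcal{A}^{\bullet}(M)$ combined with the metric property Theorem 4.3(5). Your filled-in details (and the remark explaining why the correction terms of Proposition 3.7(4) disappear here) are accurate and consistent with the paper's scheme.
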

\begin{defn}
A singular almost product semi-Riemannian manifold $(M,g,J)$ satisfying $\widetilde{\nabla}^\flat_XY\in {\mathcal{A}}^{\bullet}(M)$ and $\widetilde{\nabla}_Z(\overline{\nabla}^\flat_XY)\in {\mathcal{A}}^{\bullet}(M)$ for any $X,Y,Z\in\Gamma(TM)$ is called a almost product semi-regular semi-Riemannian manifold.
\end{defn}
Similarly to Proposition 6.23 in \cite{St1}, we have
 \begin{prop} A almost product radical stationary semi-Riemannian manifold $(M,g,J)$ is a almost product semi-regular semi-Riemannian manifold if and only if $\widetilde{\mathcal{K}}(X,Y,\bullet)\widetilde{\mathcal{K}}(Z,T,\bullet)\in C^{\infty}(M).$
\end{prop}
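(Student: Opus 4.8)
The plan is to follow the proof of Proposition 6.23 in \cite{St1}, replacing the Koszul form $\mathcal{K}$ by its almost product analogue $\widetilde{\mathcal{K}}$ throughout. Since $(M,g,J)$ is radical-stationary, Proposition 4.6 already guarantees $\widetilde{\nabla}^\flat_XY\in\mathcal{A}^\bullet(M)$ for all $X,Y\in\Gamma(TM)$, so the first requirement in Definition 4.9 is automatic. The entire content of the equivalence therefore lies in deciding when the second requirement $\widetilde{\nabla}_Z(\widetilde{\nabla}^\flat_XY)\in\mathcal{A}^\bullet(M)$ holds.

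First I would expand this second derivative using Definition 4.7 with $\omega=\widetilde{\nabla}^\flat_XY$, evaluating on an arbitrary $T\in\Gamma(TM)$:
\begin{equation*}
(\widetilde{\nabla}_Z(\widetilde{\nabla}^\flat_XY))(T)=Z(\widetilde{\mathcal{K}}(X,Y,T))-\left<\left<\widetilde{\nabla}^\flat_ZT,\widetilde{\nabla}^\flat_XY\right>\right>_\bullet,
\end{equation*}
where the contraction is precisely the quantity $\widetilde{\mathcal{K}}(X,Y,\bullet)\widetilde{\mathcal{K}}(Z,T,\bullet)$ appearing in the statement. By Theorem 4.3 the function $\widetilde{\mathcal{K}}(X,Y,T)$ is smooth, hence so is its derivative $Z(\widetilde{\mathcal{K}}(X,Y,T))$. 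Consequently the full expression is smooth in $T$ if and only if the contraction term is smooth, and this single observation yields both directions of the equivalence at the level of smoothness: the forward direction extracts smoothness of the contraction from semi-regularity, while the reverse direction feeds smoothness of the contraction back into the identity.

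It remains to confirm that smoothness is the only obstruction to membership in $\mathcal{A}^\bullet(M)$, namely that whenever $\widetilde{\nabla}_Z(\widetilde{\nabla}^\flat_XY)$ is smooth it automatically lies in the image of the flat map. For this I would verify radical-annihilation directly. Taking $T=W\in\Gamma_0(TM)$, property (5) of Theorem 4.3 together with $\langle W,\cdot\rangle=0$ gives $\widetilde{\mathcal{K}}(Z,W,S)=-\widetilde{\mathcal{K}}(Z,S,W)$ for every $S$, and radical-stationarity (via Proposition 4.6) forces $\widetilde{\mathcal{K}}(Z,S,W)=0$; hence $\widetilde{\nabla}^\flat_ZW=0$ and the contraction vanishes. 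Combined with $\widetilde{\mathcal{K}}(X,Y,W)=0$ this shows $(\widetilde{\nabla}_Z(\widetilde{\nabla}^\flat_XY))(W)=0$, so the $1$-form annihilates $\Gamma_0(TM)$, which on a radical-stationary manifold places it in $\mathcal{A}^\bullet(M)$ as soon as it is smooth.

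The main obstacle is the technical point underlying this last step: matching the abstract characterization of $\mathcal{A}^\bullet(M)$ from \cite{St1} with the pointwise computation, so that ``smooth and radical-annihilating'' genuinely coincides with ``a smooth section of the image of $\flat$.'' Once this identification is in place, the biconditional follows immediately from the displayed identity, since the only term whose smoothness is in question is exactly $\widetilde{\mathcal{K}}(X,Y,\bullet)\widetilde{\mathcal{K}}(Z,T,\bullet)$.
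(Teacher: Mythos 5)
Your overall strategy coincides with the paper's: the paper's entire proof of this proposition is the single sentence ``Similarly to Proposition 6.23 in \cite{St1}, we have\dots'', i.e.\ adapt Stoica's argument with $\mathcal{K}$ replaced by $\widetilde{\mathcal{K}}$, and that is what you do. Your central identity
\begin{equation*}
(\widetilde{\nabla}_Z(\widetilde{\nabla}^{\flat}_XY))(T)=Z(\widetilde{\mathcal{K}}(X,Y,T))-\widetilde{\mathcal{K}}(Z,T,\bullet)\,\widetilde{\mathcal{K}}(X,Y,\bullet)
\end{equation*}
is correct (it is Definition 4.7 applied to $\omega=\widetilde{\nabla}^{\flat}_XY$), the first requirement of Definition 4.9 is indeed discharged by Proposition 4.6, and the observation that $Z(\widetilde{\mathcal{K}}(X,Y,T))$ is automatically smooth, so that smoothness of $\widetilde{\nabla}_Z(\widetilde{\nabla}^{\flat}_XY)$ is equivalent to smoothness of the contraction, is the heart of the intended proof.

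The gap is in your final step: the claim that a smooth $1$-form annihilating $\Gamma_0(TM)$ thereby lies in $\mathcal{A}^{\bullet}(M)$. Membership in $\mathcal{A}^{\bullet}(M)$ is a \emph{pointwise} condition, $\omega_p\in\flat(T_pM)$ at every $p$, equivalently $\omega_p(w)=0$ for every vector $w\in T_pM$ with $g_p(w,\cdot)=0$; whereas $\Gamma_0(TM)$ consists of vector fields lying in the radical \emph{at all points simultaneously}. These are far apart in exactly the situations this paper targets: when the metric degenerates only on a small set (e.g.\ the zero set of a warping function), any smooth field that is everywhere radical vanishes on the dense nondegenerate region, hence everywhere, so $\Gamma_0(TM)=\{0\}$ and your check is vacuous precisely where the radical is nontrivial. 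Concretely, on $M=\mathbb{R}^2$ with $g=x^2(dx^2+dy^2)$ and $J=\mathrm{id}$ (a radical-stationary almost product manifold), the form $dy$ is smooth and annihilates $\Gamma_0(TM)=\{0\}$, yet at points with $x=0$ the radical is all of $T_pM$, so $\flat(T_pM)=\{0\}$ and $dy\notin\mathcal{A}^{\bullet}(M)$. So ``smooth and annihilating $\Gamma_0(TM)$'' does not place a form in $\mathcal{A}^{\bullet}(M)$. To verify the pointwise condition you must evaluate on a vector $w$ in the radical at a single point $p$; its extensions $W$ need not stay in the radical, and then the two terms in your displayed identity no longer vanish separately---their cancellation is the actual content. (On a region of constant signature one can take $W$ to be a local section of the radical distribution and your computation goes through verbatim; at signature-jumping points, which are exactly the warped-product singularities, a separate argument is needed.) You flag this identification as ``the main obstacle'' and leave it unresolved, so the proposal is incomplete at the one step that distinguishes the pointwise space $\mathcal{A}^{\bullet}(M)$ from global radical-annihilation; in fairness, the paper gives no details here either, deferring entirely to \cite{St1}.
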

\begin{ex} Let $(M,\widetilde{g},J)$ be a non-degenerate almost product semi-Riemannian manifold and $\Omega\in C^{\infty}(M)$ and $\Omega\geq 0$, then $(M,g=\Omega^2\widetilde{g},J)$ is a almost product semi-regular semi-Riemannian manifold. This is similar to the proof of Theorem 9.3 in \cite{St1}.
\end{ex}
On a almost product semi-regular semi-Riemannian manifold, we define the Riemann curvature tensor of the almost product
 covariant derivative
 \begin{equation}
\widetilde{R}(X,Y,Z,T):=(\widetilde{\nabla}_X\widetilde{\nabla}^\flat_YZ)(T)-(\widetilde{\nabla}_Y\widetilde{\nabla}^\flat_XZ)(T)
-(\widetilde{\nabla}^\flat_{[X,Y]}Z)(T).
\end{equation}
 Then similarly to Theorem 7.5 in \cite{St1}, we have
 \begin{thm} Let $(M,g)$ be a almost product semi-regular semi-Riemannian manifold, then $\widetilde{R}(X,Y,Z,T)$ is a smooth $(0,4)$-tensor field.
 \end{thm}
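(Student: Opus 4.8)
The plan is to follow the template of Theorem 7.5 in \cite{St1}, exactly as in the semi-symmetric metric case (Theorem 2.11): establish smoothness first, then $C^{\infty}(M)$-linearity in each of the four arguments separately. I would begin by expanding the definition (4.5) with Definition 4.7 to obtain the working expression
\begin{align*}
\widetilde{R}(X,Y,Z,T)&=X(\widetilde{\mathcal{K}}(Y,Z,T))-Y(\widetilde{\mathcal{K}}(X,Z,T))-\widetilde{\mathcal{K}}([X,Y],Z,T)\\
&\quad+\widetilde{\mathcal{K}}(X,Z,\bullet)\widetilde{\mathcal{K}}(Y,T,\bullet)-\widetilde{\mathcal{K}}(Y,Z,\bullet)\widetilde{\mathcal{K}}(X,T,\bullet),
\end{align*}
where the last two terms abbreviate the contractions $\langle\langle\widetilde{\nabla}^{\flat}_{X}Z,\widetilde{\nabla}^{\flat}_{Y}T\rangle\rangle_\bullet$ and $\langle\langle\widetilde{\nabla}^{\flat}_{Y}Z,\widetilde{\nabla}^{\flat}_{X}T\rangle\rangle_\bullet$, the exact analogue of the formulas used for $\overline{R}$ in the metric case. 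Smoothness is then read off immediately: the first three terms are directional derivatives of the functions $\widetilde{\mathcal{K}}(\cdot,\cdot,\cdot)\in C^{\infty}(M)$ furnished by Theorem 4.3, and the last two are precisely the products $\widetilde{\mathcal{K}}(\cdot,\cdot,\bullet)\widetilde{\mathcal{K}}(\cdot,\cdot,\bullet)$ that Proposition 4.10 certifies to be smooth on an almost product semi-regular manifold.

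For tensoriality I would treat the slots in turn. In the first slot, substituting $fX$ and invoking $\widetilde{\nabla}_{fX}\omega=f\widetilde{\nabla}_{X}\omega$ and $\widetilde{\nabla}^{\flat}_{fX}Z=f\widetilde{\nabla}^{\flat}_{X}Z$ (Propositions 4.8(2) and 4.5(2)), the Leibniz rule $\widetilde{\nabla}_{Y}(f\omega)=f\widetilde{\nabla}_{Y}\omega+Y(f)\omega$ (Proposition 4.8(3)), and the identity $[fX,Y]=f[X,Y]-Y(f)X$, all the $Y(f)$-corrections cancel and leave $f\widetilde{R}(X,Y,Z,T)$. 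Linearity in the second slot is then free, since the antisymmetry $\widetilde{R}(X,Y,Z,T)=-\widetilde{R}(Y,X,Z,T)$ follows at once from (4.5) and $[Y,X]=-[X,Y]$. Linearity in the fourth slot is likewise routine: each $\widetilde{\nabla}_{X}\widetilde{\nabla}^{\flat}_{Y}Z$ lies in $A^1_d(M)$ and is $C^{\infty}(M)$-linear in its evaluation argument (a consequence of Definition 4.7 and the behaviour of $\langle\langle\,,\rangle\rangle_\bullet$), so the alternating sum (4.5) is tensorial in $T$ by construction; equivalently, substituting $fT$ into the working formula and applying Theorem 4.3(4) together with Proposition 4.5(3) yields correction terms $\pm X(f)\widetilde{\mathcal{K}}(Y,Z,T)$ and $\pm Y(f)\widetilde{\mathcal{K}}(X,Z,T)$ that cancel in pairs.

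The delicate slot, and the main obstacle, is the third. Substituting $fZ$ and applying Proposition 4.5(3) gives $\widetilde{\nabla}^{\flat}_{Y}(fZ)=f\widetilde{\nabla}^{\flat}_{Y}Z+Y(f)Z^{\flat}$; pushing $\widetilde{\nabla}_{X}$ through via Proposition 4.8(3) produces, among other terms, the troublesome $Y(f)\widetilde{\nabla}_{X}Z^{\flat}$. The whole computation hinges on the clean relation $\widetilde{\nabla}_{X}Z^{\flat}=\widetilde{\nabla}^{\flat}_{X}Z$ of Proposition 4.8(4): with it, the first-order corrections $X(f)(\widetilde{\nabla}^{\flat}_{Y}Z)(T)$ and $Y(f)(\widetilde{\nabla}^{\flat}_{X}Z)(T)$ cancel against their counterparts, while the three surviving second-order terms assemble into $[X(Y(f))-Y(X(f))-[X,Y](f)]\,Z^{\flat}(T)=0$, leaving $f\widetilde{R}(X,Y,Z,T)$. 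I would stress that this is exactly the step at which the almost product case parallels the metric case (Proposition 2.7(4), Theorem 2.11) and diverges from the non-metric case: the defect $-g(Y,P)X^{\flat}-g(X,Y)P^{\flat}$ in Proposition 3.7(4) is precisely what spoils third-slot linearity there and forces the anomalous terms of Theorem 3.8, whereas its absence here makes $\widetilde{R}$ a genuine smooth $(0,4)$-tensor field.
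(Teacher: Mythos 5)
Your proposal is correct and takes essentially the same route the paper intends: the paper proves this theorem merely by invoking the analogy with Theorem 7.5 of \cite{St1}, and your slot-by-slot verification (smoothness via the Koszul-form expansion and Proposition 4.10, first/second/fourth-slot linearity by the Leibniz cancellations, and third-slot linearity hinging on $\widetilde{\nabla}_{X}Z^{\flat}=\widetilde{\nabla}^{\flat}_{X}Z$ from Proposition 4.8(4)) is precisely that argument spelled out, including the apt structural contrast with the non-metric case where Proposition 3.7(4) has a defect and Theorem 3.10 acquires anomalous terms. The only blemishes are trivial reference slips (the curvature is defined in (4.7), not (4.5); the anomalous non-metric statement is Theorem 3.10, not 3.8), which do not affect the mathematics.
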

Similarly to Proposition 8.1 in \cite{St1}, we have
\begin{prop} For any vector fields $X,Y,Z,T\in\Gamma(TM)$ on a almost product semi-regular semi-Riemannian manifold $(M,g,J)$
 \begin{align}
\widetilde{R}(X,Y,Z,T)&=X((\widetilde{\nabla}^\flat_YZ)(T))-Y((\widetilde{\nabla}^\flat_XZ)(T))-(\widetilde{\nabla}^\flat_{[X,Y]}Z)(T)\\\notag
&+\left<\left<\widetilde{\nabla}^{\flat}_XZ,\widetilde{\nabla}^{\flat}_YT\right>\right>_\bullet
-\left<\left<\widetilde{\nabla}^{\flat}_YZ,\widetilde{\nabla}^{\flat}_XT\right>\right>_\bullet.
\end{align}
 \begin{align}
\widetilde{R}(X,Y,Z,T)&=X(\widetilde{\mathcal{K}}(Y,Z,T))-Y(\widetilde{\mathcal{K}}(X,Z,T))-\widetilde{\mathcal{K}}([X,Y],Z,T)\\\notag
&+\widetilde{\mathcal{K}}(X,Z,\bullet)\widetilde{\mathcal{K}}(Y,T,\bullet)-\widetilde{\mathcal{K}}(Y,Z,\bullet)\widetilde{\mathcal{K}}(X,T,\bullet).
\end{align}
\end{prop}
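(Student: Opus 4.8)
The plan is to run a direct computation parallel to Proposition 8.1 in \cite{St1}, which is the same template already used above for $\overline{\mathcal{K}}$ and $\widehat{\mathcal{K}}$. First I would expand the definition (4.6) of $\widetilde{R}$ by applying the definition (4.5) of the almost product covariant derivative $\widetilde{\nabla}$ to each of its first two terms, regarding $\widetilde{\nabla}^\flat_YZ$ (resp.\ $\widetilde{\nabla}^\flat_XZ$) as the $1$-form $\omega$ on which $\widetilde{\nabla}$ acts. This produces
\begin{equation}
(\widetilde{\nabla}_X\widetilde{\nabla}^\flat_YZ)(T)=X((\widetilde{\nabla}^\flat_YZ)(T))-\left<\left<\widetilde{\nabla}^{\flat}_XT,\widetilde{\nabla}^{\flat}_YZ\right>\right>_\bullet,
\end{equation}
together with the corresponding identity obtained by interchanging $X$ and $Y$.

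Substituting both expansions into (4.6) collects the three derivation-type terms $X((\widetilde{\nabla}^\flat_YZ)(T))$, $-Y((\widetilde{\nabla}^\flat_XZ)(T))$ and $-(\widetilde{\nabla}^\flat_{[X,Y]}Z)(T)$ into the first line of (4.7). The two remaining bilinear terms are then rearranged using the symmetry of the pairing $\left<\left<\cdot,\cdot\right>\right>_\bullet$ on $\mathcal{A}^\bullet(M)$: since the expansion of the $Y$-term enters with an overall minus sign, the two contributions read $-\left<\left<\widetilde{\nabla}^{\flat}_XT,\widetilde{\nabla}^{\flat}_YZ\right>\right>_\bullet+\left<\left<\widetilde{\nabla}^{\flat}_YT,\widetilde{\nabla}^{\flat}_XZ\right>\right>_\bullet$, and applying symmetry to each factor turns this into $\left<\left<\widetilde{\nabla}^{\flat}_XZ,\widetilde{\nabla}^{\flat}_YT\right>\right>_\bullet-\left<\left<\widetilde{\nabla}^{\flat}_YZ,\widetilde{\nabla}^{\flat}_XT\right>\right>_\bullet$, which is exactly the second line of (4.7). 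To reach (4.8) I would then rewrite every lower covariant derivative through Definition 4.4, so that $(\widetilde{\nabla}^\flat_YZ)(T)=\widetilde{\mathcal{K}}(Y,Z,T)$ and likewise for the other terms, and re-express each pairing of two $1$-forms as the $\bullet$-contraction $\widetilde{\mathcal{K}}(\cdot,\cdot,\bullet)\widetilde{\mathcal{K}}(\cdot,\cdot,\bullet)$; this is purely a change of notation and yields (4.8).

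The only genuinely delicate point is the legitimacy of these steps in the singular setting. The definition (4.5) of $\widetilde{\nabla}$ applies only to $1$-forms lying in $\mathcal{A}^\bullet(M)$, and the pairing $\left<\left<\cdot,\cdot\right>\right>_\bullet$ together with the symmetry used above is available only there; thus I must know that each form $\widetilde{\nabla}^\flat_YZ$, $\widetilde{\nabla}^\flat_XT$, and so on, belongs to $\mathcal{A}^\bullet(M)$ and that the relevant $\bullet$-products are smooth. Both facts are supplied by the standing hypothesis that $(M,g,J)$ is almost product semi-regular (Definition 4.9) together with Proposition 4.6, while the smoothness of the resulting $(0,4)$-tensor is already guaranteed by Theorem 4.12. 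Once these memberships are in place the argument is entirely formal and identical in shape to the metric and non-metric cases treated earlier, so no further obstacle arises.
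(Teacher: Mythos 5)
Your proposal is correct and coincides with the paper's own route: the paper proves this proposition simply by invoking Proposition 8.1 of \cite{St1}, whose argument is exactly your expansion of (4.6) via the definition (4.5) of $\widetilde{\nabla}$ acting on the $1$-forms $\widetilde{\nabla}^\flat_YZ$, $\widetilde{\nabla}^\flat_XZ$, followed by the symmetry of $\left<\left<\cdot,\cdot\right>\right>_\bullet$ and the notational translation $(\widetilde{\nabla}^\flat_YZ)(T)=\widetilde{\mathcal{K}}(Y,Z,T)$. Your attention to the membership $\widetilde{\nabla}^\flat_XY\in\mathcal{A}^{\bullet}(M)$ guaranteed by the semi-regularity hypothesis is precisely the point that legitimizes the formal computation in the singular setting, so nothing is missing.
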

We let $J=(J_1,J_2):B\times_fF\rightarrow B\times_fF$ be the almost product structure on $B\times_fF$ and $J_1:TB\rightarrow TB$,
$J_2:TF\rightarrow TF$ and $J^2_1=id_{TB}$, $J^2_2=id_{TF}$.
By Proposition 3.6 in \cite{St2} and Definition 4.2, we have
\begin{prop} Let $B\times_fF$ be a degenerate warped product and let the vector fields $X,Y,Z\in\Gamma(TB)$ and $U,V,W\in\Gamma(TF)$. Let
$\widetilde{\mathcal{K}}$ be the almost product Koszul form on $B\times_fF$ and $\widetilde{\mathcal{K}}_B$, $\widetilde{\mathcal{K}}_F$
the lifts of the almost product Koszul form on $B$, respectively $F$. Then\\
\indent (1) $\widetilde{\mathcal{K}}(X,Y,Z)=\widetilde{\mathcal{K}}_B(X,Y,Z)$.\\
\indent  (2) $\widetilde{\mathcal{K}}(X,Y,W)=\widetilde{\mathcal{K}}(X,W,Y)=\widetilde{\mathcal{K}}(W,X,Y)=0.$\\
\indent (3) $\widetilde{\mathcal{K}}(X,V,W)=fg_F(V,W)X(f).$\\
\indent (4)$\widetilde{\mathcal{K}}(V,X,W)=-\widetilde{\mathcal{K}}(V,W,X)=\frac{1}{2}[fg_F(V,W)X(f)+f(JX)(f)g_F(V,JW)].$\\
\indent (5) $\widetilde{\mathcal{K}}(U,V,W)=f^2\widetilde{\mathcal{K}}_F(U,V,W).$\\
\end{prop}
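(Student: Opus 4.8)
The plan is to expand the defining relation of the almost product Koszul form in Definition 4.2, namely $\widetilde{\mathcal{K}}(A,B,C)=\frac{1}{2}[\mathcal{K}(A,B,C)+\mathcal{K}(A,JB,JC)]$, and to evaluate each of the two ordinary Koszul forms on the right by Proposition 3.6 in \cite{St2}. The structural fact that makes everything work is that $J=(J_1,J_2)$ is block diagonal: for a base field $X\in\Gamma(TB)$ one has $JX=J_1X\in\Gamma(TB)$, and for a fiber field $W\in\Gamma(TF)$ one has $JW=J_2W\in\Gamma(TF)$. Consequently $J$ never mixes base and fiber directions, so both terms $\mathcal{K}(A,B,C)$ and $\mathcal{K}(A,JB,JC)$ fall into exactly the same case of Proposition 3.6 in \cite{St2} as the untransformed triple. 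I will also use repeatedly that the compatibility $g(JX,JY)=g(X,Y)$ forces $J_1$ to be a $g_B$-isometry and $J_2$ to be a $g_F$-isometry.

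For (1), (2) and (5) the computation is then immediate. In (1) both $\mathcal{K}(X,Y,Z)$ and $\mathcal{K}(X,J_1Y,J_1Z)$ are $\mathcal{K}_B$-values, and averaging reproduces the definition of $\widetilde{\mathcal{K}}_B$. In (2) each of the three triples has a single fiber slot among base slots (or a single base slot among fiber slots), so both terms vanish by the vanishing case of Proposition 3.6 in \cite{St2}, regardless of $J$. In (5) both terms equal $f^2$ times a value of $\mathcal{K}_F$, and averaging reproduces $f^2\widetilde{\mathcal{K}}_F(U,V,W)$.

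For (3) I would write $\widetilde{\mathcal{K}}(X,V,W)=\frac{1}{2}[\mathcal{K}(X,V,W)+\mathcal{K}(X,J_2V,J_2W)]$, use $\mathcal{K}(X,V,W)=fg_F(V,W)X(f)$, and note that the second term equals $fg_F(J_2V,J_2W)X(f)$; the isometry identity $g_F(J_2V,J_2W)=g_F(V,W)$ makes the two terms coincide, so the factor $\frac{1}{2}$ cancels the doubling. Here $J$ acts only on fiber vectors, so the warping-direction derivative $X(f)$ is untouched.

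The one place that genuinely needs care, and which I expect to be the main (if mild) obstacle, is (4), because there $J$ acts on the base vector sitting in the middle slot. Writing $\widetilde{\mathcal{K}}(V,X,W)=\frac{1}{2}[\mathcal{K}(V,X,W)+\mathcal{K}(V,J_1X,J_2W)]$, the first term is $fg_F(V,W)X(f)$, while in the second term the warping-direction derivative becomes $(J_1X)(f)=(JX)(f)$ rather than $X(f)$ and the fiber metric picks up $g_F(V,J_2W)=g_F(V,JW)$. There is no isometry cancellation now, since only one fiber entry is rotated, so this produces the asymmetric extra term and gives $\widetilde{\mathcal{K}}(V,X,W)=\frac{1}{2}[fg_F(V,W)X(f)+f(JX)(f)g_F(V,JW)]$. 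To obtain the second equality in (4) I would expand $\widetilde{\mathcal{K}}(V,W,X)$ the same way, using $\mathcal{K}(V,W,X)=-fg_F(V,W)X(f)$ and $\mathcal{K}(V,J_2W,J_1X)=-fg_F(V,J_2W)(J_1X)(f)$; these are exactly the negatives of the two terms above, whence $-\widetilde{\mathcal{K}}(V,W,X)=\widetilde{\mathcal{K}}(V,X,W)$.
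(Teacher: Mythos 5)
Your proposal is correct and follows exactly the paper's own route: the paper proves this proposition simply by citing Definition 4.2 together with Proposition 3.6 in \cite{St2}, which is precisely the expansion $\widetilde{\mathcal{K}}(A,B,C)=\frac{1}{2}[\mathcal{K}(A,B,C)+\mathcal{K}(A,JB,JC)]$ evaluated case by case using the block structure $J=(J_1,J_2)$ that you carry out. Your write-up merely supplies the details (the isometry cancellation in (3), the asymmetric term in (4)) that the paper leaves implicit.
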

\begin{thm}
Let $(B,g_B,J_1)$ and $(F,g_F,J_2)$ be two almost product radical-stationary semi-Riemannian manifolds and $f\in C^{\infty}(B)$ so that
$df\in {\mathcal{A}}^{\bullet}(B)$, Then the warped product manifold $(B\times _fF,J)$ is a almost product radical-stationary semi-Riemannian manifold.
\end{thm}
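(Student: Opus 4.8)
The plan is to verify separately the two ingredients hidden in the phrase \emph{almost product radical-stationary semi-Riemannian manifold}: first that $J=(J_1,J_2)$ is a genuine almost product structure compatible with the warped metric (Definition 4.1), and then that the underlying singular metric is radical-stationary, so that Proposition 4.6 applies. First I would dispatch the algebraic part. Since $J$ acts componentwise, $J^2=(J_1^2,J_2^2)=(\mathrm{id}_{TB},\mathrm{id}_{TF})=\mathrm{id}$ on $T(B\times_fF)$. For compatibility I would write a tangent vector as a pair, note $d\pi_B(Jx)=J_1\,d\pi_B(x)$ and $d\pi_F(Jx)=J_2\,d\pi_F(x)$, and substitute into the explicit warped inner product (2.22); using $g_B(J_1\,d\pi_B(x),J_1\,d\pi_B(y))=g_B(d\pi_B(x),d\pi_B(y))$ and the analogous identity for $g_F$, the warping factor $f^2$ multiplies the fiber term on both sides and we get $g(Jx,Jy)=g(x,y)$. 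This identifies $(B\times_fF,J)$ as an almost product singular semi-Riemannian manifold.

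For radical-stationarity I would show $\nabla^\flat_XY\in\mathcal{A}^\bullet(B\times_fF)$ for all $X,Y$. Because $\mathcal{K}$ is tensorial in its first argument and satisfies $\mathcal{K}(X,fY,\cdot)=f\mathcal{K}(X,Y,\cdot)+X(f)Y^\flat$ with $Y^\flat\in\mathcal{A}^\bullet$ always, and because $\mathcal{A}^\bullet$ is a $C^\infty$-module, it suffices to run through the cases of Proposition 3.6 in \cite{St2} on lifts of vector fields from $B$ and from $F$. The relevant components are $\mathcal{K}(X,Y,Z)=\mathcal{K}_B(X,Y,Z)$, the mixed terms proportional to $fg_F(V,W)X(f)$, the vanishing term $\mathcal{K}(X,Y,W)=0$, and the fiber term $f^2\mathcal{K}_F(U,V,W)$. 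To test membership I would identify the radical of the warped product: away from the zero set of $f$ it is $\mathrm{Rad}(g_B)\oplus\mathrm{Rad}(g_F)$, while on $\{f=0\}$ it swells to $\mathrm{Rad}(g_B)\oplus TF$. Evaluating each component on a radical vector $W$, the horizontal piece annihilates $\mathrm{Rad}(g_B)$ because $B$ is radical-stationary, the fiber and mixed pieces carry an explicit factor of $f$ and so vanish along $\{f=0\}$ (and reduce to multiples of $\mathcal{K}_F$ away from it, where $F$ is radical-stationary), and $\mathcal{K}(X,Y,W)=0$ identically; hence $\nabla^\flat_XY$ annihilates $\Gamma_0$ everywhere.

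The genuine obstacle is smoothness across $\{f=0\}$, where the rank of the radical jumps and a pointwise-defined annihilating form need not assemble into a smooth section of the bundle $T^\bullet(B\times_fF)$. This is exactly where the hypotheses must be spent: radical-stationarity of $B$ and of $F$ controls the $\mathcal{K}_B$ and $\mathcal{K}_F$ contributions, while the assumption $df\in\mathcal{A}^\bullet(B)$ is what guarantees that the factors $X(f)=df(X)$ in the mixed components produce smooth sections rather than forms with uncontrolled behaviour at the singular locus. I expect the careful bookkeeping here — tracking how the $f$-weighted terms extend smoothly through the zero set — to be the only nontrivial step, and to mirror the corresponding argument in \cite{St2}.

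Having shown that $J$ is a compatible almost product structure and that $\nabla^\flat_XY\in\mathcal{A}^\bullet(B\times_fF)$, Proposition 4.6 upgrades this to $\widetilde{\nabla}^\flat_XY\in\mathcal{A}^\bullet(B\times_fF)$, so $(B\times_fF,J)$ is an almost product radical-stationary semi-Riemannian manifold, as claimed.
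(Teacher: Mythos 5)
Your route is genuinely different from the paper's: you reduce everything to the ordinary Koszul form --- proving that $B\times_fF$ is radical-stationary in the sense of Theorem 4.1 of \cite{St2} --- and only at the very end convert to the almost product setting via Proposition 4.6. The paper never passes through $\mathcal{K}$ at all: it computes the components of $\widetilde{\mathcal{K}}$ on the warped product (Proposition 4.14) and checks directly, following the proof of Theorem 4.1 in \cite{St2}, that each one annihilates the radical; the single new ingredient is that $J_1$ preserves $\Gamma_0(TB)$ (as in the proof of Proposition 4.6), so $(J_1X)(f)=(df)(J_1X)=0$ for radical $X$, which disposes of the extra term $\frac{1}{2}f(J_1X)(f)g_F(V,J_2W)$ in Proposition 4.14(4) --- a term your route never has to confront.

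The gap in your argument is at the start, not at the end. You invoke ordinary radical-stationarity of the factors (``the horizontal piece annihilates $\Gamma_0(TB)$ because $B$ is radical-stationary,'' and likewise for $\mathcal{K}_F$). But the hypothesis is that $(B,g_B,J_1)$ and $(F,g_F,J_2)$ are \emph{almost product} radical-stationary, and as that notion operates in Section 4 (it is the first condition in Definition 4.9, the hypothesis of Proposition 4.10, and the property that Theorem 4.16 extracts from ``almost product semi-regular'' in order to apply the present theorem), it is the condition $\widetilde{\nabla}^\flat_XY\in\mathcal{A}^{\bullet}(M)$ --- a condition on $\widetilde{\mathcal{K}}$, not on $\mathcal{K}$. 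These are not interchangeable: since $\widetilde{\mathcal{K}}(X,Y,Z)=\frac{1}{2}[\mathcal{K}(X,Y,Z)+\mathcal{K}(X,JY,JZ)]$ and $\widetilde{\mathcal{K}}(X,JY,JZ)=\widetilde{\mathcal{K}}(X,Y,Z)$ (Theorem 4.3(7)), the form $\widetilde{\mathcal{K}}$ records only the $J$-symmetrized part of $\mathcal{K}$, so its vanishing on the radical does not force $\mathcal{K}$ to vanish there. Proposition 4.6 converts only in the direction you use at the end (ordinary $\Rightarrow$ almost product); it cannot supply the ordinary radical-stationarity of $B$ and $F$ that Theorem 4.1 of \cite{St2} needs as input. (If instead one reads ``almost product radical-stationary'' as ``almost product structure plus ordinary radical-stationarity,'' your proof is correct and is a tidy shortcut --- but then the later application in Theorem 4.16 becomes problematic, which is presumably why the paper argues directly on $\widetilde{\mathcal{K}}$, using only the $\widetilde{\mathcal{K}}$-conditions on the factors.) The repair is straightforward: run your radical bookkeeping (radical equal to $\Gamma_0(TB)\oplus\Gamma_0(TF)$ off $\{f=0\}$, swelling to $\Gamma_0(TB)\oplus TF$ on it, with the explicit $f$-factors killing the fiber terms there) on the components of Proposition 4.14 rather than on those of Proposition 3.6 of \cite{St2}, adding the observation $(J_1X)(f)=0$ for $X\in\Gamma_0(TB)$. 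A final small point: for radical-stationarity the only issue is pointwise annihilation of the rank-jumping radical, since $\widetilde{\mathcal{K}}(X,Y,\cdot)$ is automatically a smooth $1$-form; the ``smoothness across $\{f=0\}$'' that you single out as the main obstacle really belongs to the semi-regularity statement (Theorem 4.16), not to this one.
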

\begin{proof}
Since $df\in {\mathcal{A}}^{\bullet}(B)$, for $X\in\Gamma_0(TB)$, then $JX\in\Gamma_0(TB)$ and $(JX)(f)=(df)(JX)=0.$ Using Proposition 4.14 and similarly to the proof of Theorem 4.1 in \cite{St2}, we can prove this theorem.
\end{proof}
\begin{thm}
Let $(B,g_B,J_1)$ be a almost product non-degererate semi-Riemannian manifold and $(F,g_F,J_2)$ be a almost product semi-regular semi-Riemannian manifolds and $f\in C^{\infty}(B)$, Then the warped product manifold $(B\times _fF,J)$ is a almost product semi-regular semi-Riemannian manifold.
\end{thm}
\begin{proof}
By Theorem 4.15 and Proposition 4.10, we only show $\widetilde{\mathcal{K}}(X,Y,\bullet)\widetilde{\mathcal{K}}(Z,T,\bullet)\in C^{\infty}(M).$
We will denote the covariant contraction with $\bullet$ on $B\times_fF$, and with $\bullet^B$, $\bullet^F$ denote the covariant contraction on $B$, respectively $F$. Let $X_B,Y_B,Z_B,T_B\in\Gamma(TB)$, $X_F,Y_F,Z_F,T_F\in\Gamma(TF)$.  We assume that $f>0$, then
\begin{align}
\widetilde{\mathcal{K}}(X_B,Y_B,\bullet)\widetilde{\mathcal{K}}(Z_B,T_B,\bullet)&=
\widetilde{\mathcal{K}}(X_B,Y_B,\bullet^B)\widetilde{\mathcal{K}}(Z_B,T_B,\bullet^B)\\\notag
&+\frac{1}{f^2}\widetilde{\mathcal{K}}(X_B,Y_B,\bullet^F)\widetilde{\mathcal{K}}(Z_B,T_B,\bullet^F)\\\notag
&=\widetilde{\mathcal{K}}_B(X_B,Y_B,\bullet^B)\widetilde{\mathcal{K}}_B(Z_B,T_B,\bullet^B)\in C^{\infty}(B).
\end{align}
where we used (1) (2) in Proposition 4.14. By Proposition 4.14, we can get similarly
\begin{align}
\widetilde{\mathcal{K}}(X_B,Y_B,\bullet)\widetilde{\mathcal{K}}(Z_F,T_B,\bullet)=
\widetilde{\mathcal{K}}(X_B,Y_B,\bullet)\widetilde{\mathcal{K}}(T_B,Z_F,\bullet)=0,
\end{align}
\begin{align}
\widetilde{\mathcal{K}}(X_B,Y_B,\bullet)\widetilde{\mathcal{K}}(Z_F,T_F,\bullet)&=
-\frac{1}{2}fg^*_B(\widetilde{\nabla}^\flat_{X_B}Y_B,df)g_F(Z_F,T_F)\\\notag
&-\frac{1}{2}fg^*_B(\widetilde{\nabla}^\flat_{X_B}Y_B,(df)J)g_F(Z_F,JT_F).
\end{align}
\begin{align}
\widetilde{\mathcal{K}}(X_B,Y_F,\bullet)\widetilde{\mathcal{K}}(Z_B,T_F,\bullet)&=
X_B(f)Z_B(f)g_F(T_F,Y_F).
\end{align}
\begin{align}
\widetilde{\mathcal{K}}(X_B,Y_F,\bullet)\widetilde{\mathcal{K}}(T_F,Z_B,\bullet)&=
\frac{1}{2}X_B(f)Z_B(f)g_F(T_F,Y_F)\\\notag
&+\frac{1}{2}X_B(f)(JZ_B)(f)g_F(T_F,JY_F).
\end{align}
\begin{align}
\widetilde{\mathcal{K}}(Y_F,X_B,\bullet)\widetilde{\mathcal{K}}(T_F,Z_B,\bullet)&=
\frac{1}{4}X_B(f)Z_B(f)g_F(T_F,Y_F)\\\notag
&+\frac{1}{4}X_B(f)(JZ_B)(f)g_F(T_F,JY_F)\\\notag
&+\frac{1}{4}(JX_B)(f)Z_B(f)g_F(T_F,JY_F)\\\notag
&+\frac{1}{4}(JX_B)(f)(JZ_B)(f)g_F(T_F,Y_F).
\end{align}
\begin{align}
\widetilde{\mathcal{K}}(X_B,Y_F,\bullet)\widetilde{\mathcal{K}}(Z_F,T_F,\bullet)=
fX_B(f)\widetilde{\mathcal{K}}_F(Z_F,T_F,Y_F).
\end{align}
\begin{align}
\widetilde{\mathcal{K}}(Y_F,X_B,\bullet)\widetilde{\mathcal{K}}(Z_F,T_F,\bullet)&=
\frac{1}{2}fX_B(f)\widetilde{\mathcal{K}}_F(Z_F,T_F,Y_F)\\\notag
&+\frac{1}{2}f(JX_B)(f)\widetilde{\mathcal{K}}_F(Z_F,T_F,JY_F).
\end{align}
\begin{align}
\widetilde{\mathcal{K}}(X_F,Y_F,\bullet)\widetilde{\mathcal{K}}(Z_F,T_F,\bullet)&=
\frac{1}{4}f^2[g^*_B(df,df)g_F(X_F,Y_F)(Z_F,T_F)\\\notag
&+g^*_B(df\circ J,df)g_F(X_F,JY_F)(Z_F,T_F)\\\notag
&+g^*_B(df,df\circ J)g_F(X_F,Y_F)(Z_F,JT_F)\\\notag
&+g^*_B(df\circ J,df\circ J)g_F(X_F,JY_F)(Z_F,JT_F)]\\\notag
&+f^2\widetilde{\mathcal{K}}_F(X_F,Y_F,\bullet^F)\widetilde{\mathcal{K}}_F(Z_F,T_F,\bullet^F).
\end{align}
We know that (4.10)-(4.18) are smooth and smoothly extend to the region $f=0$, so $(B\times _fF,J)$ is a almost product semi-regular semi-Riemannian manifold.
\end{proof}
\begin{thm} Let $(B,g_B,J_1)$ be a nondegenerate almost product manifold and $(F,g_F,J_2)$ be a semi-regular almost product manifold and $f\in C^{\infty}(B)$. Let the vector fields $X,Y,Z,T\in\Gamma(TB)$ and $U,V,W,Q\in\Gamma(TF)$, then\\
\indent (1) $\widetilde{R}(X,Y,Z,T)=\widetilde{R}_B(X,Y,Z,T)$.\\
\indent  (2) $\widetilde{R}(X,Y,Z,Q)=\widetilde{R}(Z,Q,X,Y)=0.$\\
\indent (3)$\widetilde{R}(X,Y,W,Q)=\widetilde{R}(W,Q,X,Y)=0.$\\
\indent (4) $\widetilde{R}(X,V,W,T)=-\frac{1}{2}fH^f(X,T)g_F(V,W)-\frac{f}{2}[X(J_1T)(f)-g^*_B(\widetilde{\nabla}^\flat_XT,df\circ J_1)]g_F(V,JW).$\\
\indent (5) $\widetilde{R}(U,V,Z,Q)=\frac{1}{4}f Z(f)[-{\mathcal{K}}_F(V,Q,U)+{\mathcal{K}}_F(V,J Q,J U)+{\mathcal{K}}_F(U,Q,V)
-{\mathcal{K}}_F(U,J Q,J V)]$\\
\indent $-\frac{1}{4}f (J Z)(f)[{\mathcal{K}}_F(V,J Q,U)-{\mathcal{K}}_F(V, Q,J U)-{\mathcal{K}}_F(U,J Q,V)
+{\mathcal{K}}_F(U,Q,J V)].$\\
\indent (6)$\widetilde{R}(Z,Q,U,V)=0.$\\
\indent (7)  $\widetilde{R}(U,V,W,Q)=f^2\widetilde{R}_F(U,V,W,Q)+\frac{f^2}{4}\left\{g^*_B(df,df)
[g_F(U,W)g_F(V,Q)-g_F(V,W)g_F(U,Q)]\right.$\\
\indent $+g^*_B(df\circ J_1,df)
[g_F(U,JW)g_F(V,Q)+g_F(U,W)g_F(V,JQ)$\\
\indent $-g_F(V,JW)g_F(U,Q)-g_F(V,W)g_F(U,JQ)]$\\
\indent $\left.
+g^*_B(df\circ J_1,df\circ J_1)
[g_F(U,JW)g_F(V,JQ)-g_F(V,JW)g_F(U,JQ)]\right\}.
$
\end{thm}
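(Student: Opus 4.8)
The plan is to expand each component of $\widetilde{R}$ through the Koszul-form expression (4.9) of Proposition 4.13, and then substitute the warped-product values of $\widetilde{\mathcal{K}}$ recorded in Proposition 4.14 together with the contraction identities (4.10)--(4.18) established in the proof of Theorem 4.16. This is the almost-product analogue of the argument used for Theorem 2.17. Throughout I would exploit two structural facts: that the almost product structure is block-diagonal, $J=(J_1,J_2)$, so $J$ preserves the splitting of $TB$ and $TF$; and that $f$ is pulled back from $B$, so any fiber field $U\in\Gamma(TF)$ annihilates $f$ and hence $U(f^2)=0$. These are what make the type-by-type bookkeeping collapse.

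For the pure-base identity (1), every slot lies in $\Gamma(TB)$, so Proposition 4.14(1) replaces each factor $\widetilde{\mathcal{K}}$ by its base lift $\widetilde{\mathcal{K}}_B$, the derivatives $X(\cdot),Y(\cdot)$ act as on $B$, $[X,Y]\in\Gamma(TB)$, and (4.10) collapses the contraction to the $\bullet^B$ term; reassembling (4.9) yields $\widetilde{R}_B(X,Y,Z,T)$. The vanishing identities (2), (3) and (6) I would obtain from type mixing: whenever an isolated fiber direction is forced against a base contraction, or conversely, Proposition 4.14(2) and the mixed contractions (4.11), (4.12) kill the relevant Koszul factors, while the surviving derivative terms cancel in their antisymmetric $X\leftrightarrow Y$ pairing.

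The mixed identities (4) and (5) are where the twisted terms first appear. Substituting the values (3)--(5) of Proposition 4.14 into (4.9), the second base-derivatives $X(T(f))$ assemble into the Hessian $H^f(X,T)$, and the symmetrized slot $\widetilde{\mathcal{K}}(V,X,W)=\frac12[fg_F(V,W)X(f)+f(J_1X)(f)g_F(V,J_2W)]$ produces the $g_F(V,J_2W)$ contribution carrying $J_1$. For (5) the $Z(f)$- and $(J_1Z)(f)$-weighted fiber-Koszul terms come directly from the contractions (4.13)--(4.17).

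The main obstacle, and the bulk of the labor, is the all-fiber identity (7). Here the derivative block of (4.9) reduces, using $\widetilde{\mathcal{K}}(U,V,W)=f^2\widetilde{\mathcal{K}}_F(U,V,W)$ and $U(f^2)=V(f^2)=0$, to $f^2$ times the fiber derivative block; this recombines with the genuinely-fiber piece $f^2\widetilde{\mathcal{K}}_F\,\widetilde{\mathcal{K}}_F$ of (4.18) into $f^2\widetilde{R}_F(U,V,W,Q)$ via the fiber form of (4.9). The residue is furnished by the four $g^*_B$ scalars in (4.18), applied once to $\widetilde{\mathcal{K}}(U,W,\bullet)\widetilde{\mathcal{K}}(V,Q,\bullet)$ and once to $\widetilde{\mathcal{K}}(V,W,\bullet)\widetilde{\mathcal{K}}(U,Q,\bullet)$. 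The delicate step is to track the $J_1$-twisted contractions $g^*_B(df\circ J_1,df)$ and $g^*_B(df\circ J_1,df\circ J_1)$ against the $J_2$-twisted fiber metrics $g_F(\cdot,J_2\cdot)$ and to match signs across the antisymmetrizations $U\leftrightarrow V$ and $W\leftrightarrow Q$; this is precisely what produces the symmetric three-term bracket displayed in (7).
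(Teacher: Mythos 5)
Your proposal follows essentially the same route as the paper's own proof: the paper likewise expands $\widetilde{R}$ via the Koszul-form identity (4.9) of Proposition 4.13, substitutes the warped-product values of $\widetilde{\mathcal{K}}$ from Proposition 4.14, and invokes the contraction formulas (4.10)--(4.18) established in the proof of Theorem 4.16, carrying out (1), (2), (4) explicitly and dismissing the remaining identities as ``similar.'' Your type-by-type bookkeeping, including the assembly of the Hessian in (4) and the recombination of the $f^2\widetilde{\mathcal{K}}_F\,\widetilde{\mathcal{K}}_F$ block into $f^2\widetilde{R}_F$ in (7), is exactly the computation the paper performs or leaves implicit.
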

\begin{proof}In order to prove these identities, we will use (4.9), Proposition 4.14, (4.10)-(4.18).
 \begin{align}
(1)\widetilde{R}(X,Y,Z,T)&=X(\widetilde{\mathcal{K}}(Y,Z,T))-Y(\widetilde{\mathcal{K}}(X,Z,T))-\widetilde{\mathcal{K}}([X,Y],Z,T)\\\notag
&+\widetilde{\mathcal{K}}(X,Z,\bullet)\widetilde{\mathcal{K}}(Y,T,\bullet)
-\widetilde{\mathcal{K}}(Y,Z,\bullet)\widetilde{\mathcal{K}}(X,T,\bullet)\\\notag
&=X(\widetilde{\mathcal{K}}_B(Y,Z,T))-Y(\widetilde{\mathcal{K}}_B(X,Z,T))-\widetilde{\mathcal{K}}_B([X,Y],Z,T)\\\notag
&+\widetilde{\mathcal{K}}_B(X,Z,\bullet^B)\widetilde{\mathcal{K}})_B(Y,T,\bullet^B)
-\widetilde{\mathcal{K}}_B(Y,Z,\bullet^B)\widetilde{\mathcal{K}}_B(X,T,\bullet^B)\\\notag
&=\widetilde{R}_B(X,Y,Z,T),
\end{align}
where we applied (1) from Proposition 4.14 and (4.10).
\begin{align}
(2)\widetilde{R}(X,Y,Z,Q)&=X(\widetilde{\mathcal{K}}(Y,Z,Q))-Y(\widetilde{\mathcal{K}}(X,Z,Q))-\widetilde{\mathcal{K}}([X,Y],Z,Q)\\\notag
&+\widetilde{\mathcal{K}}(X,Z,\bullet)\widetilde{\mathcal{K}}(Y,Q,\bullet)
-\widetilde{\mathcal{K}}(Y,Z,\bullet)\widetilde{\mathcal{K}}(X,Q,\bullet)\\\notag
&=0,
\end{align}
where we applied (2) from Proposition 4.14 and (4.11).
\begin{align}
(4)\widetilde{R}(X,V,W,T)&=X(\widetilde{\mathcal{K}}(V,W,T))-V(\widetilde{\mathcal{K}}(X,W,T))-\widetilde{\mathcal{K}}([X,V],W,T)\\\notag
&+\widetilde{\mathcal{K}}(X,W,\bullet)\widetilde{\mathcal{K}}(V,T,\bullet)
-\widetilde{\mathcal{K}}(V,W,\bullet)\widetilde{\mathcal{K}}(X,T,\bullet)\\\notag
&=X[-\frac{1}{2}fg_F(V,W)T(f)-\frac{f}{2}(JT)(f)g_F(V,JW)]\\\notag
&+\frac{1}{2}T(f)X(f)g_F(V,W)+\frac{1}{2}(JT)(f)X(f)g_F(V,JW)\\\notag
&+\frac{1}{2}fg^*_B(\widetilde{\nabla}^\flat_XT,df)g_F(V,W)
+\frac{1}{2}fg^*_B(\widetilde{\nabla}^\flat_XT,df\circ J_1)g_F(V,JW)\\\notag
&=-\frac{1}{2}fH^f(X,T)g_F(V,W)-\frac{f}{2}[X(J_1T)(f)-g^*_B(\widetilde{\nabla}^\flat_XT,df\circ J_1)]g_F(V,JW),
\end{align}
where we applied (2) and (4) from Proposition 4.14 and (4.12),(4.14). Similarly we can prove other equalities.
\end{proof}

\section{Degenerate multiply warped products of singular semi-Riemannian manifolds}
\begin{defn}
 Let $(B,g_B)$ and $(F_j,g_{F_J})$ $1\leq j\leq l$ be singular
semi-Riemannian manifolds, and $f_j\in C^{\infty}(B)$ $1\leq j\leq l$ smooth functions. The multiply warped product
of $B$ and $F_j$ with warping function $f_j$, $1\leq j\leq l$ is the semi-Riemannian manifold
\begin{equation}
B\times F_1\times \cdots \times F_l:=(B\times F_1\times \cdots \times F_l,\pi^*_B(g_B)+\sum_{j=1}^l(f_j\circ \pi_B)^2\pi^*_{F_j}(g_{F_j})),
\end{equation}
where $\pi_B: B \times F_1\times \cdots \times F_l \rightarrow B $ and  $\pi_{F_j}: B\times F_1\times \cdots \times F_l \rightarrow F_j $ are the canonical projections.
\end{defn}
\indent The inner product on
$ B \times F_1\times \cdots \times F_l $ takes, for any point $p\in B \times F_1\times \cdots \times F_l $ and for any pair of tangent vectors $x,y\in T_p(B \times F_1\times \cdots \times F_l )$, the explicit form
\begin{equation}
\left<x,y\right>=\left<d\pi_B(x),d\pi_B(y)\right>_B+\sum_{j=1}^lf_j^2(p)\left<d\pi_{F_j}(x),d\pi_{F_j}(y)\right>_{F_j}.
\end{equation}
Similarly to Proposition 3.6 in \cite{St2}, we have
\begin{prop} Let $B \times F_1\times \cdots \times F_l$ be a degenerate multiply warped product and let the vector fields $X,Y,Z\in\Gamma(TB)$ and $U_j,V_j,W_j\in\Gamma(TF_j)$. Then\\
\indent (1) ${\mathcal{K}}(X,Y,Z)={\mathcal{K}}_B(X,Y,Z)$.\\
\indent  (2) ${\mathcal{K}}(X,Y,W_j)={\mathcal{K}}(X,W_j,Y)={\mathcal{K}}(W_j,X,Y)=0.$\\
\indent (3) ${\mathcal{K}}(X,V_j,W_j)={\mathcal{K}}(V_j,X,W_j)=-{\mathcal{K}}(V_j,W_j,X)=f_jg_{F_j}(V_j,W_j)X(f_j).$\\
\indent (4)${\mathcal{K}}(X,V_i,W_j)={\mathcal{K}}(V_i,X,W_j)={\mathcal{K}}(V_i,W_j,X)=0$ for $i\neq j$.\\
\indent (5) ${\mathcal{K}}(U_j,V_j,W_j)=f_j^2{\mathcal{K}}_{F_j}(U_j,V_j,W_j).$\\
\indent (6) ${\mathcal{K}}(U_i,V_j,W_k)=0$ where $i,j,k$ are different. \\
\indent (7) ${\mathcal{K}}(U_i,V_i,W_j)={\mathcal{K}}(U_i,W_j,V_i)={\mathcal{K}}(W_j,U_i,V_i)=0$ for $i\neq j$.\\
\end{prop}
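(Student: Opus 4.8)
The plan is to substitute the multiply warped product metric (5.2) into the defining formula (2.1) for the Koszul form and reduce each of the seven identities to data on the individual factors. Before any case work, I would record three structural facts about lifts of vector fields. First, the inner product (5.2) splits as a $B$-part plus a sum of $F_j$-parts, so that $\langle X,V_j\rangle=0$ whenever $X$ is a base lift and $V_j$ a fiber lift, $\langle V_i,W_j\rangle=0$ whenever $i\neq j$, while $\langle V_j,W_j\rangle=f_j^2\,g_{F_j}(V_j,W_j)$ and $\langle X,Y\rangle=g_B(X,Y)$. Second, lifts coming from different factors commute: $[X,V_j]=0$ for a base lift $X$ and a fiber lift $V_j$, and $[V_i,W_j]=0$ for $i\neq j$, whereas brackets of lifts from a single factor are the lifts of the corresponding factor brackets. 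Third, differentiation respects the product structure: a base vector field annihilates any function pulled back from a fiber, and a fiber-$F_i$ vector field annihilates every warping function $f_j$ (which lives on $B$) as well as every function $g_{F_j}(\cdot,\cdot)$ with $j\neq i$.

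With these facts in hand, I would organize the seven identities into two groups. The identities (1), (2), (3) and (5) involve only the base together with a single fiber $F_j$; here all other fibers contribute nothing to (2.1), so each reduces verbatim to the single warped product $B\times_{f_j}F_j$ and follows from Proposition 3.6 in \cite{St2}. For instance, in (3) every inner product and bracket except $\langle V_j,W_j\rangle=f_j^2\,g_{F_j}(V_j,W_j)$ vanishes, and since $X(f_j^2)=2f_jX(f_j)$ while $X\big(g_{F_j}(V_j,W_j)\big)=0$, one obtains $\mathcal{K}(X,V_j,W_j)=f_jX(f_j)\,g_{F_j}(V_j,W_j)$; the symmetry relations from Theorem 2.2 (or a direct recomputation) give the remaining two equalities. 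Identity (5) likewise factors out $f_j^2$ because a fiber-$F_j$ vector kills $f_j$, leaving $f_j^2\,\mathcal{K}_{F_j}(U_j,V_j,W_j)$.

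The genuinely new content lies in identities (4), (6) and (7), which mix two or three distinct fibers and all evaluate to zero. For each I would show that all six terms in (2.1) vanish term by term: the inner-product terms vanish by the first structural fact (distinct-fiber and mixed base-fiber inner products are zero), and the bracket terms vanish by the second fact (distinct-factor lifts commute, and the surviving brackets pair against a vector from a different factor). I expect the subtle point — and hence the main obstacle — to be the lone surviving-looking derivative term in (7), namely $W_j\langle U_i,V_i\rangle=W_j\big(f_i^2\,g_{F_i}(U_i,V_i)\big)$ with $i\neq j$; this is where I must invoke the third fact twice, once to kill $W_j(f_i^2)$ because $f_i$ depends only on $B$, and once to kill $W_j\big(g_{F_i}(U_i,V_i)\big)$ because that function is pulled back from $F_i$ while $W_j$ is tangent to a different fiber. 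Once this term is seen to vanish, every remaining term in (4), (6) and (7) is manifestly zero, completing the proof.
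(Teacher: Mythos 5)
Your proposal is correct and follows essentially the route the paper intends: the paper offers no written proof beyond ``Similarly to Proposition 3.6 in \cite{St2}'', and your argument is exactly the expansion of that remark --- reducing the base-plus-one-fiber identities (1), (2), (3), (5) to the single warped product case of \cite{St2}, and verifying the mixed-fiber identities (4), (6), (7) term by term from (2.1) using the orthogonality of distinct factors, the vanishing of brackets of lifts from distinct factors, and the fact that a fiber lift annihilates functions pulled back from $B$ or from a different fiber. Your isolation of the term $W_j\langle U_i,V_i\rangle$ in (7) as the only non-obviously-vanishing term is precisely the right point of care, and your treatment of it is correct.
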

By Proposition 5.2, similarly to Theorem 4.1 in \cite{St2}, we can get
\begin{thm}
Let $(B,g_B)$ and $(F_j,g_{F_j})$, $1\leq j\leq l$ be radical-stationary semi-Riemannian manifolds and $f_j\in C^{\infty}(B)$ so that
$df_j\in {\mathcal{A}}^{\bullet}(B)$, Then the multiply warped product manifold $B \times F_1\times \cdots \times F_l$ is a radical-stationary semi-Riemannian manifold.
\end{thm}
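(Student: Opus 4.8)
The plan is to reduce the claim to the pointwise radical-annihilation criterion for the Koszul form. Write $M=B\times F_1\times\cdots\times F_l$. By the definition recalled in Section 2, $M$ is radical-stationary exactly when $\nabla^\flat_XY\in\mathcal{A}^\bullet$ for all $X,Y$, equivalently when $\mathcal{K}(X,Y,W)=0$ for every $X,Y\in\Gamma(TM)$ and every $W$ in the radical $\Gamma_0(TM)$ of the product. Using the Koszul-form properties (Theorem 2.3 in \cite{St2}), $\mathcal{K}$ is additive and $\mathcal{R}$-linear in each slot and $C^\infty$-homogeneous in its first and third slots; moreover, because the third argument is a radical vector $W$ with $\langle Y,W\rangle=0$, the correction term $X(f)\langle Y,W\rangle$ drops out and $\mathcal{K}$ is effectively $C^\infty$-homogeneous in its second slot as well. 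Hence it suffices to verify the vanishing when $X$, $Y$ and $W$ are each lifts of vector fields from a single factor $B$ or $F_j$.

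First I would describe the radical of $M$. At a point $p=(b,x_1,\dots,x_l)$ the degeneracy has two origins: the radical $\Gamma_0(TB)$ lifted from the base, and, for each $j$, the null directions inside $T_{x_j}F_j$. Because the fiber metric is scaled by $f_j^2$ in (5.2), these null fiber directions are exactly $\Gamma_0(TF_j)$ when $f_j(b)\neq0$ and the whole of $T_{x_j}F_j$ when $f_j(b)=0$. So, after the reduction above, it is enough to treat $W$ equal to a base-radical lift $W_B\in\Gamma_0(TB)$ or a fiber lift $W_{F_j}$ of one of these two kinds.

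Then I would run through Proposition 5.2 with the radical vector in the third slot. If the other two arguments are base lifts, the Koszul form equals $\mathcal{K}_B$, which annihilates $\Gamma_0(TB)$ since $B$ is radical-stationary; the purely-fiber term $\mathcal{K}(U_j,V_j,W_{F_j})=f_j^2\mathcal{K}_{F_j}(U_j,V_j,W_{F_j})$ vanishes because $\mathcal{K}_{F_j}$ annihilates $\Gamma_0(TF_j)$ when $f_j(b)\neq0$ and because $f_j^2=0$ when $f_j(b)=0$; and the genuinely mixed expressions in Proposition 5.2(2),(4),(6),(7) are identically zero. The only surviving contributions are those of Proposition 5.2(3), which carry a factor $X(f_j)=df_j(X)$ together with a fiber inner product $g_{F_j}$; here the two hypotheses combine, for if the radical vector occupies the base slot then $df_j\in\mathcal{A}^\bullet(B)$ gives $df_j(W_B)=0$, while if it occupies a fiber slot then $g_{F_j}(V_j,W_{F_j})=0$ when $f_j(b)\neq0$ and the prefactor $f_j$ vanishes when $f_j(b)=0$. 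Collecting the cases yields $\mathcal{K}(X,Y,W)=0$ for all $X,Y$ and all radical $W$, so $\nabla^\flat_XY\in\mathcal{A}^\bullet$ and $M$ is radical-stationary.

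The work is essentially bookkeeping rather than a single hard step: the one genuinely delicate point is to recognize that the fiber radical jumps up to all of $T_{x_j}F_j$ precisely on the zero set of $f_j$, and then to confirm that every term of Proposition 5.2 that is not already zero either reduces to a factor Koszul form evaluated on a factor radical or carries a compensating power of $f_j$ (or a derivative $df_j$) that vanishes on that zero set. This is the multi-factor analogue of Theorem 4.1 in \cite{St2}; the only new feature is the sum over $j$ and the cross-factor terms $\mathcal{K}(U_i,V_j,W_k)$ with distinct indices, all of which vanish by Proposition 5.2(4),(6),(7).
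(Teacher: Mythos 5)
Your proof is correct and follows essentially the same route as the paper: the paper's proof is a one-line reduction to Proposition 5.2 combined with the argument of Theorem 4.1 in \cite{St2}, and what you have written is exactly that argument carried out in detail — the radical-annihilation criterion for membership in $\mathcal{A}^{\bullet}$, the block description of the radical of the product metric (including its jump to all of $T_{x_j}F_j$ on the zero set of $f_j$), and the case-by-case check of Proposition 5.2 using radical-stationarity of $B$ and each $F_j$ together with $df_j\in\mathcal{A}^{\bullet}(B)$. One small caution: your opening sentence phrases the criterion in terms of $\Gamma_0(TM)$ (global radical sections), but the condition $\nabla^\flat_XY\in\mathcal{A}^{\bullet}(M)$ is a pointwise annihilation condition on the radical at each point — which is what the rest of your argument correctly uses.
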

\begin{thm}
Let $(B,g_B)$ be a non-degererate semi-Riemannian manifold and $(F_j,g_{F_j})$, $1\leq j\leq l$ be semi-regular semi-Riemannian manifolds and $f_j\in C^{\infty}(B)$, Then the multiply warped product manifold $B \times F_1\times \cdots \times F_l$ is a semi-regular semi-Riemannian manifold.
\end{thm}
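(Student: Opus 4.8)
The plan is to verify the two defining conditions of Definition 5.?/2.9 for $M:=B\times F_1\times\cdots\times F_l$, namely $\nabla^\flat_XY\in\mathcal{A}^\bullet(M)$ and $\nabla_Z(\nabla^\flat_XY)\in\mathcal{A}^\bullet(M)$ for all $X,Y,Z\in\Gamma(TM)$. The first is immediate: since $(B,g_B)$ is nondegenerate, $\mathcal{A}^\bullet(B)$ consists of \emph{all} smooth $1$-forms, so in particular $df_j\in\mathcal{A}^\bullet(B)$ for every $j$. Hence Theorem 5.3 applies and $M$ is radical-stationary, which is exactly the condition $\nabla^\flat_XY\in\mathcal{A}^\bullet(M)$. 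For the second condition I would use the characterization (the unbarred analogue of Proposition 4.10, i.e. Proposition 6.23 in \cite{St1}): on a radical-stationary manifold, semi-regularity is equivalent to $\mathcal{K}(X,Y,\bullet)\mathcal{K}(Z,T,\bullet)\in C^\infty(M)$ for all $X,Y,Z,T\in\Gamma(TM)$. Thus the whole problem reduces to proving the smoothness of this contraction, and the proof becomes the exact analogue of the computation in Theorem 4.16.

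On the region where all $f_j\neq0$ the covariant contraction splits according to the orthogonal decomposition $TM=TB\oplus\bigoplus_j TF_j$ together with the scaling $g|_{TF_j}=f_j^2g_{F_j}$, giving
\[
\mathcal{K}(X,Y,\bullet)\mathcal{K}(Z,T,\bullet)=\mathcal{K}(X,Y,\bullet^B)\mathcal{K}(Z,T,\bullet^B)+\sum_{j=1}^l\frac{1}{f_j^2}\mathcal{K}(X,Y,\bullet^{F_j})\mathcal{K}(Z,T,\bullet^{F_j}),
\]
where $\bullet^B$ and $\bullet^{F_j}$ denote the contractions on $B$ and on $F_j$. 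By $\mathcal{R}$-multilinearity it suffices to treat the case where each of $X,Y,Z,T$ is a lift of a vector field on $B$ or on a single fiber $F_j$, and then to read off the relevant Koszul values from Proposition 5.2. The key mechanism is that every factor $1/f_j^2$ produced by a fiber contraction is matched by factors $f_j$ carried by the Koszul form in Proposition 5.2, so each surviving term extends smoothly across $\{f_j=0\}$.

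The mixed and distinct-fiber terms are harmless: if two arguments lie in different fibers one has $\mathcal{K}(U_i,V_j,\bullet)=0$ for $i\neq j$, and a base--fiber pair produces a fiber-$i$ component proportional to $f_iX(f_i)$ whose contraction against $1/f_i^2$ gives the manifestly smooth $X(f_i)Z(f_i)g_{F_i}(\cdot,\cdot)$. The decisive case, and the main obstacle, is when all four arguments lie in one fiber $F_i$. Here Proposition 5.2(3),(5) give $\mathcal{K}(U_i,V_i,\bullet^B)=-f_ig_{F_i}(U_i,V_i)\,df_i$ and $\mathcal{K}(U_i,V_i,\bullet^{F_i})=f_i^2\mathcal{K}_{F_i}(U_i,V_i,\bullet^{F_i})$, so the base--base part is proportional to the smooth quantity $f_i^2g^*_B(df_i,df_i)$, while the fiber part equals
\[
\frac{1}{f_i^2}\,f_i^2\mathcal{K}_{F_i}(U_i,V_i,\bullet^{F_i})\,f_i^2\mathcal{K}_{F_i}(W_i,Q_i,\bullet^{F_i})=f_i^2\,\mathcal{K}_{F_i}(U_i,V_i,\bullet^{F_i})\mathcal{K}_{F_i}(W_i,Q_i,\bullet^{F_i}).
\]
Exactly here the hypothesis that $F_i$ is semi-regular is used: it guarantees $\mathcal{K}_{F_i}(U_i,V_i,\bullet^{F_i})\mathcal{K}_{F_i}(W_i,Q_i,\bullet^{F_i})\in C^\infty(F_i)$, and multiplication by the smooth function $f_i^2$ produces a smooth function that extends over $\{f_i=0\}$. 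Collecting all cases shows $\mathcal{K}(X,Y,\bullet)\mathcal{K}(Z,T,\bullet)\in C^\infty(M)$, and the cited characterization then yields that $M$ is semi-regular. The only genuine subtlety is the power counting in this last case, mirroring the estimate behind (4.18); everything else is routine bookkeeping with Proposition 5.2.
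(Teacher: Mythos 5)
Your proposal is correct and follows essentially the same route as the paper: reduce semi-regularity of the radical-stationary product to smoothness of the contraction $\mathcal{K}(X,Y,\bullet)\mathcal{K}(Z,T,\bullet)$ (the paper invokes Proposition 2.10 in \cite{St2}, you invoke the equivalent unbarred criterion), then check all argument configurations via Proposition 5.2, exactly as in the paper's equations (5.3)--(5.16). Your key case --- all four arguments in one fiber $F_i$, where the $1/f_i^2$ from the fiber contraction is absorbed by the $f_i^2$ factors to give $f_i^2 g^*_B(df_i,df_i)$ and $f_i^2\,\mathcal{K}_{F_i}(\cdot,\cdot,\bullet^{F_i})\mathcal{K}_{F_i}(\cdot,\cdot,\bullet^{F_i})$, smooth by semi-regularity of $F_i$ --- is precisely the paper's (5.11).
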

\begin{proof}
By Proposition 2.10 in \cite{St2}, we only show ${\mathcal{K}}(X,Y,\bullet){\mathcal{K}}(Z,T,\bullet)\in C^{\infty}(B \times F_1\times \cdots \times F_l).$ Let $X_B,Y_B,Z_B,T_B\in\Gamma(TB)$, $X_{F_j},Y_{F_j},Z_{F_j},T_{F_j}\in\Gamma(T{F_j})$.  We assume that $f_j>0$. Then by Proposition 5.2, we have
\begin{align}
{\mathcal{K}}(X_B,Y_B,\bullet){\mathcal{K}}(Z_B,T_B,\bullet)={\mathcal{K}}_B(X_B,Y_B,\bullet^B){\mathcal{K}}_B(Z_B,T_B,\bullet^B)\in C^{\infty}(B).
\end{align}
\begin{align}
{\mathcal{K}}(X_B,Y_B,\bullet){\mathcal{K}}(Z_{F_j},T_B,\bullet)=
{\mathcal{K}}(X_B,Y_B,\bullet){\mathcal{K}}(T_B,Z_{F_j},\bullet)=0,
\end{align}
\begin{align}
{\mathcal{K}}(X_B,Y_B,\bullet){\mathcal{K}}(Z_{F_j},T_{F_j},\bullet)&=
-f_j(\nabla^B_{X_B}Y_B)(f_j)g_{F_j}(Z_{F_j},T_{F_j}).
\end{align}
\begin{align}
{\mathcal{K}}(X_B,Y_{F_j},\bullet){\mathcal{K}}(Z_B,T_{F_j},\bullet)&={\mathcal{K}}(X_B,Y_{F_j},\bullet){\mathcal{K}}(T_{F_j},Z_B,\bullet)\\\notag
&=X_B(f_j)Z_B(f_j)g_{F_j}(T_{F_j},Y_{F_j}).
\end{align}
\begin{align}
{\mathcal{K}}(X_B,Y_{B},\bullet){\mathcal{K}}(Z_{F_i},T_{F_j},\bullet)={\mathcal{K}}(X_B,Y_{F_j},\bullet){\mathcal{K}}(Z_{B},T_{F_i},\bullet)=0,
\end{align}
\indent \indent \indent where $i\neq j$.
\begin{align}
{\mathcal{K}}(X_B,Y_{F_j},\bullet){\mathcal{K}}(Z_{F_j},T_{F_j},\bullet)=f_jX_B(f_j){\mathcal{K}}_{F_j}(Z_{F_j},T_{F_j},Y_{F_j}).
\end{align}
\begin{align}
{\mathcal{K}}(X_B,Y_{F_i},\bullet){\mathcal{K}}(Z_{F_j},T_{F_j},\bullet)={\mathcal{K}}(X_B,Y_{F_j},\bullet){\mathcal{K}}(Z_{F_i},T_{F_j},\bullet)
=0,
\end{align}
\indent \indent \indent where $i\neq j$.
\begin{align}
{\mathcal{K}}(X_B,Y_{F_j},\bullet){\mathcal{K}}(Z_{F_k},T_{F_\alpha},\bullet)
=0,
\end{align}
\indent \indent \indent where $j,k,\alpha$ are different.
\begin{align}
{\mathcal{K}}(X_{F_j},Y_{F_j},\bullet){\mathcal{K}}(Z_{F_j},T_{F_j},\bullet)&=
f_j^2g^*_B(df_j,df_j)g_{F_j}(X_{F_j},Y_{F_j})g_{F_j}(Z_{F_j},T_{F_j})\\\notag
&+f_j^2{\mathcal{K}}_{F_j}(X_{F_j},Y_{F_j},\bullet^{F_j}){\mathcal{K}}_{F_j}(Z_{F_j},T_{F_j},\bullet^{F_j}).
\end{align}
\begin{align}
{\mathcal{K}}(X_{F_i},Y_{F_j},\bullet){\mathcal{K}}(Z_{F_j},T_{F_j},\bullet)=0,
\end{align}
\indent \indent \indent where $i\neq j$.
\begin{align}
{\mathcal{K}}(X_{F_i},Y_{F_i},\bullet){\mathcal{K}}(Z_{F_j},T_{F_j},\bullet)&=
f_if_jg^*_B(df_i,df_j)g_{F_i}(X_{F_i},Y_{F_i})g_{F_j}(Z_{F_j},T_{F_j}),
\end{align}
\indent \indent \indent where $i\neq j$.

\begin{align}
{\mathcal{K}}(X_{F_i},Y_{F_j},\bullet){\mathcal{K}}(Z_{F_i},T_{F_j},\bullet)=0,
\end{align}
\indent \indent \indent where $i\neq j$.

\begin{align}
{\mathcal{K}}(X_{F_i},Y_{F_i},\bullet){\mathcal{K}}(Z_{F_j},T_{F_k},\bullet)=0,
\end{align}
\indent \indent \indent where $i,j,k$ are different.

\begin{align}
{\mathcal{K}}(X_{F_i},Y_{F_j},\bullet){\mathcal{K}}(Z_{F_k},T_{F_\alpha},\bullet)=0,
\end{align}
\indent \indent \indent where $i,j,k,\alpha$ are different.\\
\indent We know that (5.3)-(5.16) are smooth and smoothly extend to the region $f_j=0$, so $B \times F_1\times \cdots \times F_l$ is a semi-regular semi-Riemannian manifold.
\end{proof}
Using Proposition 2.13 in \cite{St2} and Proposition 5.2 and (5.3)-(5.16) and similarly to Theorem 5.2 in \cite{St2}, we have

\begin{thm} Let $(B,g_B)$ be a non-degererate semi-Riemannian manifold and $(F_j,g_{F_j})$, $1\leq j\leq l$ be semi-regular semi-Riemannian manifolds and $f_j\in C^{\infty}(B)$. Let $X_B,Y_B,Z_B,T_B\in\Gamma(TB)$, $X_{F_j},Y_{F_j},Z_{F_j},T_{F_j}\in\Gamma(T{F_j})$. Then\\
\indent (1) ${R}(X,Y,Z,T)={R}_B(X,Y,Z,T)$.\\
\indent  (2) ${R}(X,Y,Z,Q_j)={R}(X,Y,W_j,Q_j)=0.$\\
\indent (3) ${R}(X,V_j,W_j,T)=-f_jH^{f_j}(X,T)g_{F_j}(V_j,W_j).$\\
\indent (4) ${R}(X,Y,W_j,Q_k)=R(X,V_j,W_k,T)=R(X,V_j,W_k,Q_j)=R(X,W_k,V_j,Q_j)=0,$ for $j\neq k$.\\
\indent (5) $R(U_j,V_j,Z,Q_j)=0.$\\
\indent (6) $R(X,W_k,V_j,Q_\alpha)=0$, where $k,j,\alpha$ are different.\\
\indent (7)  ${R}(U_j,V_j,W_j,Q_j)=f_j^2{R}_{F_j}(U_j,V_j,W_j,Q_j)+f_j^2g^*_B(df_j,df_j)[g_{F_j}(U_j,W_j)g_{F_j}(V_j,Q_j)
-g_{F_j}(V_j,W_j)g_{F_j}(U_j,Q_j)$.\\
\indent (8) ${R}(U_k,V_j,W_j,Q_j)={R}(U_k,V_k,W_j,Q_j)=0,$ for $j\neq k$.\\
\indent (9) ${R}(U_k,V_j,W_k,Q_j)=f_kf_jg^*_B(df_k,df_j)g_{F_k}(U_k,W_k)g_{F_j}(V_j,Q_j),$ for $j\neq k$.\\
\indent (10) ${R}(U_j,V_j,W_k,Q_\alpha)={R}(U_j,V_k,W_j,Q_\alpha)=0$, where $k,j,\alpha$ are different.\\
\indent (11) ${R}(U_j,V_k,W_\alpha,Q_\beta)=0$, where $k,j,\alpha,\beta$ are different.\\
\end{thm}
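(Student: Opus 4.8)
The plan is to follow the proof of Theorem 5.2 in \cite{St2} essentially verbatim, the only new ingredient being the extra fiber indices. The engine is the Koszul-form expansion of the Riemann curvature supplied by Proposition 2.13 in \cite{St2},
\begin{align*}
R(X,Y,Z,T)&=X(\mathcal{K}(Y,Z,T))-Y(\mathcal{K}(X,Z,T))-\mathcal{K}([X,Y],Z,T)\\
&\quad+\mathcal{K}(X,Z,\bullet)\mathcal{K}(Y,T,\bullet)-\mathcal{K}(Y,Z,\bullet)\mathcal{K}(X,T,\bullet).
\end{align*}
Into this I substitute each admissible combination of base fields $X,Y,Z,T\in\Gamma(TB)$ and fiber fields $U_j,V_j,W_j,Q_j\in\Gamma(TF_j)$, evaluating every plain Koszul form by Proposition 5.2 and every contracted product $\mathcal{K}(\cdot,\cdot,\bullet)\mathcal{K}(\cdot,\cdot,\bullet)$ by the formulas (5.3)--(5.16) already assembled in the proof of Theorem 5.4. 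The skew-symmetries $R(X,Y,Z,T)=-R(Y,X,Z,T)=-R(X,Y,T,Z)$ trim the number of argument patterns that actually have to be evaluated.

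The eleven cases separate according to how many distinct fiber indices occur. When at most one fiber $F_j$ is involved --- cases (1),(2),(3),(5),(7) --- the computation collapses to the single warped product of \cite{St2}: (1) gives $R_B$, (3) gives the Hessian term $-f_jH^{f_j}(X,T)g_{F_j}(V_j,W_j)$, and (7) gives $f_j^2R_{F_j}$ together with its $g^*_B(df_j,df_j)$ correction, while (2) and (5) vanish. When two or more \emph{different} fibers meet --- the strictly cross-fiber cases (4),(6),(8),(10),(11) --- every term of the expansion is zero: the single Koszul forms vanish by parts (4),(6),(7) of Proposition 5.2, and the relevant contractions vanish by (5.4),(5.7),(5.9),(5.10),(5.12),(5.14),(5.15),(5.16), all ultimately because distinct fibers are $g$-orthogonal and the derivative terms land in the wrong factor. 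Each of these I expect to dispose of termwise.

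The one genuinely new contribution, and the step carrying the real content, is case (9): for $j\neq k$,
\[
R(U_k,V_j,W_k,Q_j)=f_kf_j\,g^*_B(df_k,df_j)\,g_{F_k}(U_k,W_k)\,g_{F_j}(V_j,Q_j).
\]
Here the first three terms of the expansion vanish --- $\mathcal{K}(V_j,W_k,Q_j)$ and $\mathcal{K}(U_k,W_k,Q_j)$ are zero by part (7) of Proposition 5.2, and $[U_k,V_j]=0$ since the fields live on different factors --- so the whole value comes from the contraction terms. The surviving one is $\mathcal{K}(U_k,W_k,\bullet)\mathcal{K}(V_j,Q_j,\bullet)$, equal by (5.13) to $f_kf_j\,g^*_B(df_k,df_j)\,g_{F_k}(U_k,W_k)\,g_{F_j}(V_j,Q_j)$, whereas the partner $\mathcal{K}(V_j,W_k,\bullet)\mathcal{K}(U_k,Q_j,\bullet)$ is zero because $\mathcal{K}(V_j,W_k,\bullet)\equiv 0$ by the cross-fiber parts (4),(6),(7) of Proposition 5.2. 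This coupling of two distinct fibers through the base inner product $g^*_B(df_k,df_j)$ of the two warping-function gradients has no analog in the single warped product of \cite{St2}; the main care will be to keep the fiber indices straight so as to confirm that it is (5.13), and only (5.13), that contributes. Smoothness and smooth extension across the loci $f_j=0$ are already guaranteed by Theorem 5.4, which completes the argument.
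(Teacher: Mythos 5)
Your proposal follows exactly the paper's route: the paper proves this theorem by invoking the curvature expansion of Proposition 2.13 in \cite{St2}, the multiply warped Koszul forms of Proposition 5.2, and the contraction formulas (5.3)--(5.16) from the proof of Theorem 5.4, which are precisely the ingredients you use, and your treatment of the genuinely new cross-fiber case (9) via (5.13) is correct. No gaps; your write-up is in fact more explicit than the paper's own one-line proof.
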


\section{Acknowledgements}

The author was supported in part by  NSFC No.11771070. The author thanks the referee for his (or her) careful reading and helpful comments.

\vskip 1 true cm


\bigskip
\bigskip

\noindent {\footnotesize {\it Y. Wang} \\
{School of Mathematics and Statistics, Northeast Normal University, Changchun 130024, China}\\
{Email: wangy581@nenu.edu.cn}

\end{document}